\titleformat{\subsection}[runin]
{\bfseries} {\thesubsection{.}}{0.15cm}{}[.]
\titleformat{\subsubsection}[runin]
{\em}{\thesubsubsection{.}}{0.15cm}{}[.]
\newtheorem{theorem}{Theorem}[section]
\newtheorem{proposition}[theorem]{Proposition}
\newtheorem{lemma}[theorem]{Lemma}
\newtheorem{corollary}[theorem]{Corollary}
\theoremstyle{definition}
\newtheorem{definition}[theorem]{Definition}
\newtheorem{remark}[theorem]{Remark}
\newtheorem{question}[theorem]{Question}
\newtheorem{example}[theorem]{Example}
\numberwithin{equation}{section}
\numberwithin{figure}{section}
\def\Fcal{\mathcal{F}}
\def\Gcal{\mathcal{G}}
\def\Lcal{\mathcal{L}}
\def\Pcal{\mathcal{P}}
\def\Qcal{\mathcal{Q}}
\def\Tcal{\mathcal{T}}
\def\be{\mathbf{e}}
\newcommand\E{\mathrm{e}}
\def\Cscr{\mathscr{C}}
\def\Oscr{\mathscr{O}}
\def\c{\mathbb{C}}
\def\d{\mathbb{D}}
\def\cd{\overline{\mathbb{D}}}
\def\r{\mathbb{R}}
\def\n{\mathbb{N}}
\def\t{\mathbb{T}}
\def\g{\mathbb{G}}
\def\Agot{\mathfrak{A}}
\def\Ngot{\mathfrak{N}}
\def\Mgot{\mathfrak{M}}
\def\igot{\mathfrak{i}}
\def\M{\mathbf{M}}
\def\dist{\mathrm{dist}}
\def\span{\mathrm{span}}
\def\reg{\mathrm{reg}}
\def\sing{\mathrm{sing}}
\def\supp{\mathrm{supp}}
\def\Psh{\mathrm{Psh}}
\def\NPsh{\mathrm{\mathfrak{N}Psh}}
\def\MPsh{\mathrm{\mathfrak{M}Psh}}
\def\Hess{\mathrm{Hess}\,}
\def\tr{\mathrm{tr}}
\def\Co{\mathrm{Co}}
\newcommand\wh{\widehat}
\newcommand\di{\partial}
\newcommand\dibar{\overline\partial}
\begin{document}

\fancyhead[LO]{Minimal hulls of compact sets in $\r^3$}
\fancyhead[RE]{B.\ Drinovec Drnov\v sek \& F.\ Forstneri\v c}
\fancyhead[RO,LE]{\thepage}

\thispagestyle{empty}


\vspace*{1cm}
\begin{center}
{\bf\LARGE Minimal hulls of compact sets in $\r^3$}

\vspace*{0.5cm}

{\large\bf Barbara Drinovec Drnov\v sek \& Franc Forstneri\v c }
\end{center}

\vspace*{1cm}

\begin{quote}
{\small
\noindent {\bf Abstract}\hspace*{0.1cm}   
The main result of this paper is a characterization of the minimal surface hull 
of a compact set $K$ in $\r^3$ by  sequences of conformal minimal discs
whose boundaries converge to $K$ in the measure theoretic sense,
and also by $2$-dimensional  minimal currents which are limits of Green currents supported
by conformal minimal discs. Analogous results are obtained 
for the null hull of a compact subset of $\c^3$. 
We also prove a null hull analogue of the  Alexander-Stolzenberg-Wermer theorem 
on  polynomial hulls of compact sets of finite linear measure, 
and a polynomial hull version of classical Bochner's tube theorem.

\vspace*{0.1cm}

\noindent{\bf Keywords}\hspace*{0.1cm} minimal surfaces, minimal hulls, holomorphic null  curves, 
null hulls, plurisubharmonic functions,  polynomial hulls, positive currents.

\vspace*{0.1cm}

\noindent{\bf MSC (2010)} 
Primary: 53A10, 32U05;  Secondary: 32C30, 32E20, 49Q05, 49Q15.}
\end{quote}



\section{The role of hulls in complex analysis and minimal surface theory}
\label{sec:intro}

When discussing hulls in various geometries, one typically deals with dual sets of objects.
Given a set  $\Pcal$ of real functions on a manifold $X$, the $\Pcal$-hull of a compact subset 
$K\subset X$ is 
\begin{equation}\label{eq:Phull}
	\widehat K_\Pcal= \{x\in X: f(x) \le \sup_K f \quad\forall f\in \Pcal\}.
\end{equation}
Suppose that $\Gcal$ is a class of geometric objects in $X$ (for example, submanifolds or subvarieties) 
such that the restriction $f|_C$ satisfies the maximum principle for every $f\in \Pcal$ and $C\in \Gcal$.
Then  $C\subset \wh K_\Pcal$ for every $C\in \Gcal$ with boundary $bC \subset K$, and the main question  is how closely is the hull  described  by such objects. 
A basic example is the convex hull $\Co(K)$ of a compact set in 
an affine space $X\cong \r^n$; here $\Pcal$ is the class of all affine linear functions 
on $X$ and $\Gcal$ is the collection of straight line segments in $X$. 

Our principal aim is to introduce and study a suitable notion of the
{\em minimal hull}, $\wh K_\Mgot$, of a compact set $K$ in $\r^3$. 
The idea is that $\widehat K_\Mgot$ should contain every compact
$2$-dimensional minimal surface $M\subset \r^3$ 
with boundary $bM$ contained in $K$ and hopefully not much more. Any such minimal surface is a solution of the 
{\em Plateau problem with free boundary} in $K$; for a closed Jordan curve $K$ 
we have the classical Plateau problem (see e.g.\ \cite{Fomenko,Dierkes2010}).
We define $\wh K_\Mgot$ by using the class of {\em minimal plurisubharmonic functions}
(Definition \ref{def:minimalpsh}). Minimal hull coincides with the {\em $2$-convex hull} of 
Harvey and Lawson \cite[Definition 3.1, p.\ 157]{Harvey-Lawson2013}.
We obtain three characterizations of the minimal hull: 
by sequences of conformal minimal discs (Corollary \ref{cor:minullhull}), 
by minimal Jensen measures (Corollary \ref{cor:minimalJensen}), and 
by Green currents (Theorem \ref{th:minimalcurrents} and Corollaries \ref{cor:currentnullhull}
and \ref{cor:hessian}).  The only reason for restricting to $\r^3$ is that
the main technical tool (see Lemma \ref{prop:RHnull})  is currently only available 
in dimension $3$. 

Harvey and Lawson studied minimal current hull in an arbitrary Riemannian manifold $(X,g)$
and showed that it is contained in the hull defined by 
$p$-plurisubharmonic functions for the appropriate value of $p$ with 
$2\le p<\dim X$  \cite[Sec.\ 4]{Harvey-Lawson2013} (see Remark 
\ref{rem:Mhull} below).  Our results show that these hulls coincide in $\r^3$,
but it is not clear whether they coincide in more general manifolds.

By way of motivation we recall the classical case  of the  {\em polynomial hull}, 
$\widehat K$, of a compact set $K$ in a complex Euclidean space $\c^n$.
This is the hull (\ref{eq:Phull}) with respect to the family $\Pcal=\{|f|: f\in \Oscr(\c^n)\}$, 
where $\Oscr(\c^n)$ is the algebra of all holomorphic functions on $\c^n$. 
The same hull is obtained by using the bigger class $\Psh(\c^n)$ of all plurisubharmonic functions 
(see Stout \cite[Theorem 1.3.11, p.\ 27]{Stout-convexity}). 
A natural dual class $\Gcal$ consists of complex curves.
The question to what extent is $\widehat K$ described by bounded complex curves 
in $\c^n$ with boundaries in $K$ has been an important driving force in the 
development of complex analysis.   Wermer  \cite{Wermer1958} and
Stolzenberg \cite{Stolzenberg1966} proved that, 
if $K$ is a union of finitely many compact smooth curves in $\c^n$, 
then $A=\widehat K\setminus K$ is a (possibly empty) one dimensional 
closed complex subvariety of $\c^n\setminus K$. 
Alexander extended this to compact sets of finite linear measure  \cite{Alexander1971}. 
Positive results are also known for certain embedded $2$-spheres in $\c^2$ \cite{Bedford-K}
and totally real tori in $\c^2$ \cite{Forstneric1988},
among others; a survey can be found in  \cite{Stout-convexity}.  
However, Stolzenberg's example \cite{Stolzenberg1963} shows that one must 
in general relax the requirement that the boundaries of curves lie {\em exactly} in $K$. 
The right notion was found by Poletsky who characterized the polynomial hull by bounded sequences 
of holomorphic discs with boundaries converging  to $K$ in 
measure theoretic sense; here is the precise result.

%
%
%
%
\begin{theorem}[\cite{Poletsky1991,Poletsky1993}]  \label{th:Poletsky}
Let $K$ be a compact set in $\c^n$, and let $B\subset \c^n$ be a ball containing $K$. 
A point $p\in B$ belongs to the polynomial hull 
$\widehat K$ if and only if there exists a sequence of holomorphic discs 
$f_j\colon \overline \d \to B$ satisfying the following for every $j=1,2,\ldots$:
\begin{equation}\label{eq:discs}
	f_j(0)=p \ \ \text{and} \ \ \big| \{t\in [0,2\pi]: \dist(f(e^{\imath t}),K)<1/j\}  \big| \ge 2\pi -1/j.
\end{equation}
Here $|I|$ denotes the Lebesgue measure of a set $I\subset \r$.
\end{theorem}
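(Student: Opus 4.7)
My plan is to handle the two directions of the equivalence separately, with the necessity direction relying on Poletsky's disc envelope formula as the main technical tool.

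For the sufficiency (existence of such discs implies $p\in\widehat K$), I would argue by subaveraging. Let $u$ be any plurisubharmonic function on a neighbourhood of $\overline B$; the cited theorem of Stout reduces testing membership in $\widehat K$ to this class. The submean inequality applied to $u\circ f_j$ gives
$$u(p)=u(f_j(0))\le \frac{1}{2\pi}\int_0^{2\pi} u(f_j(e^{\imath t}))\,dt.$$
I would split the integral over $I_j=\{t:\dist(f_j(e^{\imath t}),K)<1/j\}$ and its complement: the complement has measure at most $1/j$, contributing at most $\|u\|_{\overline B}/(2\pi j)\to 0$, while on $I_j$ the integrand is bounded by $M_j:=\sup\{u(z):\dist(z,K)<1/j\}$, which upper semicontinuity of $u$ and compactness of $K$ force to converge to $\sup_K u$. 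Passing to the limit yields $u(p)\le\sup_K u$, hence $p\in\widehat K$.

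For the nontrivial direction, I would assume $p\in\widehat K$ and fix $j\in\n$. Pick $\epsilon\in(0,1/j)$ and an upper semicontinuous function $\phi\colon\overline B\to[0,1]$ with $\phi=0$ on $\{\dist(\cdot,K)<\epsilon/2\}$ and $\phi=1$ on $\{\dist(\cdot,K)\ge\epsilon\}$. The central object is the \emph{disc envelope}
$$P\phi(z)=\inf\left\{\frac{1}{2\pi}\int_0^{2\pi}\phi\bigl(f(e^{\imath t})\bigr)\,dt \,:\, f\in\Oscr(\cd,B),\ f(0)=z\right\}.$$
Granting, as the key input, Poletsky's theorem that $P\phi$ is plurisubharmonic on $B$, I observe that $P\phi\le\phi$ and $\phi\equiv 0$ on $K$ imply $\sup_K P\phi\le 0$, so the assumption $p\in\widehat K$ forces $P\phi(p)\le 0$. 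Hence for arbitrary $\delta>0$ there exists $f\colon\cd\to B$ with $f(0)=p$ and $\int_0^{2\pi}\phi(f(e^{\imath t}))\,dt<\delta$. Since $\phi\ge\mathbf 1_{\{\dist(\cdot,K)\ge\epsilon\}}$, Chebyshev's inequality gives $|\{t:\dist(f(e^{\imath t}),K)\ge\epsilon\}|<\delta$, and the choice $\epsilon=\delta=1/j$ yields the required disc $f_j$ satisfying (\ref{eq:discs}).

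The hard part is the plurisubharmonicity of $P\phi$, which is the technical heart of the argument. I would first establish upper semicontinuity of $P\phi$ by perturbing a near-optimal disc through a given point $z_0$ into a disc through a neighbouring point $z$, using small holomorphic attachments that leave the boundary behaviour of $\phi\circ f$ essentially unchanged. I would then tackle the subaveraging inequality along a test disc $g\colon\cd\to B$: for each $\delta>0$ I need a single holomorphic disc $F\colon\cd\to B$ with $F(0)=g(0)$ whose boundary integral of $\phi$ is at most $\frac{1}{2\pi}\int_0^{2\pi} P\phi(g(e^{\imath t}))\,dt+\delta$. My strategy would be to select a measurable family $\{h_\zeta\}_{\zeta\in b\d}$ of nearly extremal discs with $h_\zeta(0)=g(\zeta)$ and glue them onto $g$ via Blaschke-product parametrizations that distribute the boundary of the new disc onto short arcs centred at the various $\zeta$. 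Verifying holomorphy of the resulting composite $F$ together with the boundary control on $\int\phi\circ F$ requires delicate estimates, and it is precisely this glueing step — handled in Poletsky's original papers and simplified in subsequent work of Bu--Schachermayer, L\'arusson--Sigurdsson, and Rosay — that constitutes the real obstacle.
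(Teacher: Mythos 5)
The paper cites Theorem~\ref{th:Poletsky} without reproving it, but your argument is exactly the standard deduction from Poletsky's disc envelope formula, and it parallels precisely the proof the paper gives of the null-disc analogue, Corollary~\ref{cor:DD}, from Theorem~\ref{th:nullpshminorant}: the same subaveraging argument for sufficiency, and the same construction of a cut-off upper semicontinuous test function whose disc envelope vanishes on the hull for necessity. The only point you pass over is that $P\phi$ is plurisubharmonic only on $B$, not on $\c^n$, so to conclude $P\phi(p)\le\sup_K P\phi$ from $p\in\widehat K$ you also need that the ball $B$ is Runge so that the hull can be relativized; this is standard and corresponds to Lemma~\ref{lem:Nhull-Runge} and Proposition~\ref{lem:smoothing}(iv) in the paper's null setting.
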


The fact that Poletsky's theorem also gives a simple proof of the following result
of Duval and Sibony \cite{Duval-Sibony1995} was explained by Wold in \cite{Wold2011}.

%
%
%
%
\begin{theorem}[\cite{Duval-Sibony1995,Wold2011}] \label{th:DS} 
Let $K$ be a compact set in $\c^n$. A point $p\in \c^n$ belongs to  the polynomial 
hull $\widehat K$ of $K$ if and only
if there exists a positive current $T$ on $\c^n$ of bidimension $(1,1)$
with compact support  such that $dd^c T = \mu - \delta_p$, 
where $\mu$ is a representative Jensen measure for (evaluation at) $p$.
\end{theorem}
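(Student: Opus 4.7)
The plan is to derive the theorem from Poletsky's theorem (Theorem \ref{th:Poletsky}) via a Green current construction, essentially following \cite{Wold2011}. The easy direction ($\Leftarrow$) is formal: assuming $T$ and $\mu$ (with $\supp\mu\subset K$) exist, positivity of $T$ and of $dd^c\phi$ for $\phi\in\Psh(\c^n)$ gives
\[
	\int \phi\, d\mu - \phi(p)
	= \langle \mu - \delta_p,\, \phi\rangle
	= \langle dd^c T,\, \phi \rangle
	= \langle T,\, dd^c \phi \rangle \;\ge\; 0,
\]
so $\phi(p)\le \int\phi\,d\mu\le \sup_K\phi$; since the polynomial hull coincides with the $\Psh(\c^n)$-hull (recalled in the introduction), we conclude $p\in\widehat K$.

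For the main direction ($\Rightarrow$), fix a ball $B\subset\c^n$ of radius $R$ containing $K$ and apply Poletsky's theorem at $p\in\widehat K$ to obtain holomorphic discs $f_j\colon \overline\d\to B$ with $f_j(0)=p$ satisfying (\ref{eq:discs}). For each $j$ define the positive bidimension $(1,1)$ current
\[
	T_j(\alpha) \;=\; -\int_\d \log|z|\, f_j^* \alpha.
\]
Integration by parts on $\d$ together with $dd^c\log|z|=\delta_0$ and the vanishing of $\log|z|$ on $\partial\d$ yields $dd^c T_j=\mu_j-\delta_p$, where $\mu_j:=(f_j)_\ast(d\theta/(2\pi))$ is the push-forward of normalized arc length on $\partial\d$. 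By the subharmonicity of $\phi\circ f_j$ for $\phi\in\Psh(\c^n)$, each $\mu_j$ is a Jensen measure for $p$, and the boundary condition (\ref{eq:discs}) forces any weak-$\ast$ subsequential limit $\mu$ of $\{\mu_j\}$ to be supported in $K$ and to remain a Jensen measure for $p$.

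The principal obstacle is a uniform mass bound on $\{T_j\}$, needed to extract a subsequential weak limit of the currents themselves. Since $T_j$ is positive, its mass is controlled by its evaluation on the Euclidean Kähler form $\omega$ on $\c^n$. Writing $\rho(z)=\|z\|^2$ so that $dd^c\rho=c\omega$ for a dimensional constant $c$, the same integration by parts gives
\[
	T_j(\omega) \;=\; c^{-1}\Bigl(\int_{\partial\d} \rho\circ f_j\,\frac{d\theta}{2\pi} \;-\; \rho(p)\Bigr),
\]
which is nonnegative by the subharmonicity of $\rho\circ f_j$ and bounded above by $c^{-1}R^2$ since $f_j(\overline\d)\subset B$. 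Hence $\{T_j\}$ has uniformly bounded mass and common compact support, so by weak compactness of positive currents a subsequence $T_{j_k}$ converges weakly to a positive $(1,1)$-current $T$ with compact support. Choosing this subsequence so that $\mu_{j_k}\to\mu$ as well and passing to the limit in $dd^c T_{j_k}=\mu_{j_k}-\delta_p$ gives $dd^c T=\mu-\delta_p$, completing the proof.
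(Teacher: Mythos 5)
Your proof is correct and takes exactly the route the paper attributes to Wold \cite{Wold2011}: the paper itself does not reprove Theorem \ref{th:DS} but uses precisely this Green-current/Poletsky-disc argument in its proofs of the analogous Theorems \ref{th:nullcurrents} and \ref{th:minimalcurrents}, including the same mass estimate (Lemma \ref{lem:mass}) you derive ad hoc by pairing with $dd^c\rho$. The only blemish is a normalization slip: with the paper's convention $dd^c\log|z|=2\pi\delta_0$, your $T_j$ should carry a $\tfrac{1}{2\pi}$ factor (as in (\ref{eq:Greencurrent})), otherwise $dd^c T_j=2\pi(\mu_j-\delta_p)$; this is cosmetic and does not affect the argument.
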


The characterization  in Theorem \ref{th:DS}  was extended by Harvey and Lawson \cite{Harvey-Lawson20091}  
to hulls in calibrated geometries. Results in this paper are not of this type.

We obtain analogous  characterizations of minimal hulls of compact sets in $\r^3$ 
and null hulls of compact sets in $\c^3$.  A suitable class  of functions to define the minimal hull 
is  the following (see Definition \ref{def:minimalpsh}):

{\em An upper semicontinuous function $u\colon \omega\to\r\cup\{-\infty\}$ 
on a domain $\omega\subset\r^n$ is said to be
{\em minimal plurisubharmonic} if the restriction of $u$ to any affine $2$-dimensional 
plane $L\subset \r^n$ is subharmonic on $L\cap \omega$ 
(in any isothermal coordinates on $L$).}

The set of all such functions is denoted by $\MPsh(\omega)$.
For every $u\in \MPsh(\omega)$ and every conformal minimal
disc $f\colon \d\to\omega$ the composition $u\circ f$ is a subharmonic 
function on $\d$ (cf.\ Lemma \ref{lem:characterizationMS}), so
minimal surfaces form a class of objects which is dual to the class 
of minimal plurisubharmonic functions.  A $\Cscr^2$ function $u$ 
is minimal plurisubharmonic if and only if the sum of the two smallest eigenvalues of its Hessian
is nonnegative at every point; hence $\Cscr^2$ minimal plurisubharmonic functions are exactly 
{\em $2$-plurisubharmonic functions} studied by Harvey and Lawson 
\cite[Definition 2.2, p.\ 153]{Harvey-Lawson2013}.
We adopt a more suggestive terminology to emphasize their relationship with minimal surfaces.
We define the minimal hull, $\widehat K_\Mgot$, of a compact set $K\subset \r^n$
as the hull  (\ref{eq:Phull}) with respect to the family $\Pcal = \MPsh(\r^n)$; see Definition \ref{def:minimalhull}.
This notion coincides with the {\em $2$-convex hull} of Harvey and Lawson 
\cite[Definition 3.1, p.\ 157]{Harvey-Lawson2013}.
In analogy with Theorem \ref{th:Poletsky} we characterize  
the minimal hull of a compact set $K\subset \r^3$ by sequences of conformal minimal discs whose
boundaries converge to $K$ in the measure theoretic sense.

%
%
%
%
\begin{theorem}[Corollary \ref{cor:minullhull}]\label{th:minullhull}
Let $K$ be a compact set in $\r^3$, and let  $\omega\Subset \r^3$ be a bounded open convex set 
containing $K$. A point $p\in \omega$ belongs to the minimal hull $\widehat K_\Mgot$ of $K$
if and only if there exists a sequence of conformal minimal discs 
$f_j\colon \overline \d\to \omega$ such that for all $j=1,2,\ldots$ we have $f_j(0)=p$ and
\[
	\big| \{t\in [0,2\pi]: \dist(f_j(e^{\imath t}),K)<1/j\}  \big| \ge 2\pi -1/j.
\] 
\end{theorem}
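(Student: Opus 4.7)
The plan is to mirror Poletsky's proof of Theorem~\ref{th:Poletsky}, with conformal minimal discs into $\omega\subset\r^3$ playing the role of holomorphic discs.

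\emph{The ``if'' direction} is immediate. Given $u\in\MPsh(\r^3)$, Lemma~\ref{lem:characterizationMS} says $u\circ f_j$ is subharmonic on $\d$, so
\[
u(p)=u(f_j(0))\le\frac{1}{2\pi}\int_0^{2\pi}u(f_j(e^{\imath t}))\,dt.
\]
Since $u$ is upper semicontinuous and bounded above on the compact set $\overline{\omega}$, and the boundary circles concentrate on $K$ in Lebesgue measure, taking $\limsup_{j\to\infty}$ of the right-hand side yields $u(p)\le\sup_K u$. As $u\in\MPsh(\r^3)$ was arbitrary, $p\in\widehat K_\Mgot$.

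\emph{The ``only if'' direction} is the substantive part. For a bounded upper semicontinuous function $\phi\colon\overline{\omega}\to\r$, define the conformal minimal disc envelope
\[
E\phi(q)=\inf\left\{\frac{1}{2\pi}\int_0^{2\pi}\phi(f(e^{\imath t}))\,dt\ :\ f\colon\overline{\d}\to\omega \text{ conformal minimal},\ f(0)=q\right\}.
\]
Clearly $E\phi\le\phi$. The central technical claim, which I expect to be the main obstacle, is that the upper semicontinuous regularization $(E\phi)^*$ lies in $\MPsh(\omega)$. By Definition~\ref{def:minimalpsh} this amounts to subharmonicity of $(E\phi)^*$ along every affine $2$-plane $L\subset\r^3$, and reduces to the following gluing problem: given a conformal (harmonic) disc $g\colon\overline{\d}\to L\cap\omega$ and, for each $e^{\imath t}\in b\d$, a near-optimal competitor disc $f_t$ centered at $g(e^{\imath t})$, assemble them into one conformal minimal disc into $\omega$ centered at $g(0)$ whose boundary integral of $\phi$ approximates $\frac{1}{2\pi}\int_0^{2\pi}E\phi(g(e^{\imath t}))\,dt$. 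Performing this patching while keeping the map conformal, minimal, and valued in $\omega$ is precisely what the Riemann--Hilbert lemma for conformal minimal discs (Lemma~\ref{prop:RHnull}) is designed to supply; the dimensional restriction of that lemma is exactly why the theorem is stated only in $\r^3$.

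Granted $(E\phi)^*\in\MPsh(\omega)$ (and observing that any function in $\MPsh(\omega)$ extends to $\MPsh(\r^3)$ by setting it equal to $-\infty$ outside $\omega$), one concludes as follows. Fix $p\in\widehat K_\Mgot$ and $\epsilon>0$, and choose a continuous $\phi_\epsilon\colon\overline{\omega}\to[0,1]$ vanishing on $K$ and equal to $1$ outside the $\epsilon$-neighborhood $K_\epsilon$. A short argument using translations in the convex domain $\omega$ shows that for continuous $\phi$ the envelope $E\phi$ is itself upper semicontinuous, hence $E\phi_\epsilon\in\MPsh(\omega)$. The hull condition then gives
\[
E\phi_\epsilon(p)\le\sup_K E\phi_\epsilon\le\sup_K\phi_\epsilon=0.
\]
By the definition of $E\phi_\epsilon$ this produces a conformal minimal disc $f\colon\overline{\d}\to\omega$ with $f(0)=p$ whose boundary lies in $K_\epsilon$ outside a set of arbitrarily small Lebesgue measure, and a diagonal choice with $\epsilon=1/j$ delivers the sequence $\{f_j\}$ demanded by the theorem.
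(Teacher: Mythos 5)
Your ``if'' direction is fine and follows the same pattern as the paper: take $u\in\MPsh(\r^3)$, use Lemma~\ref{lem:characterizationMS} to get $u\circ f_j$ subharmonic, split the boundary integral into the part landing near $K$ and the small remainder, and invoke upper semicontinuity of $u$.

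The ``only if'' direction has a genuine gap. Your step ``any function in $\MPsh(\omega)$ extends to $\MPsh(\r^3)$ by setting it equal to $-\infty$ outside $\omega$'' is false. The extended function is in general not even upper semicontinuous at $b\omega$, and worse, for any affine $2$-plane $L$ meeting both $\omega$ and the interior of $\r^3\setminus\omega$, the restriction to $L$ is $-\infty$ on a nonempty open piece while finite somewhere else on $L\cap\omega$; a subharmonic function on a connected open set is either $\equiv -\infty$ or locally integrable, so this restriction cannot be subharmonic. Since $\widehat K_\Mgot$ is defined via the class $\MPsh(\r^3)$ and your envelope $E\phi_\epsilon$ lives only in $\MPsh(\omega)$, you cannot invoke the hull condition as you did. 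The paper handles this through the analogue of Proposition~\ref{lem:smoothing}-(iv): given $u\in\MPsh(\omega)$ and a compact $K\subset\omega$, one builds $v\in\MPsh(\r^3)$ with $v=u$ on $K$ by multiplying $u$ by a cutoff that is $1$ near $\widehat K_\Mgot$ and adding a large multiple of a convex exhaustion of $\r^3$ vanishing near $\widehat K_\Mgot$. This replaces your $-\infty$ extension and gives exactly the inequality $E\phi_\epsilon(p)\le\sup_K E\phi_\epsilon$ you need.

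Two secondary remarks. First, the central technical claim that the envelope lies in $\MPsh(\omega)$ is Theorem~\ref{th:minpshminorant}, and the paper proves it indirectly: it first establishes the corresponding statement for null plurisubharmonic functions on domains in $\c^3$ (Theorem~\ref{th:nullpshminorant}) using Lemma~\ref{prop:RHnull}, and then transfers to $\r^3$ by lifting $\phi$ to the tube $\omega\times\imath\r^3$ via Lemma~\ref{lem:minimal}. You propose to prove the $\r^3$ statement directly by a Riemann--Hilbert gluing for conformal minimal discs, but note that Lemma~\ref{prop:RHnull} as stated concerns null holomorphic discs in $\c^3$, not conformal minimal discs in $\r^3$; to use it one must pass through the real/imaginary part correspondence, which is precisely the tube lift the paper employs. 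Second, you only claim $(E\phi)^*\in\MPsh(\omega)$, whereas Theorem~\ref{th:minpshminorant} gives the sharper statement that $E\phi$ itself is already minimal plurisubharmonic (or $\equiv-\infty$); this does not affect your argument since you subsequently verify that $E\phi_\epsilon$ is upper semicontinuous for continuous data, but it is worth noting that the regularization is not needed.
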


Theorem \ref{th:minullhull} is also used to characterize the minimal hull by 
limits of Green currents; see Theorem \ref{th:minimalcurrents} and Corollaries \ref{cor:currentnullhull} 
and \ref{cor:hessian}.

In the proof of Theorem \ref{th:minullhull}  we use the following connection 
between conformal minimal discs in $\r^n$ and {\em holomorphic null discs}
in $\c^n$; see Osserman \cite{Osserman}. A smooth conformal immersion $g\colon \d\to\r^n$
is minimal if and only if it is harmonic, $\triangle g=0$. The map $g$ admits 
a harmonic conjugate $h$ on $\d$ such that
$f=g+\imath h= (f_1,\ldots,f_n)\colon \d\to\c^n$ is a holomorphic immersion
satisfying the identity
\[
	f'_1(\zeta)^2 + f'_2(\zeta)^2 +\cdots + f'_n(\zeta)^2 = 0\qquad \forall \zeta\in \d.
\]
Such $f$ is said to be a {\rm holomorphic null disc} in $\c^n$. 
Conversely, the real and the imaginary part  of a holomorphic null disc in $\c^n$ are conformal 
minimal discs in $\r^n$. There is a corresponding relationship between  minimal plurisubharmonic functions 
on a domain $\omega\subset\r^n$ and {\em null plurisubharmonic functions} 
on the tube $\Tcal_\omega=\omega\times \imath\r^n\subset  \c^n$
(see Definition \ref{def:npsh} and Lemma \ref{lem:minimal}).
For any null plurisubharmonic function $u$ on a domain $\Omega\subset\c^n$ 
and null holomorphic disc $f \colon \d\to\Omega$
the composition $u\circ f$ is a subharmonic function on $\d$
(cf.\ Proposition \ref{lem:smoothing}-(iii)). The {\em null hull}, $\widehat K_\Ngot$, 
of a compact set $K\subset\c^n$, is the hull (\ref{eq:Phull}) with respect to the family $\Pcal=\NPsh(\c^n)$; 
see  Definition \ref{def:nullhull}. We obtain the following characterization of
the null hull of a compact set in $\c^3$ by sequences of null holomorphic discs,
in analogy with Theorems \ref{th:Poletsky} and \ref{th:minullhull}.

%
%
%
%
\begin{theorem}[Corollary \ref{cor:DD}] \label{th:DD}
Let $K$ be a compact set in $\c^3$ and let  $\Omega\subset \c^3$ be a
bounded pseudoconvex Runge domain containing $K$.
A point $p\in \Omega$ belongs to the null hull $\widehat K_\Ngot$ of $K$ 
if and only if there exists a sequence of null holomorphic discs 
$f_j\colon \overline \d\to \Omega$ such that for all $j=1,2,\ldots$ we have 
$f_j(0)=p$ and
\begin{equation*} 
	\big| \{t\in [0,2\pi]: \dist(f_j(e^{\imath t}),K)<1/j\}  \big| \ge 2\pi -1/j.
\end{equation*}
\end{theorem}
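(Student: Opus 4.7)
\emph{My plan.} The statement has two implications; the $(\Leftarrow)$ direction is a routine measure-theoretic consequence of the defining sub-mean-value property of null plurisubharmonic functions along null discs (Proposition \ref{lem:smoothing}-(iii)), while the $(\Rightarrow)$ direction is the substantive one and parallels the proof of Theorem \ref{th:minullhull} via a Poletsky-style null disc functional.

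For $(\Leftarrow)$, given $u \in \NPsh(\c^3)$ and the sequence $f_j$, I would apply the sub-mean-value inequality to the subharmonic function $u \circ f_j$ at the origin. Splitting the boundary integral over the set $A_j = \{t : \dist(f_j(e^{\imath t}), K) < 1/j\}$ (of measure $\ge 2\pi - 1/j$) and its complement, and using upper semicontinuity of $u$ near $K$ together with boundedness of $u$ on $\overline\Omega$, one obtains $u(p) \le \sup_K u$ in the limit $j \to \infty$. Since $u \in \NPsh(\c^3)$ is arbitrary, $p \in \widehat K_\Ngot$.

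For $(\Rightarrow)$, I would argue by contradiction. Suppose there is $\delta > 0$ such that every null holomorphic disc $f\colon \overline\d \to \Omega$ with $f(0) = p$ satisfies $|\{t : \dist(f(e^{\imath t}), K) < \delta\}| < 2\pi - \delta$. Define the continuous cutoff $\phi_\delta(z) = \min(1, \dist(z,K)/\delta)$ on $\overline\Omega$ and the \emph{null disc envelope}
$$E_\delta(q) = \inf \Big\{ \frac{1}{2\pi}\int_0^{2\pi} \phi_\delta(f(e^{\imath t}))\, dt : f\colon \overline\d \to \Omega \text{ null holomorphic},\ f(0) = q \Big\}.$$
The assumption yields $E_\delta(p) \ge \delta/(2\pi) > 0$, while the constant null disc $f \equiv q$ gives $E_\delta \le \phi_\delta$, so $E_\delta \equiv 0$ on $K$ and $\sup_K E_\delta = 0$. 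If $E_\delta$ is null plurisubharmonic on $\Omega$ and can be promoted to (or approximated from above by) a null plurisubharmonic function on $\c^3$ using the pseudoconvex Runge hypothesis, then $p \in \widehat K_\Ngot$ forces $E_\delta(p) \le 0$, a contradiction.

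The main difficulty is verifying that $E_\delta$ is null plurisubharmonic on $\Omega$. Upper semicontinuity is routine. The crux is the sub-mean-value property along an arbitrary null disc $g\colon \overline\d \to \Omega$: given approximately optimal null discs $f_\zeta$ emanating from each boundary point $g(\zeta)$, one must assemble these with $g$ into a single null disc $F\colon \overline\d \to \Omega$ with $F(0) = g(0)$ whose boundary trace faithfully samples the $f_\zeta$. The null condition $\sum_{i=1}^3 (F_i')^2 = 0$ is a quadratic constraint that rules out direct Poletsky-style gluing of arbitrary holomorphic discs; this is precisely the obstruction resolved by the Riemann-Hilbert lemma for null discs (Lemma \ref{prop:RHnull}), which allows one to deform a null disc in a boundary collar toward prescribed small null discs while preserving the null condition and remaining close to $g$ on compact subsets of $\d$. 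This lemma, currently available only in dimension $3$, is exactly the reason the theorem is stated in $\c^3$.
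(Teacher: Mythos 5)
Your proposal is correct and takes essentially the same approach as the paper's proof. The $(\Leftarrow)$ direction is the paper's computation almost verbatim, and the $(\Rightarrow)$ direction in both cases reduces to Theorem \ref{th:nullpshminorant} (null plurisubharmonicity of the Poletsky-type disc envelope over $\Ngot(\d,\Omega,z)$, which rests on the Riemann--Hilbert lemma for null discs in $\c^3$) together with Lemma \ref{lem:Nhull-Runge} (computing $\widehat K_\Ngot$ with functions in $\NPsh(\Omega)$ alone via the Runge hypothesis); the paper feeds the envelope the upper semicontinuous function equal to $-1$ on a shrinking neighborhood of $K$ and $0$ elsewhere and argues directly, whereas you use the continuous cutoff $\phi_\delta$ and argue by contradiction, which is a cosmetic variation.
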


By using this result we also characterize the null hull by limits of Green currents 
supported on null holomorphic discs; see Theorem \ref{th:nullcurrents}.
An important ingredient is Lemma \ref{lem:mass} which shows that 
the mass of the Green current on $\r^n$ supported by a conformal minimal disc
$f\colon \cd\to\r^n$ is bounded by the $L^2$-norm of $f$ on the circle $\t=b\d$.

We conclude this introduction by mentioning another line of results obtained in the 
paper.  Assume that $\omega$ is a domain in one of the spaces $\c^n$, $\c^3$, or $\r^3$. 
Let $\Pcal(\omega)$ denote one of the sets $\Psh(\omega)$, $\NPsh(\omega)$, or $\MPsh(\omega)$.
Let $\Gcal(\omega)$ denote the dual set of discs $\cd\to\omega$: 
holomorphic if $\omega\subset\c^n$ and $\Pcal(\omega)=\Psh(\omega)$,
null holomorphic if $\omega\subset\c^3$ and $\Pcal(\omega)=\NPsh(\omega)$, 
and conformal minimal if $\omega\subset\r^3$ and $\Pcal(\omega)=\MPsh(\omega)$. 
The following result is the main ingredient in the proof of all the theorems 
mentioned so far; it furnishes many nontrivial examples of functions 
in classes under consideration.

%
%
%
%
\begin{theorem}  \label{th:Pminorant} 
Assume that $\omega$, $\Pcal(\omega)$ and $\Gcal(\omega)$ are as above. 
Let $\phi \colon \omega\to \r\cup\{-\infty\}$ be an upper semicontinuous  function. 
Then the function $u:\omega\to \r\cup\{-\infty\}$, given by
\begin{equation}
\label{eq:PF}
   u(x) = \inf \Big\{\int^{2\pi}_0 \phi(f(\E^{\imath t}))\, 
   \frac{dt}{2\pi} \colon \ f\in \Gcal(\omega),\ f(0)=x \Big\}, \quad x\in \omega,
\end{equation}
belongs to $\Pcal(\omega)$  or is identically $-\infty$; 
moreover, $u$ is the supremum of functions in $\Pcal(\omega)$  which are 
bounded above by $\phi$. 
\end{theorem}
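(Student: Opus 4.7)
The plan is to establish both directions of the claim that $u$ equals the supremum of $\Pcal(\omega)$-minorants of $\phi$. For the easy direction, let $v \in \Pcal(\omega)$ satisfy $v \leq \phi$. By the defining duality between $\Pcal$ and $\Gcal$, for every $f \in \Gcal(\omega)$ with $f \in \Cscr(\cd,\omega)$ the composition $v \circ f$ is subharmonic on $\d$ and upper semicontinuous up to the boundary, so the sub-mean value inequality at the origin gives
\[
  v(x) \leq \frac{1}{2\pi}\int_0^{2\pi} v(f(\E^{\imath t}))\,dt \leq \frac{1}{2\pi}\int_0^{2\pi} \phi(f(\E^{\imath t}))\,dt
\]
whenever $f(0)=x$, and taking infimum over such $f$ yields $v \leq u$. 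Since the constant disc $f\equiv x$ lies in $\Gcal(\omega)$, we also have $u \leq \phi$. The remaining task is to show $u \in \Pcal(\omega)$ when $u \not\equiv -\infty$; together with $u \leq \phi$ this makes $u$ itself a $\Pcal$-minorant, hence the largest such.

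To place $u$ in $\Pcal(\omega)$ I must verify upper semicontinuity and the sub-mean inequality along every disc in $\Gcal(\omega)$. For upper semicontinuity, approximate $\phi$ from above by a decreasing sequence of continuous functions $\phi_k \downarrow \phi$, let $u_k$ be the corresponding envelopes, and note $u=\inf_k u_k$; it suffices to show each $u_k$ is upper semicontinuous. Given an $\epsilon$-optimal disc $f_0\in \Gcal(\omega)$ at $x_0$, I would exhibit a continuous family $f_x \in \Gcal(\omega)$ with $f_x(0)=x$ and $f_x\to f_0$ uniformly as $x\to x_0$: in the holomorphic case a translation works; for null discs, subtracting a constant vector preserves the null condition; for conformal minimal discs one applies the same construction to a null lift via the correspondence described before Theorem~\ref{th:DD} and Lemma~\ref{lem:minimal}. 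Dominated convergence against the continuous $\phi_k\circ f_x$ gives $\limsup_{x\to x_0} u_k(x) \leq u_k(x_0)+2\epsilon$.

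The main obstacle is the sub-mean inequality: for every $g\in \Gcal(\omega)$ with $g(0)=x$, one must show
\[
   u(x) \leq \frac{1}{2\pi}\int_0^{2\pi} u(g(\E^{\imath t}))\,dt.
\]
Fix $\epsilon>0$ and, after a regularization, choose a continuous family $h_t \in \Gcal(\omega)$ with $h_t(0)=g(\E^{\imath t})$ and $\frac{1}{2\pi}\int_0^{2\pi} \phi(h_t(\E^{\imath s}))\,ds \leq u(g(\E^{\imath t}))+\epsilon$. The aim is to manufacture a single disc $F\in \Gcal(\omega)$ with $F(0)=x$ whose boundary arc $F(\t)$ traces the boundary family $\{h_t(\t)\}_{t\in[0,2\pi]}$ except on a set of arbitrarily small measure. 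Granted such $F$, Fubini and upper semicontinuity of $\phi$ yield
\[
   u(x) \leq \frac{1}{2\pi}\int_0^{2\pi} \phi(F(\E^{\imath t}))\,dt \leq \frac{1}{2\pi}\int_0^{2\pi} u(g(\E^{\imath t}))\,dt + 2\epsilon,
\]
and $\epsilon\downarrow 0$ concludes. Producing $F$ is the technical heart of the theorem: for holomorphic discs it is Poletsky's original construction, while for null discs in $\c^3$ it is precisely what the Riemann--Hilbert modification Lemma~\ref{prop:RHnull} delivers, yielding a null disc uniformly close to $g$ on compacta whose boundary follows the prescribed family. The conformal minimal case in $\r^3$ then follows by taking real parts of null discs via Lemma~\ref{lem:minimal}. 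This is also where the dimension restriction enters, since the Riemann--Hilbert step is presently available only in dimension three.
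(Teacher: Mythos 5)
Your overall plan is sound in outline: show the envelope $u$ lies in $\Pcal(\omega)$ and is bounded above by $\phi$, and note that every $v\in\Pcal(\omega)$ with $v\le\phi$ satisfies $v\le u$ by the sub-mean value inequality along discs. The upper-semicontinuity argument via a decreasing sequence of continuous majorants $\phi_k\downarrow\phi$ and small translations of near-optimal discs also works in all three settings.

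The gap is in the gluing step, and it is precisely the point where you overstate what Lemma~\ref{prop:RHnull} provides. That lemma perturbs a given null disc $f$ so that its boundary traces a prescribed family of \emph{round linear discs in a single fixed null direction} $\theta$, namely $\xi\mapsto f(\zeta)+\mu(\zeta)\xi\theta$; it does \emph{not} produce a null disc whose boundary follows an arbitrary continuous family of null discs $\{h_t\}$. Your proposal requires exactly the latter: the near-optimal discs $h_t$ at each boundary point $g(\E^{\imath t})$ can be completely general null discs, and there is no tool in the paper (or, as far as is known, anywhere) that glues such a family into a single null disc in one step. The paper sidesteps this by using the Bu--Schachermayer iteration (Proposition~\ref{prop:BS}/\ref{propo_BS}): one builds a decreasing sequence $u_j$ by repeatedly taking infima over \emph{affine} null discs $z+\zeta\theta$, and these $u_j$ converge to the largest $\Pcal$-minorant. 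Because the near-optimal discs appearing at each stage of the iteration are affine in a fixed null direction (locally constant over finitely many arcs of $\t$), Lemma~\ref{prop:RHnull} applies directly; the proof of Theorem~\ref{th:nullpshminorant} then unwinds $n$ stages of the iteration, applying the RH lemma finitely many times at each stage, to produce a single null disc whose Poisson average of $\phi$ is close to $u_\phi(z)$. Without the iteration to reduce arbitrary discs to affine ones, the argument you sketch cannot be completed with the available Riemann--Hilbert lemma.

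A secondary issue: even granting a suitable gluing device, you should verify that $u\in\Pcal(\omega)$ requires not just the sub-mean inequality at the center of discs through each point, but subharmonicity of $u$ restricted to each relevant line/plane; this follows once you have the sub-mean inequality at every point and every radius together with upper semicontinuity, but that multi-scale statement should be stated explicitly rather than left implicit, as you do when passing from ``sub-mean along $g$'' to ``$u\in\Pcal(\omega)$.''
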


The basic case of Theorem  \ref{th:Pminorant}, with $\omega$ a domain 
in $\c^n$ and $\Pcal(\omega)=\Psh(\omega)$, is a fundamental result 
due to Poletsky \cite{Poletsky1991,Poletsky1993} and Bu and Schachermayer \cite{Bu-Schachermayer}.
(For generalizations see L\'arusson and Sigurdsson \cite{Lar-Sig1998,Lar-Sig2003},
Rosay \cite{Rosay1,Rosay2}, Drinovec Drnov\v sek and Forstneri\v c \cite[Theorem 1.1]{DF2012},
and Kuzman \cite{Kuzman}, among others.) 
The other cases are new and proved in this paper; see Theorem \ref{th:nullpshminorant} 
for null plurisubharmonic functions, and Theorem \ref{th:minpshminorant} 
for minimal plurisubharmonic functions. 

One of the main ingredients in the proof  of Theorem \ref{th:Pminorant}  is 
the approximate solution of the Riemann-Hilbert boundary value problem 
for discs in the respective classes $\Gcal(\omega)$. This result  is well known
for holomorphic discs, but  it has been established only very recently 
for null holomorphic discs (Alarc\'on and Forstneri\v c \cite{AF2013}).
The case of conformal minimal discs can be reduced to null holomorphic discs.

%
%
%
%
\begin{remark}\label{rem:mean-convex}
Smooth minimal plurisubharmonic functions have already been used in minimal surface theory.
One direction was  to determine when a region $W$ in $\r^3$ is universal for minimal surfaces, 
in the sense that every connected properly immersed minimal surface $M$ in $\r^3$, with nonempty 
boundary and contained in $W$, is of parabolic conformal type; 
see \cite{Collin} and \cite[Sec.\ 3.2]{Meeks-Perez:2004} for a 
discussion of this question.

Minimal plurisubharmonic functions can also be used to define the class of 
{\em mean-convex domains} in $\r^3$. In the literature on minimal surfaces, 
a domain $D\subset \r^n$ with smooth boundary $bD$ (of class $\Cscr^{1,1}$ or better) 
is said to be (strongly) {\em mean-convex} if the sum of the principal curvatures of $bD$ 
from the interior side is nonnegative (resp.\ positive) at each point.  
(See e.g.\ the paper by Meeks and Yau \cite{Meeks-Yau:1982} and the references therein.) 
By Harvey and Lawson  \cite[Theorem 3.4]{Harvey-Lawson2013}, a domain $D\Subset \r^3$
is mean-convex if and only if it admits a smooth minimal strongly plurisubharmonic 
exhaustion function $\rho\colon D\to \r$. 
Mean-convex domains have mainly been studied as natural barriers for minimal hypersurfaces 
in view of the maximum principle. The smallest mean-convex barrier containing a compact set 
$K\subset \r^n$ (if it exists) is called the {\em mean-convex hull} of $K$; it coincides with 
the minimal hull $\widehat K_\Mgot$ of $K$ when $n=3$ (cf.\ Definition \ref{def:minimalhull}). 
Our definition of the minimal hull (by the maximum principle with respect to the class of minimal plurisubharmonic functions)  
applies to an arbitrary compact set and is in line with the standard notion 
of plurisubharmonic and holomorphic hull of a compact set in a complex manifold. 
The main technique used for finding the mean-convex hull of a given
compact set $K$  is the {\em mean curvature flow} of hypersurfaces (with $K$ as an obstacle) 
introduced by Brakke \cite{Brakke:1978}. For a discussion of this subject see for example the monographs 
by Bellettini \cite{Bellettini} and Colding and Minicozzi \cite{Colding-Minicozzi:2011}.
An interesting recent result in this direction is that given a compact set $K\subset \r^n$ for $n\le 7$
with boundary of class $\Cscr^{1,1}$, the boundary $b\wh K$ of its mean-convex hull $\widehat K_\Mgot$ is also of class 
$\Cscr^{1,1}$, and $b\widehat K_\Mgot \setminus K$ consists of minimal hypersurfaces (Spadaro \cite{Spadaro:2011}).
\end{remark}

%
%
%
%
\section{Null plurisubharmonic functions} 
\label{sec:null}
In this section we introduce the notion of a {\em null plurisubharmonic function}
on a domain in $\c^n$. One of our main results, Theorem \ref{th:nullpshminorant},
expresses the biggest null plurisubharmonic minorant of a given 
upper semicontinuous function $\phi$ on a domain in $\c^3$ 
as the envelope of the Poisson functional of $\phi$ on the family 
of null holomorphic discs.  This is the analogue of the Poletsky-Bu-Schachermayer 
theorem for plurisubharmonic functions  (cf.\ Theorem \ref{th:Pminorant}).
In the following section we shall use null plurisubharmonic functions 
to introduce the null hull of a compact set in $\c^n$ (Definition \ref{def:nullhull}).

Let $\Agot$ denote the conical quadric subvariety of $\c^n$ given by
\begin{equation} \label{eq:Agot}
		\Agot=\{z=(z_1,\cdots,z_n)\in\c^n\colon z_1^2+\cdots +z_n^2=0\}.
\end{equation}
This is called the {\em null quadric} in $\c^n$, and its elements are  {\em null vectors}. 
Note that $\Agot$ has the only singularity at the origin. We shall write $\Agot^*=\Agot\setminus \{0\}$. 
A holomorphic map $f:M\to\c^n$ from an open Riemann surface $M$ is said to be a
{\em null holomorphic map} if 
\begin{equation} \label{eq:Ndisc}
		f'_1(\zeta)^2 + f'_2(\zeta)^2  + \cdots + f'_n(\zeta)^2=0
\end{equation}
holds in any local holomorphic coordinate $\zeta$ on $M$; equivalently, if the derivative
$f'$ has range in $\Agot$. A null holomorphic map $f$ is a (null) immersion if and only if the derivative 
$f'$ has range in $\Agot^*=\Agot\setminus \{0\}$. We shall mainly be concerned with (closed) {\em null holomorphic discs}
in $\c^n$, or {\em null discs} for short; these are $\Cscr^1$ maps
$f\colon \overline \d=\{\zeta\in\c: |\zeta|\le 1\} \to\c^n$
from the closed disc $\cd$ which are holomorphic on the open disc 
$\d$ and satisfy the nullity condition (\ref{eq:Ndisc}).
We denote  by $\Ngot(\d,\Omega)$ the set of all immersed null holomorphic discs 
$\cd \to\Omega$ with range in a domain $\Omega\subset \c^n$.

%
%
%
%
\begin{definition} \label{def:npsh}
An upper semicontinuous function $u:\Omega\to\r\cup\{-\infty\}$ on a domain $\Omega\subset\c^n$ is
{\em null plurisubharmonic} if the restriction of $u$ to any affine complex line $L\subset \c^n$
directed by a null vector $\theta\in \Agot^*$ is subharmonic on $L\cap \Omega$,
where $\Agot$ is given by (\ref{eq:Agot}). The class of all such functions is denoted by $\NPsh(\Omega)$.
\end{definition}

Clearly  we have that $\Psh(\Omega) \subset \NPsh(\Omega)$, and the inclusion is proper as shown by the 
following example. (Further examples are furnished by Lemma \ref{lem:extend} below.)

%
%
%
%
\begin{example}\label{ex:npsh}
The function $u(z_1,z_2,z_3)=|z_1|^2+|z_2|^2-|z_3|^2$ is not pluri\-subharmonic on any 
open subset of $\c^3$. However, it is null  plurisubharmonic on $\c^3$ which is seen as follows.
Fix a null vector $z=(z_1,z_2,z_3)\in \Agot^*$.
Then $u(\zeta z)=|\zeta|^2 u(z)$ for $\zeta\in \c$, and we need to check that 
$u(z)\ge 0$. The equation (\ref{eq:Agot}) gives 
$z_3^2=-(z_1^2+z_2^3)$ and hence $|z_3|^2\le |z_1|^2+|z_2|^3$ by the triangle
inequality, so  $u(z)= |z_1|^2+|z_2|^2-|z_3|^2 \ge 0$.
(Note that $u$ vanishes on some null vectors; for example, on 
$z=(\imath \sqrt 2/2,\imath \sqrt 2/2,1)$.) 
For any point $a\in\c^3$ the function $u(a+\zeta z)$ differs from 
$u(\zeta z)$ only by a harmonic term in $\zeta$, so the restriction of 
$u$ to any affine complex line directed by a null vector is subharmonic.
\qed \end{example}

Given a $\Cscr^2$ function $u$ on a domain in $\c^n$ we denote by $\Lcal_u(x;\theta)$ the Levi form 
of $u$ at the point $x$ in the direction of the vector $\theta\in T_x\c^n$.

The next lemma summarizes some of the properties of  null plurisubharmonic functions which 
are analogous to the corresponding properties of plurisubharmonic functions.

%
%
%
%
\begin{proposition}
\label{lemma-nullpsh1}
Let $\Omega$ be a domain in $\c^n$.
	\begin{itemize}
		\item[(i)] If $u,v\in \NPsh(\Omega)$ and $c>0$, then $cu,\ u+v,\ \max\{u,v\}\in \NPsh(\Omega)$.
		\item[(ii)] If $u\in \Cscr^2(\Omega)$ then $u\in \NPsh(\Omega)$  if and 
		only if $\Lcal_u(x;\theta)\ge 0$ for every $x\in \Omega$ and $\theta\in \Agot$.
		\item[(iii)]  The limit of a decreasing sequence of null plurisubharmonic functions on $\Omega$
		 is a null plurisubharmonic function on  $\Omega$.
		\item[(iv)] If $\Fcal\subset \NPsh(\Omega)$ is a family  which is
		locally uniformly bounded above, then the upper semicontinuous regularization $v^*$ of the
		upper envelope $v(x)=\sup_{u\in\Fcal} u(x)$ is  null plurisubharmonic on $\Omega$.
		\item[(v)] If $u\in \NPsh(\c^n)$ is bounded above, then $u$ is constant.
		\item[(vi)] The Levi form of any null plurisubharmonic function of class $\Cscr^2$ 
		has at most one negative eigenvalue at each point.
	\end{itemize}
\end{proposition}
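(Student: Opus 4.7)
The plan is to reduce each statement to the corresponding classical fact for subharmonic functions on $\c$, applied to restrictions of $u\in\NPsh(\Omega)$ to affine complex lines in null directions. Part (i) is immediate from the analogous closure properties for subharmonic functions of one complex variable. For (ii), on any affine complex line $L=x_0+\c\theta$ with $\theta\in\Agot$ one computes $\triangle_\zeta\bigl(u(x_0+\zeta\theta)\bigr)=4\,\Lcal_u(x_0+\zeta\theta;\theta)$, so subharmonicity of $u|_L$ for every null direction $\theta$ is equivalent to $\Lcal_u(x;\theta)\ge 0$ for all $x\in\Omega$ and $\theta\in\Agot$. Part (iii) follows because upper semicontinuity is preserved under decreasing limits, and the classical result on decreasing limits of subharmonic functions applies along each null line.

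For (iv), I would mimic the standard argument for plurisubharmonic functions. Choquet's lemma yields a countable subfamily $\{u_j\}\subset\Fcal$ whose upper envelope $w=\sup_j u_j$ satisfies $w^*=v^*$. On any affine null complex line $L$, the restriction $w|_L$ is a pointwise supremum of countably many locally uniformly bounded subharmonic functions, so its usc regularization along $L$ is subharmonic. One then verifies, as in the classical case (see e.g.\ Klimek or H\"ormander), that $v^*|_L$ coincides with $(w|_L)^*$ almost everywhere and, being usc, inherits the sub-mean-value inequality on discs contained in $L$; hence $v^*$ is null plurisubharmonic.

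The main obstacle is (v), which hinges on the geometric sumset identity $\Agot+\Agot=\c^n$ when $n\ge 3$. Given $w\in\c^n$, finding $v_1\in\Agot$ with $w-v_1\in\Agot$ reduces to the single affine-linear constraint $w\cdot v_1=(w\cdot w)/2$ together with $v_1\cdot v_1=0$; after a complex orthogonal change of coordinates putting $w$ into the standard form $\alpha e_1$ (or $w\in\Agot$ itself), these equations are directly solvable provided $n\ge 3$. Granted this, any bounded above $u\in\NPsh(\c^n)$ restricts on every null complex line to a bounded above subharmonic function on $\c$, hence is constant on that line by the classical Liouville theorem. For any two points $p,q\in\c^n$, writing $q-p=v_1+v_2$ with $v_1,v_2\in\Agot$ gives a length-two chain of null lines $p\to p+v_1\to p+v_1+v_2=q$ along which $u$ is constant, so $u(p)=u(q)$.

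For (vi), suppose $\Lcal_u(x;\cdot)$ has at least two negative eigenvalues; then it is negative definite on some complex $2$-plane $V\subset\c^n$. The restriction of the quadratic form $Q(z)=z_1^2+\cdots+z_n^2$ to $V\cong\c^2$ is a quadratic form, which either vanishes identically (in which case every vector of $V$ is null) or cuts out a nonempty conic of null vectors in $V$. Either way $V$ contains a nonzero null vector $\theta$, and (ii) gives $\Lcal_u(x;\theta)\ge 0$, contradicting the negative definiteness of $\Lcal_u(x;\cdot)$ on $V$.
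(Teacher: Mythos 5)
Your approach matches the paper's (which is stated in a single sentence: parts (i)--(iv) are ``standard,'' (v) is by chaining null lines, (vi) is because every complex $2$-plane meets $\Agot^*$). Parts (i), (ii), (iii), (v), (vi) are fine; in (v), note that when $q-p$ is itself a null vector the two pieces $v_1,v_2$ you produce are collinear (for $n=3$), but this does not matter since a single null line then connects $p$ to $q$.

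The one step that deserves a warning is (iv). You assert, ``as in the classical case,'' that $v^*|_L$ coincides with $(w|_L)^*$ almost everywhere on $L$. This is not an automatic transfer of the classical PSH argument. The negligibility of $\{w<w^*\}$ in the classical setting is a statement in the ambient space, and the intersection of such a negligible set with a fixed affine null line $L$ need not have zero one-dimensional measure; moreover, $w$ is a countable supremum of functions that are only subharmonic along null directions, not subharmonic in $\r^{2n}$, so even the classical negligibility statement does not apply to $w$ without extra work. A cleaner route, which avoids Choquet's lemma altogether and is surely what the paper means by ``standard,'' is the following. Fix a null direction $\theta\in\Agot^*$, a point $z_0$, and a radius $r>0$ with the closed disc $\{z_0+\zeta\theta:|\zeta|\le r\}\subset\Omega$, and choose $z_k\to z_0$ with $v(z_k)\to v^*(z_0)$. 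For each $u\in\Fcal$ the one-variable submean inequality on the circle centred at $z_k$ gives
\[
u(z_k)\le\int_0^{2\pi}u\bigl(z_k+r\E^{\imath t}\theta\bigr)\,\frac{dt}{2\pi}
\le\int_0^{2\pi}v^*\bigl(z_k+r\E^{\imath t}\theta\bigr)\,\frac{dt}{2\pi},
\]
and taking the supremum over $u\in\Fcal$ yields the same bound for $v(z_k)$. Since $v^*$ is upper semicontinuous and locally bounded above, the reverse Fatou lemma applied on the compact family of circles gives
\[
v^*(z_0)=\lim_k v(z_k)\le\limsup_k\int_0^{2\pi}v^*\bigl(z_k+r\E^{\imath t}\theta\bigr)\,\frac{dt}{2\pi}
\le\int_0^{2\pi}v^*\bigl(z_0+r\E^{\imath t}\theta\bigr)\,\frac{dt}{2\pi}.
\]
Thus $v^*$ restricted to any affine null line is an upper semicontinuous function satisfying the submean inequality, hence subharmonic there, which is exactly $v^*\in\NPsh(\Omega)$. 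You should replace the hand-waved almost-everywhere identification by this direct argument.
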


Properties (i)--(iv) follow in a standard way from the corresponding properties 
of subharmonic functions.
Property (v) follows from the fact that every bounded above subharmonic function on $\c$
is constant, and any two points of $\c^n$ can be connected by a finite chain of affine 
complex null lines. Part (vi) is seen by observing that every $2$-dimensional complex linear 
subspace of $\c^n$ intersects $\Agot^*$.

Part (ii) of Proposition \ref{lemma-nullpsh1} justifies the following definition.

%
%
%
%
\begin{definition}\label{def:SNPsh}
A function $u\in \Cscr^2(\Omega)$ on a domain $\Omega\subset \c^n$ is said to be
{\em strongly null plurisubharmonic} if $\Lcal_u(x;\theta)>0$ for every $x\in \Omega$
and $\theta\in\Agot^*$. 
\end{definition}

%
%
%
%
\begin{remark}\label{rem:NPsh}
Null plurisubharmonic functions are a natural substitute for plurisubharmonic functions
when considering only complex null curves (instead of all complex curves).
They are a special case of \emph{$\g$-plurisubharmonic functions}  of 
Harvey and Lawson \cite{Harvey-Lawson2012} who in a series of papers 
studied plurisubhamonicity in a more general  geometric context
(see  \cite{Harvey-Lawson2009,Harvey-Lawson2012,Harvey-Lawson2013},
among others). Let $X$ be a complex manifold endowed with a hermitian metric, 
and let $p\in \{1,\ldots,\dim X\}$ be an integer.
Let $G(p,X)$ denote the Grassmann bundle over $X$ whose fiber at a point $x\in X$ is the set 
of all complex $p$-dimensional subspaces of the tangent space $T_x X$.  Let $\g\subset G(p,X)$. 
A function $u\colon X\to \r$ of class $\Cscr^2$ is said to be \emph{$\g$-plurisubharmonic} 
\cite{Harvey-Lawson2012} if  for each $x\in X$ and $W\in \g_x$ the trace of the Levi form of $u$ at 
$x$ restricted to $W$ is nonnegative. If $\g=G(p,X)$, then $\g$-plurisubharmonic functions
are called {\em $p$-plurisubharmonic}; for $p=1$ we get the usual 
plurisubharmonic functions, while the case $p=\dim X$ corresponds to subharmonic functions. 
Taking $\g_z\subset G(1,\c^n)$ to be the set of null complex lines through the origin 
we get null plurisubharmonic functions.
\qed \end{remark}

One might ask whether there is any relationship between null plurisubharmonic functions
and $2$-plurisubharmonic functions (see Remark \ref{rem:NPsh} above), especially 
in light of the fact that the Levi form of a null plurisubharmonic function has at most 
one negative eigenvalue at each point. The following example shows that this is not the case.

%
%
%
%
\begin{example}
Let $\nu_\pm=\sqrt 2^{-1}(1,\pm \igot,0)$ and $\be_3=(0,0,1)$. These three vectors
form a unitary basis of $\c^3$, and $\nu_\pm$ are null vectors.  For $z=(z_1,z_2,z_3) \in\c^3$ let 
\[
	u(z_1\nu_+ +z_2\nu_- +z_3 \be_3)= \epsilon |z_1|^2+ b |z_2|^2-|z_3|^2.
\]
We claim that $u$ is null plurisubharmonic on $\c^3$ if $\epsilon>0$ is small enough 
and $b>0$ is big enough. As in Example \ref{ex:npsh} we only need to verify that
$u\ge 0$ on $\Agot$. Since $u$ is homogeneous, it suffices to check that $u>0$ on the compact
set $\Agot_1\subset \Agot$ consisting of unit null vectors. Let $\Sigma$ be the complex 
2-plane spanned by $\nu_+$ and $\be_3$.  Then $\Sigma \cap \Agot_1 =\{\E^{\imath t}\nu_+: t\in \r\}$.
The term $\epsilon |z_1|^2$ ensures positivity of $u$ on a neighborhood of $\Sigma \cap \Agot_1$
in $\Agot_1$. Since $b |z_2|^2$ is positive on $\Agot_1\setminus \Sigma$,
we see that $u>0$ on $\Agot_1$ if  $b$ is chosen big enough, so
$u$ is strongly null plurisubharmonic on $\c^3$. However, since the trace of the Levi form 
of $u$ on $\Sigma$ equals $\epsilon-1$, $u$ is not $2$-plurisubharmonic if $\epsilon<1$. 
\qed\end{example}

The next proposition gives some further  properties of null plurisubharmonic functions.
In particular,  we can approximate them by smooth null plurisubharmonic functions.

%
%
%
%
\begin{proposition}\label{lem:smoothing}
Let $\Omega$ be a domain in $\c^n$.
	\begin{itemize}
		\item[(i)] If $u\in \NPsh(\Omega)$ and $u\not\equiv-\infty$ on $\Omega$, 
		then $u\in L^1_{\rm loc}(\Omega)$.
		\item[(ii)] If $u\in \NPsh(\Omega)$ and $u\not\equiv-\infty$ on $\Omega$, 
		then $u$ can be  approximated 
		by smooth null plurisubharmonic functions on domains compactly contained in $\Omega$.
		\item[(iii)] If $u\in \NPsh(\Omega)$ and $f\in\Ngot(\d,\Omega)$ then 
		$u\circ f$ is subharmonic on $\cd$. More generally, if $f:M\to \Omega$ is a 
		null holomorphic curve then $u\circ f$ is  subharmonic on $M$.
		\item[(iv)] If $\Omega$ is a pseudoconvex Runge domain in $\c^n$, $u\in \NPsh(\Omega)$
		and $K$ is a compact set in $\Omega$, then there exists a function $v\in \NPsh(\c^n)$
		such that $v=u$ on $K$, $v$ is strongly plurisubharmonic on $\c^n\setminus \Omega$,
		and $v(z)>\max_K v$ for any point $z\in \c^n\setminus\Omega$.
	\end{itemize}
	\label{komp_psh_disc}
\end{proposition}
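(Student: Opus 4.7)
The plan is to handle the four parts separately, with (i)--(iii) proved by adapting the standard plurisubharmonic techniques to the null quadric $\Agot$, while (iv) requires a separate construction leveraging the Runge pseudoconvex hypothesis. For (i), I would let $E$ be the set of points of $\Omega$ having a neighborhood on which $u\equiv -\infty$; $E$ is open by definition, and I would show $E$ is closed in $\Omega$ by arguing that if $u\equiv -\infty$ on a nontrivial open arc of an affine null line $L$ meeting $E$, then subharmonicity of $u|_{L\cap\Omega}$ forces $u\equiv -\infty$ on the whole connected component of $L\cap\Omega$. Combined with the null-chain connectedness of $\c^n$ (used already in the proof of Proposition \ref{lemma-nullpsh1}(v)), this propagates $u\equiv -\infty$ globally; hence $E=\emptyset$ by the hypothesis $u\not\equiv -\infty$. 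At any $x_0\in\Omega$ with $u(x_0)>-\infty$, two successive applications of the submean inequality along null lines (first through $x_0$, then through points of the discs so obtained) sweep out a neighborhood of $x_0$ of positive $2n$-dimensional Lebesgue measure, and Fubini then gives $u\in L^1$ near $x_0$; a covering argument transfers local integrability to all of $\Omega$.

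For (ii), I would mollify $u$ by convolution with a smooth nonnegative radial $\chi_\epsilon(z)=\epsilon^{-2n}\chi(z/\epsilon)$ supported in the $\epsilon$-ball of $\c^n$, setting $u_\epsilon:=u*\chi_\epsilon$ on $\Omega_\epsilon=\{x\in\Omega:\dist(x,b\Omega)>\epsilon\}$; part (i) makes $u_\epsilon$ well-defined and smooth. For $x\in\Omega_\epsilon$ and $\theta\in\Agot^*$, the identity
\begin{equation*}
u_\epsilon(x+\zeta\theta)=\int u(x-y+\zeta\theta)\,\chi_\epsilon(y)\,dV(y)
\end{equation*}
exhibits $u_\epsilon$ restricted to the affine null line through $x$ in direction $\theta$ as an integral of functions subharmonic in $\zeta$ against the nonnegative density $\chi_\epsilon$, hence itself subharmonic; so $u_\epsilon\in\NPsh(\Omega_\epsilon)\cap\Cscr^\infty$, and the standard monotone submean argument gives $u_\epsilon\searrow u$ as $\epsilon\to 0^+$. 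For (iii), I would first assume $u\in\Cscr^2$; the chain rule yields
\begin{equation*}
\partial_\zeta\partial_{\bar\zeta}(u\circ f)(\zeta)=\Lcal_u\bigl(f(\zeta);f'(\zeta)\bigr)\ge 0
\end{equation*}
by Proposition \ref{lemma-nullpsh1}(ii), since $f'(\zeta)\in\Agot$. For general $u\in\NPsh(\Omega)$, I would apply (ii) on a relatively compact neighborhood $U\Subset\Omega$ of $f(\cd)$ (resp.\ $f(M)$) to produce smooth $u_\epsilon\in\NPsh(U)$ with $u_\epsilon\searrow u$; each $u_\epsilon\circ f$ is subharmonic, and hence so is the decreasing limit $u\circ f$.

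Part (iv) is the one that uses the Runge pseudoconvex hypothesis essentially, and I expect it to be the main obstacle. The plan is to construct a smooth strictly plurisubharmonic function $\tau$ on $\c^n$ with $\tau<u$ on a neighborhood of $K$ (at points where $u>-\infty$) and $\tau>\max_K u$ on $\c^n\setminus\Omega$, and then to define
\begin{equation*}
v(z)=\begin{cases}\max\{u(z),\tau(z)\},& z\in\Omega,\\ \tau(z),& z\in\c^n\setminus\Omega,\end{cases}
\end{equation*}
which is null plurisubharmonic on $\c^n$ (the two formulas agreeing in a neighborhood of $b\Omega$ inside $\Omega$, where $\tau>u$), strongly plurisubharmonic on $\c^n\setminus\Omega$, and equal to $u$ on $K$. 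The construction of $\tau$ relies on the fact that a pseudoconvex Runge domain is $\Oscr(\c^n)$-convex: for each $z_0\in\c^n\setminus\Omega$ there is $\sigma_{z_0}\in\Psh(\c^n)$ with $\sup_K\sigma_{z_0}<\sigma_{z_0}(z_0)$; I would cover a large sphere in $\c^n\setminus\Omega$ by finitely many such, take a smoothed maximum, and add $\epsilon|z|^2$ to obtain $\tau$. The remaining technical subtlety is the pluripolar set $\{u=-\infty\}\cap K$, on which no finite $\tau$ can satisfy $\tau<u$; my plan is to handle this by first performing the construction with $u$ replaced by its smooth mollification $u_\delta\in\NPsh$ from (ii) on a neighborhood of $K$, and then extracting the desired $v$ by a decreasing limit as $\delta\to 0$.
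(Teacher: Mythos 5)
Part (iii) of your plan matches the paper's proof; parts (ii) and (iv) are plausible alternatives to what the paper does. The genuine gap is in (i).

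You assert that two successive applications of the submean inequality along null lines through $x_0$ ``sweep out a neighborhood of $x_0$ of positive $2n$-dimensional Lebesgue measure.'' This is false: for fixed null directions $\theta_1,\theta_2\in\Agot^*$, the sweep produces the set $x_0+r\cd\,\theta_1+r\cd\,\theta_2$, a real $4$-dimensional submanifold of $\c^n\cong\r^{2n}$, which has Lebesgue measure zero for every $n\ge 3$; Fubini therefore gives nothing. (If you instead let the null direction vary at each step, the image is no longer a nice Cartesian product and the Fubini/push-forward argument needs to be reworked.) The missing structural input is that the null quadric $\Agot$ is not contained in any complex hyperplane of $\c^n$ (it is an irreducible quadric cone for $n\ge 3$), so it contains $\c$-linearly independent vectors $\theta_1,\dots,\theta_n$; one must then iterate the submean inequality $n$ times over the genuine polydisc $\{p+\sum_i z_i\theta_i:|z_i|\le r\}$, which is exactly what the paper does.

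The same structural fact underlies the paper's version of (ii), where the mollifier is chosen to depend only on $|z_1|,\dots,|z_n|$ in the coordinates adapted to $\theta_1,\dots,\theta_n$, so that $u*\phi_t\ge u$ and monotone decrease follow directly from iterated null submean inequalities. Your Euclidean-radial mollifier also yields a null plurisubharmonic smoothing (averaging translates of a null plurisubharmonic function is null plurisubharmonic), but to get $u*\chi_\epsilon\ge u$ and $u*\chi_\epsilon\searrow u$ you need to know first that null plurisubharmonic implies subharmonic on $\r^{2n}$ (so that ball means dominate $u$), a nontrivial averaging fact that you neither state nor prove. For (iv) you take a max-gluing route $v=\max\{u,\tau\}$ in place of the paper's $v=\chi u+C\rho$ with $\rho$ a globally defined plurisubharmonic exhaustion vanishing near $\wh K$; both are standard devices, but your sketched workaround for the set $\{u=-\infty\}\cap K$ (mollify, then pass to a limit in $\delta$) is only a plan, and it is not clear that the auxiliary functions $\tau_\delta$, hence the $v_\delta$, are monotone in $\delta$, so the existence of the limiting $v$ with $v|_K=u|_K$ would need a real argument.
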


\begin{proof}
We adapt the usual proof for the plurisubharmonic case 
(see for example \cite[Lemma 4.11, Theorem 4.12, Theorem 4.13]{Range}
or Chapter 4 in \cite{Hormander-convexity}).  For simplicity of 
notation we consider the case $n=3$; the same proofs apply to any $n\ge 3$.

We begin by explaining the proof of part (i).
Since the cone $\Agot\subset\c^3$ (\ref{eq:Agot}) is not contained in any complex hyperplane of $\c^3$, 
there exist three $\c$-linearly independent vectors $\theta_1,\theta_2, \theta_3\in \Agot$. 
As in the standard case we assume that $u(p)>-\infty$ for some $p\in \Omega$, 
and we need to prove that for every $r>0$ such that 
$$
	D_p^r :=\{p+z_1\theta_1+z_2\theta_2+z_3\theta_3\colon |z_i|\le r,\ 1\le i\le 3\}\subset \Omega
$$
we have $u\in L^1(D_p^r)$. Since $u$ is bounded above on the compact set $D_p^r$, we only 
need to show that $\int_{D_p^r} u\,dV>-\infty$; the claim then follows as in the standard case.
 Fix $r>0$ as above. For any $z=(z_1,z_2,z_3)$ such that $|z_i|\in [0,r]$ for $i=1,2,3$
the restriction of $u$ to the disc
$\{p +\sum_{i=1}^{j-1}z_i \theta_i + \zeta \theta_j\colon |\zeta|\le r \}$ is subharmonic
for each $1\le j\le 3$. Applying the submean value property in each variable we obtain
\[
	u(p)\le \int_0^{2\pi}\!\!\! \int_0^{2\pi} \!\!\! \int_0^{2\pi} u(p+r_1 \E^{\imath t_1}
	\theta_1+r_2\E^{\imath t_2}\theta_2 +r_3\E^{\imath t_3}\theta_3)\, dt_1\,dt_2\, dt_3
\]
for all $r_i\in [0,r]$, $i=1,2,3$. Multiplying this inequality 
by $r_1 r_2 r_3 dr_1\, dr_2\, dr_3$ and integrating with respect to $r_i\in [0,r]$
for  $1\le i\le 3$ gives  $u(p)\le C\int_{D_p^r} u\,dV$, where the positive constant $C$ 
depends on the  choice of the $\theta_j$'s. This proves (i).

In part (ii) we proceed as in the usual proof for smoothing plurisubharmonic 
functions, convolving $u$ by a smooth approximate identity $\phi_t$ satisfying the property
$\phi_t(\sum_{i=1}^{3} z_i \theta_i)=\varphi_t(|z_1|,|z_2|,|z_3|)$.
We leave the obvious details to the reader.

For functions $u\in \Cscr^2(\Omega)$, property (iii) is an immediate consequence
of Proposition \ref{lemma-nullpsh1}-(ii). In the general case the same result 
follows from part (ii) (smoothing) and the fact that the limit of a decreasing sequence 
of subharmonic functions is subharmonic.

It remains to prove (iv).  Since $\Omega$ is pseudoconvex and Runge, the polynomial hull 
$\widehat K$ is contained in $\Omega$ \cite[Theorem 2.7.3, p.\ 53]{Hormander-SCV}. 
Pick an open set $U\Subset\Omega$ with $\widehat K\subset U$.
By \cite[Theorem 2.6.11, p.\ 48]{Hormander-SCV} 
there is a smooth plurisubharmonic exhaustion function $\rho\colon\c^n \to\r_+$ that vanishes on a
neighborhood of $\widehat K$ and is positive strongly plurisubharmonic on $\c^n \setminus U$.
Let $\chi\colon \c^n\to [0,1]$ be a smooth function that equals $1$
on $U$ and has support contained in $\Omega$. Then the function
$v=\chi u+ C\rho$ for big $C>0$ has the stated properties.
\end{proof}

In the sequel we shall use the following result of Alarc\'on and Forstneri\v c \cite[Lemma 3.1]{AF2013} 
which gives approximate solutions to a Riemann-Hilbert boundary value problem for null 
holomorphic discs in $\c^3$. As in \cite{DF2012}, we can add to their result 
the estimate \eqref{small-increase} of the average of a given function $u$ over 
the boundary of a suitably chosen null disc.  Note that 
the central null disc $f$ in Lemma \ref{prop:RHnull} below is arbitrary,  
but the null discs centered at boundary points 
$f(\zeta)$, $\zeta\in b\d=\t$, are linear round discs  in the same direction.

%
%
%
%
\begin{lemma}\label{prop:RHnull}
Let $f\colon\cd\to\c^3$ be a null holomorphic immersion, let 
$\theta\in\Agot^*$ be a null vector, and let $\mu\colon\t\to [0,\infty)$
be a continuous function. Define the map $g\colon\t\times\cd\to\c^3$ by
$g(\zeta,\xi)=f(\zeta)+\mu(\zeta)\xi\theta$. Given $\epsilon>0$ and $0<r<1$, there exist
a number $r'\in[r,1)$ and a null holomorphic immersion $h\colon\cd\to\c^3$ 
with $h(0)=f(0)$, satisfying the following properties:
\begin{itemize}
\item[(i)] $\dist(h(\zeta),g(\zeta,\t))<\epsilon$ for all $\zeta\in\t$,
\item[(ii)] $\dist(h(\rho \zeta),g(\zeta,\cd))<\epsilon$ for all $\zeta\in\t$ and all 
$\rho\in [r',1)$, and
\item[(iii)] $h$ is $\epsilon$-close to $f$ in ${\mathcal C}^1$ topology on 
$\{\zeta \in\c\colon|\zeta|\le r'\}$.
\end{itemize}
Furthermore, given an upper semicontinuous function $u\colon\c^3\to \r\cup\{-\infty\}$ 
and an arc $I\subset \t$, we may achieve, in addition to the above, that 
\begin{equation}
\label{small-increase}
	\int_{I} u\bigl( h(\E^{\imath t})\bigr) \, \frac{dt}{2\pi} \le 
 	\int^{2\pi}_0 \!\! \int_{I} u\bigl(g(\E^{\imath t},\E^{\imath s})\bigr) 
       \frac{dt}{2\pi} \frac{ds}{2\pi} + \epsilon.
\end{equation}
\end{lemma}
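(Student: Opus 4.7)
\emph{Proof plan.} Parts (i)--(iii) are already established by the Riemann--Hilbert construction for null holomorphic discs due to Alarc\'on and Forstneri\v c \cite{AF2013}, so my task will be to superimpose the averaged estimate \eqref{small-increase} on top of that construction. I would follow the strategy of Drinovec Drnov\v sek and Forstneri\v c \cite{DF2012}, where the analogous upgrade was carried out in the classical holomorphic setting. The key point is that the disc $h$ produced by the Riemann--Hilbert procedure has boundary values which, on each of many small sub-arcs of $\t$, wind rapidly and nearly uniformly around the prescribed small circles $g(\zeta,\t)$; it is this winding that converts a single boundary integral over $I$ into the double integral in \eqref{small-increase}.

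First I would reduce to the case when $u$ is continuous. Since $f(\cd)\cup g(\t\times\cd)$ lies in some bounded open set $W\Subset\c^3$, I express $u|_{\overline W}$ as a decreasing pointwise limit of continuous functions $u_k\colon\overline W\to\r$. By monotone convergence the double integral of $u_k\circ g$ over $I\times\t$ decreases to that of $u\circ g$; choose $k$ so large that the two double integrals agree within $\epsilon/2$ (or so that the integral of $u_k\circ g$ is arbitrarily negative when the limit is $-\infty$). The disc $h$ will then be constructed using $u_k$, and since $u\le u_k$ the inequality $\int_I u_k\circ h\le\int\!\!\int u_k\circ g+\epsilon/2$ automatically yields \eqref{small-increase} for the original $u$.

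Next I would unpack the AF2013 construction. Their disc $h$ is obtained by approximating the two-variable null map $F(\zeta,\xi)=f(\zeta)+\mu(\zeta)\xi\theta$ on $\t\times\t$ by a null polynomial of the form $f(\zeta)+\mu(\zeta)\zeta^N\theta+(\text{correction})$ for a large integer $N$, the correction being chosen to enforce the nullity condition via Mergelyan-type approximation in the null quadric $\Agot$. Partition $\t$ into $N$ arcs $I_1,\ldots,I_N$ of length $2\pi/N$ with midpoints $\zeta_j$. On each $I_j$ the factor $\zeta^N$ traverses $\t$ exactly once at high angular speed, so by uniform continuity of $\mu$ and $f$ the push-forward of normalized Lebesgue measure on $I_j$ by $h$ is close to the uniform measure on $g(\zeta_j,\t)$. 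Hence for any continuous $v$ on $\overline W$,
\begin{equation*}
 \int_{I_j} v\bigl(h(\E^{\imath t})\bigr)\,\frac{dt}{2\pi}
 = \frac{|I_j|}{2\pi}\int_0^{2\pi} v\bigl(g(\zeta_j,\E^{\imath s})\bigr)\,\frac{ds}{2\pi}+o(1)\qquad (N\to\infty),
\end{equation*}
uniformly in $j$, with $o(1)$ depending only on the modulus of continuity of $v$. Summing over those $j$ with $I_j\subset I$ (at most two arcs meet $bI$ and contribute $O(1/N)$) yields a Riemann sum for the continuous function $\zeta\mapsto\int_0^{2\pi} u_k(g(\zeta,\E^{\imath s}))\,ds/(2\pi)$ on $I$, which converges as $N\to\infty$ to the double integral on the right of \eqref{small-increase}. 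Choosing $N$ large enough and combining with the first step completes the proof.

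The main obstacle will be making the ``uniform winding'' statement quantitative and, crucially, checking that it remains compatible with the perturbations used to enforce (i)--(iii): the parameters $N$, $r'$ and the size of the final correction must be chosen in the correct order so that improving the integral estimate does not destroy the sup-norm conclusions. In the classical holomorphic setting this bookkeeping is already carried out in \cite{DF2012}, and I expect the null-constrained version to go through by the same scheme, with the Mergelyan approximation in $\Agot$ from \cite{AF2013} replacing standard polynomial approximation.
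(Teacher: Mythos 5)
Your proposal takes essentially the same route as the paper, which also defers (i)--(iii) entirely to Alarc\'on--Forstneri\v c \cite[Lemma 3.1]{AF2013} and merely remarks that the estimate \eqref{small-increase} can be superimposed ``as in \cite{DF2012}'', giving no further argument. Your sketch of that mechanism---truncation of $u$ from above by continuous functions, the rapid and nearly uniform winding of the boundary values of the Riemann--Hilbert disc around the target circles on each of the $N$ small arcs, and the resulting Riemann-sum convergence to the double integral---correctly identifies the content of the \cite{DF2012} argument that the paper implicitly invokes, and your caveat about ordering the choices of $N$, $r'$ and the nullity-enforcing correction is exactly the bookkeeping that a full write-up would need to carry out.
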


The following proposition is \cite[Lemma 2.1]{Edgar} and \cite[Proposition II.1]{Bu-Schachermayer}
for the case of null plurisubharmonic functions.

%
%
%
%
\begin{proposition}\label{prop:BS}
Let $\Omega$ be a domain in $\c^n$ and $\phi\colon \Omega \to \r\cup\{-\infty\}$ be 
an upper semicontinuous function.  Define $u_1=\phi$ and for $j > 1$
\begin{equation}
	u_j(z) = \inf \left\{\int^{2\pi}_0 u_{j-1} 
	\bigl( z+\E^{\imath t}\theta\bigr) \, \frac{dt}{2\pi}\right\}, \quad z\in \Omega,
\label{BSequation}
\end{equation}
where the infimum is taken over all vectors $\theta\in\Agot^*$ such that $\{z +\zeta\theta: |\zeta| \le 1 \}\subset \Omega$.
Then the functions $u_j$ are upper semicontinuous and decrease pointwise to the largest null plurisubharmonic function 
$u_\phi$ on $\Omega$ bounded above by $\phi$.
\label{propo_BS}
\end{proposition}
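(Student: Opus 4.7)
My plan is to mirror the Bu--Schachermayer iteration \cite{Bu-Schachermayer} in the null setting, reducing the proof to five items: (i) each $u_j$ is upper semicontinuous; (ii) $u_{j+1}\le u_j$; (iii) the pointwise limit $u_\phi = \lim_j u_j$ is upper semicontinuous; (iv) $u_\phi\in \NPsh(\Omega)$; and (v) $u_\phi$ dominates every null plurisubharmonic minorant of $\phi$. Once (ii) is established, $u_j\le\phi$ for every $j$, so all integrals below are bounded above on compact subsets of $\Omega$, making the usual monotone-convergence/reverse-Fatou arguments legitimate.

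For (i) I would proceed by induction on $j$, the case $j=1$ being the hypothesis on $\phi$. For the inductive step, fix $z_0\in\Omega$ and $M>u_j(z_0)$, and choose $\theta\in\Agot^*$ with $\{z_0+\zeta\theta:|\zeta|\le 1\}\Subset\Omega$ so that the defining integral is $<M$. For $w$ in a small neighbourhood of $z_0$ the same $\theta$ remains admissible, and the upper semicontinuity together with the local upper-boundedness of $u_{j-1}$ on the (compact) translated disc, combined with reverse Fatou, yields
\[
	\limsup_{w\to z_0}\int_0^{2\pi} u_{j-1}(w+\E^{\imath t}\theta)\,\frac{dt}{2\pi}\le\int_0^{2\pi} u_{j-1}(z_0+\E^{\imath t}\theta)\,\frac{dt}{2\pi}<M,
\]
so $u_j(w)<M$ near $z_0$. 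For (ii), given $z\in\Omega$ and $\epsilon>0$, upper semicontinuity of $u_{j-1}$ at $z$ furnishes a neighbourhood $V$ of $z$ on which $u_{j-1}<u_{j-1}(z)+\epsilon$; any $\theta\in\Agot^*$ of sufficiently small norm satisfies $\{z+\zeta\theta:|\zeta|\le 1\}\subset V$, so $u_j(z)\le u_{j-1}(z)+\epsilon$, and $\epsilon\to 0$ finishes (ii). Claim (iii) is then immediate, since a pointwise decreasing limit of upper semicontinuous functions is upper semicontinuous.

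For (iv), fix $z\in\Omega$ and an admissible $\theta\in\Agot^*$. The sequence $u_{j-1}(z+\E^{\imath t}\theta)$ is decreasing in $j$ with a common upper bound inherited from $\phi$, so monotone convergence applied to $u_j(z)\le\int_0^{2\pi} u_{j-1}(z+\E^{\imath t}\theta)\,dt/(2\pi)$ yields
\[
	u_\phi(z)\le \int_0^{2\pi} u_\phi(z+\E^{\imath t}\theta)\,\frac{dt}{2\pi}.
\]
Replacing $\theta$ by $r\theta$ (which remains null) and translating the centre along the null line $z+\c\theta$ upgrades this to the submean value property on every closed disc in $\Omega\cap(z+\c\theta)$; combined with (iii) this shows that $u_\phi|_{z+\c\theta}$ is subharmonic, hence $u_\phi\in\NPsh(\Omega)$. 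For (v), any $v\in\NPsh(\Omega)$ with $v\le\phi=u_1$ satisfies $v(z)\le\int_0^{2\pi} v(z+\E^{\imath t}\theta)\,dt/(2\pi)\le\int_0^{2\pi} u_1(z+\E^{\imath t}\theta)\,dt/(2\pi)$ for every admissible $\theta$, so taking the infimum gives $v\le u_2$; an immediate induction then gives $v\le u_j$ for all $j$, whence $v\le u_\phi$.

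The main obstacle is (i): the infimum runs over a non-compact parameter space of null vectors, so the naive ``$\inf$ of upper semicontinuous'' argument fails in general. The remedy is that the definition of $u_j(z_0)<M$ is witnessed by a single $\theta$ rather than a limiting sequence, and the openness in $z$ of the admissibility condition $\{z+\zeta\theta:|\zeta|\le 1\}\Subset\Omega$ then propagates the bound to nearby points via the reverse Fatou estimate displayed above. All other steps are routine adaptations of the plurisubharmonic case treated in \cite{Bu-Schachermayer}.
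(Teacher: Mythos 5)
Your proof is correct and follows essentially the same route as the paper's (which adapts Bu--Schachermayer's Proposition II.1): Fatou for upper semicontinuity of each $u_j$, Beppo Levi for the submean value inequality of the limit, and the obvious induction for maximality. The only real difference is that you spell out the monotonicity $u_{j+1}\le u_j$ via the small-$\theta$ argument and explain how the circle-averaged inequality upgrades to subharmonicity on each null line, whereas the paper declares the former ``obvious'' and leaves the latter implicit.
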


\begin{proof}
We follow the proof of \cite[Proposition II.1]{Bu-Schachermayer}.
We first show by induction that the functions $u_j$ (\ref{BSequation}) are upper semicontinuous. 
Assume that $j>1$ and that $u_{j-1}$ is upper semicontinuous 
(this holds when $j=2$). Choose a sequence $z_k$ in $\Omega$ converging to 
$z_0\in \Omega$ and a null vector $\theta \in\Agot^*$
such that $z_0+\cd\theta =\{z_0+ \zeta\theta: |\zeta| \le 1\}\subset \Omega$.
For $k_0\in \n$ big enough the set $U=\bigcup_{k=k_0}^\infty (z_k+\cd\theta)$ is relatively compact in $\Omega$.  
Then $u_{j-1}$ is bounded above on $U$ and satisfies 
$u_{j-1}(z_0+\zeta \theta)\ge \limsup_{k\to \infty} u_{j-1}(z_k+\zeta \theta)$ 
for all $\zeta \in \cd$. Fatou's lemma implies
\[
	\int^{2\pi}_0 u_{j-1} \bigl( z_0+\E^{\imath t}\theta\bigr) \, \frac{dt}{2\pi}\ge
	\limsup_{k\to \infty}\int^{2\pi}_0 u_{j-1} 
	\bigl( z_k+\E^{\imath t}\theta\bigr) \, \frac{dt}{2\pi}\ge \limsup_{k\to \infty} u_j(z_k).
\]
Taking the infimum over all null vectors $\theta$ we get $u_j(z_0)\ge\limsup_{k\to \infty} u_j(z_k)$.
Therefore $u_j$ is upper semicontinuous which concludes the inductive step.
The sequence $u_j$ is obviously pointwise decreasing, so the limit function $u_\phi$ is also upper semicontinuous. 

To show that $u_\phi$ is null plurisubharmonic, pick a point $z\in \Omega$ and a 
null vector $\theta \in\Agot^*$ such that $z+\cd\,\theta\subset \Omega$. 
It follows from Beppo Levi monotone convergence theorem that
\[
	u_\phi(z)=\lim_{j\to\infty} u_j(z)\le \lim_{j\to\infty}\int^{2\pi}_0 u_{j-1} 
	\bigl( z+\E^{\imath t}\theta\bigr) \, \frac{dt}{2\pi}=
	\int^{2\pi}_0 u_\phi \bigl(z+\E^{\imath t}\theta\bigr) \, \frac{dt}{2\pi}.
\]

It remains to prove that $u_\phi$ is the largest null plurisubharmonic function dominated by $\phi$. 
Choose a null plurisubharmonic function $v\le \phi$. We show by induction that $v\le u_j$ 
for every $j\in\n$; clearly  this will imply that $v\le u_\phi$.
Suppose that $v\le u_{j}$ for some $j\in\n$; this trivially holds for $j=1$ since $u_1=\phi$. 
For every point $z\in\Omega$ and every null vector 
$\theta \in\Agot^*$ such that $z+\cd\,\theta\subset \Omega$ we then have that
\[
	v(z)\le \int^{2\pi}_0 v \bigl( z+\E^{\imath t}\theta\bigr) \, \frac{dt}{2\pi}
	\le \int^{2\pi}_0 u_{j} \bigl( z+\E^{\imath t}\theta\bigr) \, \frac{dt}{2\pi}.
\]
Taking infimum over  all $\theta$ we get $v(z)\le u_{j+1}(z)$ which concludes the inductive step.
\end{proof}

Given a point $z \in \Omega$ we define $\Ngot(\d,\Omega,z)=\{f \in \Ngot(\d,\Omega): f(0)=z\}$. 
We are now ready to prove the following central result of this section.

%
%
%
%
\begin{theorem} [\bf Null plurisubharmonic minorant]
\label{th:nullpshminorant} 
Let $\phi\colon \Omega\to \r\cup\{-\infty\}$ be an upper semicontinuous 
function on a domain $\Omega\subset\c^3$. Then the function 
\begin{equation}
\label{eq:nullPoisson-funct}
   u(z) = \inf \Big\{\int^{2\pi}_0 \phi(f(\E^{\imath t}))\, 
   \frac{dt}{2\pi} \colon \ f\in \Ngot(\d,\Omega,z) \Big\}, \quad z\in \Omega,
\end{equation}
is null plurisubharmonic on $\Omega$ or identically $-\infty$; 
moreover, $u$ is the supremum of the null plurisubharmonic 
functions on $\Omega$ which are not greater than $\phi$. 
\end{theorem}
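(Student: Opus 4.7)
My plan is to show $u = u_\phi$, where $u_\phi$ is the largest null plurisubharmonic minorant of $\phi$ produced by Proposition \ref{propo_BS}; both conclusions of the theorem (including the case $u \equiv -\infty$) follow immediately from this identification.

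First I would prove $u_\phi \le u$. Fix any $v\in\NPsh(\Omega)$ with $v\le\phi$. For every $f\in\Ngot(\d,\Omega,z)$, Proposition \ref{lem:smoothing}(iii) gives that $v\circ f$ is subharmonic on $\cd$, so
\[
   v(z) = v(f(0)) \le \int_0^{2\pi} v(f(\E^{\imath t}))\,\frac{dt}{2\pi}
   \le \int_0^{2\pi} \phi(f(\E^{\imath t}))\,\frac{dt}{2\pi}.
\]
Taking the infimum over $f$ yields $v(z)\le u(z)$, and since $u_\phi$ is by Proposition \ref{propo_BS} the supremum of such $v$, we get $u_\phi\le u$.

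For the reverse inequality, I would invoke Proposition \ref{propo_BS} to identify $u_\phi=\lim_j u_j$ and then prove by induction on $j$ that $u\le u_j$ for every $j\ge 1$. The base case $u\le u_1=\phi$ follows by testing against the linear null discs $\zeta\mapsto z+\delta\zeta\theta_0$ for a fixed $\theta_0\in\Agot^*$ and small $\delta>0$; letting $\delta\to 0^+$, upper semicontinuity of $\phi$ together with the reverse Fatou lemma (justified because $\phi$ is bounded above on a neighborhood of $z$) gives $u(z)\le\phi(z)$. For the inductive step, assume $u\le u_{j-1}$ on $\Omega$; fix $z\in\Omega$, a null vector $\theta\in\Agot^*$ with $z+\cd\,\theta\subset\Omega$, and $\epsilon>0$. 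I would construct an immersed null disc $h\in\Ngot(\d,\Omega,z)$ satisfying
\[
   \int_0^{2\pi} \phi(h(\E^{\imath t}))\,\frac{dt}{2\pi}
   \le \int_0^{2\pi} u_{j-1}(z+\E^{\imath t}\theta)\,\frac{dt}{2\pi}+\epsilon,
\]
whence $u(z)\le u_j(z)$ upon infimizing over $\theta$ and sending $\epsilon\to 0$. The construction uses the central linear null disc $f(\zeta)=z+\zeta\theta$ as backbone and repeatedly applies Lemma \ref{prop:RHnull} to attach small linear null discs along its boundary, choosing the attached direction $\theta'$ and modulus $\mu$ so that the arc-localized estimate \eqref{small-increase} realizes the iterate $u_{j-1}$ up to arbitrarily small error.

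The main obstacle is that Lemma \ref{prop:RHnull} permits only linear attached discs in a single fixed null direction, whereas the iterated infimum defining $u_{j-1}$ generally requires the near-optimal direction to depend measurably on the boundary parameter (and, at deeper levels, on previously chosen parameters). To handle this I would discretize: approximate each parameter-dependent near-optimal direction by a piecewise-constant function on a fine partition of $\t$, and apply Lemma \ref{prop:RHnull} in succession on each arc using \eqref{small-increase} with $\epsilon$ allocated across the arcs, keeping each intermediate map in $\Ngot(\d,\Omega,z)$ via property (iii) of that lemma. Bookkeeping the cumulative approximation error---both across the arc partition at a single level and across the $j-1$ levels of depth---and using upper semicontinuity of the iterates $u_k$ from Proposition \ref{propo_BS} to pass from measurable to piecewise-constant near-optimal selections, will be the delicate technical work.
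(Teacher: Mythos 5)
Your proposal is correct and follows essentially the same route as the paper: first establish $u_\phi\le u$ from the sub-mean-value property of null plurisubharmonic functions along null discs, then obtain $u\le u_\phi$ by descending through the Bu--Schachermayer iterates $u_j$ via repeated, arc-localized applications of Lemma \ref{prop:RHnull} with accumulated $\epsilon$-errors. The only cosmetic difference is the induction wrapper: the hypothesis $u\le u_{j-1}$ is never actually invoked in your inductive step (your construction of $h$ descends from $u_{j-1}$ directly down to $\phi=u_1$ using only the defining recursion \eqref{BSequation} and Lemma \ref{prop:RHnull}), so the argument is really the paper's fix-$n$-large-and-descend scheme rephrased, and could be stated without induction at all.
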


\begin{proof} 
Proposition \ref{propo_BS} furnishes a decreasing sequence of upper semicontinuous 
functions $u_n$  on $\Omega$ which converges pointwise to the largest  null 
plurisubharmonic function $u_\phi$ on $\Omega$ dominated by $\phi$. 
To conclude the proof we need to show that
\[ 
	u_\phi(z)=\inf \Big\{\int^{2\pi}_0\phi(f(\E^{\imath t}))\, 
       \frac{dt}{2\pi} \colon \ f\in \Ngot(\d,\Omega,z) \Big\}, \quad z\in \Omega.
\]
We denote the right hand side of the above equation by $u(z)$.
   
Since $u_\phi$ is a null plurisubharmonic function on $\Omega$
dominated by $\phi$, Proposition \ref{komp_psh_disc} gives the following estimate 
for any $f\in \Ngot(\d,\Omega,z)$:
\[
	u_\phi(z)\le \int^{2\pi}_0 u_\phi(f(\E^{\imath t}))\, \frac{dt}{2\pi}
	\le \int^{2\pi}_0 \phi(f(\E^{\imath t}))\,    \frac{dt}{2\pi}
\]
Taking the infimum over all such $f$ we obtain $u_\phi\le  u$ on $\Omega$.

To prove the reverse inequality, fix a point $z\in \Omega$ and choose a number $\epsilon>0$.
Since $u_n(z)$ decreases to $u_\phi(z)$ as $n\to\infty$,  there is a positive integer $n$ so large that 
\begin{equation}\label{eq:0step}
	u_\phi(z)\le u_n(z)< u_\phi(z)+ \epsilon .
\end{equation}
By (\ref{BSequation}) there exists a null vector $\theta\in \Agot^*$
such that the null disc $f_{n-1}(\zeta)= z+\zeta \theta$ $(\zeta\in\cd)$ lies in $\Omega$ and satisfies
\begin{equation}\label{eq:1step}
	\int^{2\pi}_0 u_{n-1} \left(f_{n-1}(\E^{\imath t})\right) \, \frac{dt}{2\pi}
	\le u_n(z)+\frac \epsilon {n}.
\end{equation}
Fix a point $\E^{\imath t_0} \in \t$. By the definition of $u_{n-1}$ (\ref{BSequation}) there exists 
a null vector $\theta_{t_0}\in \Agot^*$
such that the null disc $\cd\ni \zeta \mapsto f_{n-1}(\E^{\imath t_0})+\zeta \theta_{t_0}$ 
lies in $\Omega$ and satisfies
\begin{equation}
\label{eq:suboptimal}
	\int^{2\pi}_0 u_{n-2} \left(f_{n-1}(\E^{\imath t_0})+
	\E^{\imath t}\theta_{t_0}\right) \, 
	\frac{dt}{2\pi}\le u_{n-1}(f_{n-1}(\E^{\imath t_0}))+\frac \epsilon {4n}.
\end{equation}
Setting $g_{n-1}(\E^{\imath s},\zeta)=f_{n-1}(\E^{\imath s})+\zeta\theta_{t_0}$,  
it follows from (\ref{eq:suboptimal}) that  there is a small arc $I\subset \t$ 
around the point $\E^{\imath t_0}$ such that
\[
      \int^{2\pi}_0 \!\! \int_{I} 
       u_{n-2}\bigl(g_{n-1}(\E^{\imath s},\E^{\imath t})\bigr) \, \frac{ds}{2\pi} \frac{d t}{2\pi} 
       \le 
       \int_I u_{n-1}\bigl(f_{n-1}(\E^{\imath s})\bigr)\,\frac{ds}{2\pi} + 
       \frac{|I|}{2\pi}\frac{\epsilon}{3n}.     
\]

By repeating this construction at other points of $\t$
we find finitely many closed arcs $I''_j \subset \t$ $(j=1,\ldots,l)$
such that  $\bigcup_{j=1}^l I''_j=\t$.
The function $u_{n-2}$ is bounded above by some constant $M$ on 
$\bigcup_{j=1}^l\bigcup_{\zeta\in I''_j} (f_{n-1}(\zeta)+\cd\, \theta_{t_j})$. 
We can choose smaller arcs $I_j\Subset I'_j\Subset I''_j$ $(j=1,\ldots,l)$
such that $\overline I'_j \cap \overline I'_k=\emptyset$ if $j\ne k$ and
the set $E=\t\setminus \bigcup_{j=1}^l I_j$ has 
arbitrarily small measure $|E|$, for example less than $\frac{\epsilon}{2nM}$, and 
smooth families of affine null discs $g_{n-1}(\zeta,\xi)=f_{n-1}(\zeta)+\xi \theta_{t_j}$ 
for $(\zeta,\xi )\in I'_j\times \cd$ such that
\begin{equation}
\label{est1}
   \int^{2\pi}_0 \!\! \int_{I_j} 
       u_{n-2}\bigl(g_{n-1}(\E^{\imath s},\E^{\imath t})\bigr) \, 
      \frac{ds}{2\pi} \frac{dt}{2\pi} 
  \le \int_{I_j} u_{n-1}\bigl(f_{n-1}(\E^{\imath s})\bigr)\,
   \frac{ds}{2\pi}
    + \frac{|I|}{2\pi}\frac{\epsilon}{2n}.
\end{equation}
Let $\chi\colon\t\to [0,1]$ be a smooth function such that $\chi\equiv 1$ on $\bigcup_{j=1}^l I_j$ 
and $\chi\equiv 0$ on a neighborhood of the set $\t\setminus \bigcup_{j=1}^l I'_j$.
Define the map $h_{n-1}\colon\t\times\cd\to\c^3$ by
\[
	h_{n-1}(\zeta,\xi)=g_{n-1}(\zeta,\chi(\zeta) \xi), \quad  (\zeta,\xi)\in \t\times \cd.
\]
By Lemma \ref{prop:RHnull} we get a null disc $f_{n-2}$ centered at $z$ and satisfying
\[
	\int^{2\pi}_0 u_{n-2} \left(f_{n-2}(\E^{\imath t})\right) \, \frac{dt}{2\pi}
	\le  \int^{2\pi}_0 u_{n-1}\bigl(f_{n-1}(\E^{\imath t})\bigr)\, \frac{dt}{2\pi}
        + \frac{\epsilon}{n}.
\]
(We apply Lemma \ref{prop:RHnull} $l$ times, once for each of the segments $I_1,\ldots, I_l$.)

Repeating this procedure we get null discs
$f_{1},f_2,\ldots,f_{n-1}$ in $\Omega$,  centered at $z$, such that
for $k=1,\ldots,n-2$ we have 
\[
	\int^{2\pi}_0 u_{k} \left(f_{k}(\E^{\imath t})\right) \, \frac{dt}{2\pi}\le  
	\int^{2\pi}_0 u_{k+1}\bigl(f_{k+1}(\E^{\imath t})\bigr)\, \frac{dt}{2\pi}
    + \frac{\epsilon}{n}.
\]
Since $u_1=\phi$, we get  by \eqref{eq:0step}  and \eqref{eq:1step} that
\[
	\int^{2\pi}_0 \! \phi \left(f_{1}(\E^{\imath t})\right) \, \frac{dt}{2\pi}\le
	\int^{2\pi}_0 u_{n-1}\bigl(f_{n-1}(\E^{\imath t})\bigr)\, \frac{dt}{2\pi}
        + \frac{(n-2)\epsilon}{n}\le u_n(z)+ \epsilon\le  u_\phi(z)+ 2\epsilon.
\]
Therefore $u (z)\le u_\phi(z)+ 2\epsilon$. Since this holds for any 
given $\epsilon >0$, we get $u(z)\le u_\phi(z)$. 
This completes the proof of Theorem \ref{th:nullpshminorant}.
\end{proof}

%
%
%
%

\section{Null hulls of compact sets in $\c^3$}
\label{sec:nullhull}

In the section we introduce the {\em null hull} of a compact set in 
$\c^n$. This is a special case of {\em $\mathbb G$-convex hulls} introduced by 
Harvey and Lawson in \cite[Definition 4.3, p.\ 2434]{Harvey-Lawson2012}.

%
%
%
%
\begin{definition} \label{def:nullhull}
Let $K$ be a compact set in $\c^n$ $(n\ge 3)$. The {\em null hull}  of $K$ is defined by
\begin{equation}\label{eq:nullhull}
		\wh K_{\Ngot} =\{z\in \c^n: v(z)\le \max_K v\ \ \ \forall v\in \NPsh(\c^n)\}.
\end{equation}  
\end{definition}

The maximum principle for subharmonic functions implies 
that for any bounded null holomorphic curve $A\subset \c^n$ with boundary $bA\subset K$ 
we have $A\subset \wh K_{\Ngot}$. 

Since $\Psh(\c^n)\subset \NPsh(\c^n)$, we have 
\begin{equation}\label{eq:comparehulls}
	K\subset   \wh K_{\Ngot}\subset  \wh K  \subset \Co(K).
\end{equation}
The polynomial hull $\wh K$ is  rarely equal to the convex hull $\Co(K)$ of $K$.
The following example shows that in general we also have $\wh K_{\Ngot}\ne \wh K$.

%
%
%
%
\begin{example}\label{ex:hulls}
Let $K=\{(0,0,\E^{\imath t}):t\in\r\}$, the unit circle in the $z_3$-axis.
Clearly $\wh K=\{(0,0,\zeta): |\zeta|\le 1\}$. However, since the function 
$u(z_1,z_2,z_3)=|z_1|^2+|z_2|^2-|z_3|^2$ is null  plurisubharmonic 
(cf.\ Example \ref{ex:npsh}) and it equals $-|z_3|^2$ on the coordinate 
axis $\{(0,0)\}\times\c$, we see that $\wh K_{\Ngot}=K$.
(See Theorem \ref{th:NHcurves} for a more general result.)
\end{example}

The following lemma is an immediate consequence of the inclusion $\wh K_{\Ngot}\subset  \wh K$
(\ref{eq:comparehulls}), the standard fact that $\wh K\subset\Omega$ for any compact 
set $K$ in a pseudoconvex Runge domain $\Omega\subset\c^n$ 
\cite[Theorem 2.7.3]{Hormander-SCV}, and of Proposition \ref{lem:smoothing}-(iv)
which shows that the restriction map $\NPsh(\c^n) \to \NPsh(\Omega)$ has 
dense image.

%
%
%
%
\begin{lemma}\label{lem:Nhull-Runge}
If $\Omega$ is pseudoconvex Runge domain in $\c^n$, then for any compact
set $K$ in $\Omega$ we have $\wh K_{\Ngot}\subset\Omega$ and 
\begin{equation}\label{eq:nullhull-psc}
		\wh K_{\Ngot} =\{z\in \Omega: v(z)\le \max_K v\ \ \ \forall v\in \NPsh(\Omega)\}.
\end{equation}  
\end{lemma}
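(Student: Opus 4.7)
The plan is to assemble the identity from the three ingredients highlighted in the excerpt: the trivial inclusion $\Psh(\c^n) \subset \NPsh(\c^n)$ which underlies \eqref{eq:comparehulls}, H\"ormander's theorem that a pseudoconvex Runge domain contains the polynomial hull of any of its compact subsets, and the extension statement in Proposition~\ref{lem:smoothing}(iv). First I would settle the inclusion $\wh K_\Ngot \subset \Omega$ by chaining $\wh K_\Ngot \subset \wh K \subset \Omega$ from \eqref{eq:comparehulls} and \cite[Theorem~2.7.3]{Hormander-SCV}; this reduces the remaining task to comparing two a priori distinct notions of the hull inside $\Omega$.

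Denote by $\tilde K$ the right-hand side of \eqref{eq:nullhull-psc}. The inclusion $\tilde K \subset \wh K_\Ngot$ is essentially tautological: for any $w \in \NPsh(\c^n)$ the restriction $w|_\Omega$ belongs to $\NPsh(\Omega)$, so if $z \in \tilde K$ then $w(z) = (w|_\Omega)(z) \le \max_K w|_\Omega = \max_K w$ is immediate, and hence $z \in \wh K_\Ngot$.

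The only direction carrying real content is $\wh K_\Ngot \subset \tilde K$, and the key move is to apply Proposition~\ref{lem:smoothing}(iv) to the enlarged compact set $K' := K \cup \{z\}$, which lies in $\Omega$ precisely because $z \in \Omega$ by the first step. Given any test function $u \in \NPsh(\Omega)$, that proposition produces $v \in \NPsh(\c^n)$ with $v|_{K'} = u|_{K'}$, so $v(z) = u(z)$ and $\max_K v = \max_K u$; the defining inequality $v(z) \le \max_K v$ for $z \in \wh K_\Ngot$ then transfers to $u(z) \le \max_K u$, as required. I do not anticipate any serious obstacle: the only subtle point, and the one to be careful about, is to include $z$ itself in the compact set on which the extension agrees with $u$, so that the hull condition at $z$ survives the passage from $\NPsh(\c^n)$ to $\NPsh(\Omega)$.
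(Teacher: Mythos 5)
Your proposal is correct and follows the same route the paper outlines: the chain $\wh K_\Ngot \subset \wh K \subset \Omega$ via \eqref{eq:comparehulls} and H\"ormander's theorem, and Proposition~\ref{lem:smoothing}-(iv) to extend functions from $\NPsh(\Omega)$ to $\NPsh(\c^n)$ with agreement on a compact set. Your observation that one should apply the extension to $K \cup \{z\}$ so that both $v(z)=u(z)$ and $\max_K v = \max_K u$ hold is exactly the right way to make the paper's phrase ``the restriction map has dense image'' do the needed work.
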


The following result is proved by following the standard case of plurisubharmonic 
functions (see \cite[Theorem 2.6.11, p.\ 48]{Hormander-SCV} or
\cite[Theorem 1.3.8, p.\ 25]{Stout-convexity}).

\begin{proposition}\label{prop:exhaustion}
Given a compact null convex set $K=\wh K_\Ngot\subset \c^n$ and an open set $U\supset K$, 
there exists a smooth null plurisubharmonic exhaustion function $\rho\colon\c^n\to\r_+$ such that
$\rho=0$ on a neighborhood of $K$ and $\rho$ is positive strongly null plurisubharmonic 
on $\c^n\setminus U$.
\end{proposition}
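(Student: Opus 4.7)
The proposition is the null analogue of the classical construction of a smooth plurisubharmonic exhaustion vanishing near a plurisubharmonically convex set (see Hörmander \cite[Thm.~2.6.11]{Hormander-SCV} or Stout \cite[Thm.~1.3.8]{Stout-convexity}). The strategy is therefore to adapt that proof, replacing each application of plurisubharmonicity by its null analogue, using the tools developed in Section~\ref{sec:null}: the smoothing in Proposition~\ref{lem:smoothing}(ii), the extension trick in Proposition~\ref{lem:smoothing}(iv), Proposition~\ref{lemma-nullpsh1}(i)--(ii), and the fact that $|z|^2$ is strongly null plurisubharmonic. Since shrinking $U$ only strengthens the conclusion, I may replace $U$ by a smaller open neighborhood of $K$, and hence assume without loss of generality that $U\Subset\c^n$.

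\textbf{Pointwise separation.} Fix $z_0\in\c^n\setminus K$. Since $K=\wh K_\Ngot$, there exists $v\in\NPsh(\c^n)$ with $v(z_0)>\sup_K v$; normalize so that $\sup_K v<-2$ and $v(z_0)>0$. Choose a ball $B\Subset\c^n$ containing $K\cup\{z_0\}$ and smooth $v$ on $B$ via Proposition~\ref{lem:smoothing}(ii) to obtain a smooth $v^\delta\in\NPsh(B)$ with $v^\delta(z_0)>0$ and $v^\delta<-1$ on a neighborhood of $K$. Fix $r_1$ with $K\subset\{|z|<r_1\}$ and set $\hat v(z)=v^\delta(z)+\alpha(|z|^2-r_1^2)$ for $\alpha>0$ small enough that $\hat v(z_0)>0$ and $\hat v<-\tfrac12$ near $K$; then $\hat v$ is smooth and strongly null plurisubharmonic on $B$ by Proposition~\ref{lemma-nullpsh1}(i)--(ii). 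Let $\chi\colon\r\to\r_+$ be smooth, convex, nondecreasing, with $\chi\equiv 0$ on $(-\infty,-\tfrac12]$ and $\chi',\chi''>0$ on $(-\tfrac12,\infty)$, and set $\phi_{z_0}=\chi\circ\hat v$ on $B$. A direct Levi-form computation gives
\[
\Lcal_{\phi_{z_0}}(p;\theta)=\chi'(\hat v(p))\,\Lcal_{\hat v}(p;\theta)+\chi''(\hat v(p))\,|\partial\hat v(p)\cdot\theta|^2,
\]
which is strictly positive for every $\theta\in\Agot^*$ wherever $\phi_{z_0}>0$, so $\phi_{z_0}$ is smooth null plurisubharmonic on $B$, vanishes near $K$, is positive at $z_0$, and is strongly null plurisubharmonic on the open set $\{\phi_{z_0}>0\}$. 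Globalize $\phi_{z_0}$ to $\c^n$ by the construction in the proof of Proposition~\ref{lem:smoothing}(iv) (multiplying by a cutoff supported in $B$ and adding a large multiple of a smooth psh exhaustion that vanishes near $K\cup\{z_0\}$); the result, still denoted $\phi_{z_0}$, lies in $\NPsh(\c^n)\cap\Cscr^\infty(\c^n)$, vanishes on a neighborhood of $K$, and is positive and strongly null plurisubharmonic on an open neighborhood $W_{z_0}$ of $z_0$.

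\textbf{Countable sum and exhaustion.} The set $\c^n\setminus U$ is $\sigma$-compact, so from the open cover $\{W_{z_0}\}_{z_0\in\c^n\setminus U}$ I extract a countable subfamily $\{W_j\}$ still covering $\c^n\setminus U$. Pick positive constants $c_j$ tending to $0$ fast enough that $\rho_0=\sum_j c_j\phi_j$ converges in $\Cscr^\infty$ on every compact subset of $\c^n$ (standard: bound $c_j$ by $2^{-j}$ divided by the $\Cscr^j$-norm of $\phi_j$ on $B_j(0)$). Then $\rho_0$ is smooth, null plurisubharmonic by Proposition~\ref{lemma-nullpsh1}(i), vanishes on a neighborhood of $K$, and at each point $z\in\c^n\setminus U$ at least one $\phi_j(z)>0$ contributes a strictly positive Levi form on $\Agot^*$, so $\rho_0$ is positive and strongly null plurisubharmonic on $\c^n\setminus U$. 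Finally, choose $R_0$ with $\overline U\subset\{|z|<R_0\}$ and a smooth convex $\chi_2\colon\r\to\r_+$ with $\chi_2\equiv 0$ on $(-\infty,0]$, $\chi_2''>0$ and $\chi_2(t)\to\infty$ as $t\to\infty$; set $\rho_\infty(z)=\chi_2(|z|^2-R_0^2)$. This is smooth and plurisubharmonic (hence null plurisubharmonic), vanishes on a neighborhood of $\overline U$, and tends to $\infty$ at infinity. Define $\rho=\rho_0+\rho_\infty$; this is the required exhaustion.

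\textbf{Main obstacle.} The delicate step is the simultaneous achievement, at each $z_0$, of three incompatible-looking properties for $\phi_{z_0}$: smoothness and null plurisubharmonicity on all of $\c^n$, vanishing on a (uniform) neighborhood of $K$, and \emph{strong} null plurisubharmonicity wherever $\phi_{z_0}>0$. The mollification in Proposition~\ref{lem:smoothing}(ii) alone is not global, and a bare $|z|^2$ correction destroys vanishing near $K$; the fix is to add the $|z|^2$-term before composing with $\chi$, which converts strong null plurisubharmonicity of $\hat v$ into strong null plurisubharmonicity of $\chi\circ\hat v$ on the open set where $\chi'>0$, while $\chi$ itself enforces vanishing near $K$. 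Globalizing this local construction to $\c^n$ without destroying these properties is exactly what Proposition~\ref{lem:smoothing}(iv) allows, and everything else is a routine summation and a routine $|z|^2$-exhaustion tail.
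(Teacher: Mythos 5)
Your strategy follows the standard construction that the paper delegates to Hörmander and Stout, so the route is right in spirit, but there is a genuine gap in the final summation step. For each $z_0$ you obtain $\phi_{z_0}\in\NPsh(\c^n)\cap\Cscr^\infty$ vanishing on \emph{some} open neighborhood of $K$, but that neighborhood is $\{\hat v<-\tfrac12\}$ where $\hat v$ depends on $z_0$ (through the choice of the separating $v$, the smoothing radius $\delta$, the ball $B$, and $\alpha$). As $z_0$ runs over the countable family $\{z_j\}$, these neighborhoods need not contain any common open neighborhood of $K$, so $\rho_0=\sum_j c_j\phi_j$ is guaranteed to vanish only on $\bigcap_j\{\phi_j=0\}$, which a priori can equal $K$ and fail to be a neighborhood. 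Your ``Main obstacle'' paragraph does flag ``vanishing on a (uniform) neighborhood of $K$'' as one of the required properties, but nothing in the construction secures uniformity: $v^\delta<-1$ on ``a neighborhood of $K$'' and $\hat v<-\tfrac12$ ``near $K$'' are both $z_0$-dependent statements, and the assertion ``vanishes on a neighborhood of $K$'' for $\rho_0$ is exactly what does not follow.

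The fix stays inside your framework but needs one more idea. First show that there is a compact null-convex $L=\wh L_\Ngot$ with $K\subset\mathrm{int}\,L\subset L\subset U$: take $L_\epsilon=\wh{(\overline{V_\epsilon})}_\Ngot$ with $V_\epsilon=\{\dist(\cdot,K)<\epsilon\}$; these are compact (they sit inside the polynomial hull of $\overline{V_\epsilon}$), nested decreasing, and $\bigcap_\epsilon L_\epsilon=\wh K_\Ngot=K$ because $\sup_{\overline{V_\epsilon}}u\to\sup_K u$ for every $u\in\NPsh(\c^n)$; hence $L_\epsilon\subset U$ for $\epsilon$ small, and each contains $V_\epsilon\supset K$ in its interior. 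Now, for $z_0\notin U\supset L$, separate $z_0$ from $L$ rather than from $K$: choose $v\in\NPsh(\c^n)$ with $v(z_0)>\sup_L v$, normalize $\sup_L v<-2$. Pick once and for all a compact $L'$ with $K\subset\mathrm{int}\,L'\subset L'\subset\mathrm{int}\,L$. Since $v<-2$ on $\mathrm{int}\,L\supset L'$ and $L'$ is compact and fixed, for small $\delta$ the mollification satisfies $v^\delta<-1$ on $L'$, hence $\hat v<-\tfrac12$ on $L'$ for small $\alpha$ (also arrange $\overline{L'}\subset\{|z|<r_1\}$). Consequently each $\phi_{z_0}$ vanishes on the \emph{fixed} open set $\mathrm{int}\,L'\supset K$, the sum $\rho_0$ vanishes there as well, and the rest of your argument goes through. (Alternatively, one can mimic the cited classical proofs more literally: build a single smooth strongly null plurisubharmonic exhaustion $u$ with $u<0$ on $K$ and $u>0$ off a fixed neighborhood $\omega\Subset U$, then compose with $\chi$ once at the very end; there the same nested-hull lemma is what makes the single truncation vanish on an open set containing $K$.) Two minor points: $\chi_2''>0$ everywhere is incompatible with $\chi_2\equiv0$ on $(-\infty,0]$ (you want $\chi_2''>0$ only on $(0,\infty)$), and for the globalization step one should take the cutoff $\equiv1$ on a neighborhood of the \emph{polynomial} hull $\widehat{K\cup\{z_0\}}\subset B$ so that the added psh exhaustion is strongly psh precisely on the transition region.
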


Our next result, which is essentially a corollary to Theorem \ref{th:nullpshminorant},
characterizes the null hull of a compact set in $\c^3$ by Poletsky sequences of null holomorphic discs.

%
%
%
%
\begin{corollary}\label{cor:DD}
Let $K$ be a compact set in $\c^3$ and let  $\Omega\subset \c^3$ be a
bounded pseudoconvex Runge domain containing $K$.
A point $p\in \Omega$ belongs to the null hull $\widehat K_\Ngot$ of $K$ 
if and only if there exists a sequence of null holomorphic discs $f_j\in \Ngot(\d,\Omega,p)$ 
such that 
\begin{equation} \label{eq:Ndiscs}
	\big| \{t\in [0,2\pi]: \dist(f_j(e^{\imath t}),K)<1/j\}  \big| \ge 2\pi -1/j,
	\qquad j=1,2,\ldots.
\end{equation}
\end{corollary}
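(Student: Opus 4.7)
The statement is essentially a direct corollary of the Poletsky-type Theorem~\ref{th:nullpshminorant} coupled with Lemma~\ref{lem:Nhull-Runge}. My plan is to establish the two implications separately.

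For the necessity of the Poletsky condition, I would fix an arbitrary $v\in\NPsh(\c^n)$ and show $v(p)\le\max_K v$. Given any null disc $f_j\in\Ngot(\d,\Omega,p)$, Proposition~\ref{lem:smoothing}(iii) makes $v\circ f_j$ subharmonic on $\overline\d$, so the submean-value inequality yields
\[
	v(p) \le \int_0^{2\pi} v\bigl(f_j(\E^{\imath t})\bigr)\,\frac{dt}{2\pi}.
\]
I would then split this integral over the ``good'' set $G_j=\{t:\dist(f_j(\E^{\imath t}),K)<1/j\}$ and its complement. On $G_j$, the uniform upper semicontinuity of $v$ near the compact set $K$ gives $v(f_j(\E^{\imath t}))\le \max_K v+\eta$ once $j$ is large; on the complement, which has measure at most $1/j$, $v$ is bounded above by some constant $M$ because it is u.s.c.\ on the compact set $\overline\Omega$. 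Letting $j\to\infty$ and then $\eta\to 0$ yields $v(p)\le\max_K v$, hence $p\in\wh K_\Ngot$.

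For the sufficiency, I would assume $p\in\wh K_\Ngot$ and build the discs $f_j$ out of Theorem~\ref{th:nullpshminorant}. For each $\epsilon>0$ I define a continuous function $\phi_\epsilon\colon\c^3\to[-1,0]$ that equals $-1$ on the $\epsilon$-neighborhood $U_\epsilon$ of $K$ and vanishes outside $U_{2\epsilon}$, e.g.\
\[
	\phi_\epsilon(z)=-\max\bigl\{0,\,\min\{1,\,2-\dist(z,K)/\epsilon\}\bigr\}.
\]
Theorem~\ref{th:nullpshminorant} then provides
\[
	u_\epsilon(z)=\inf\Bigl\{\int_0^{2\pi}\phi_\epsilon(f(\E^{\imath t}))\,\frac{dt}{2\pi}:\ f\in\Ngot(\d,\Omega,z)\Bigr\}\in\NPsh(\Omega),
\]
with $u_\epsilon\le\phi_\epsilon$ and, since $\phi_\epsilon\ge-1$, also $u_\epsilon\ge-1$. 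For any $z\in K$ a sufficiently small linear null disc $\zeta\mapsto z+\zeta\theta$ (with $\theta\in\Agot^*$ tiny enough that its image stays in $U_\epsilon\cap\Omega$) forces $u_\epsilon(z)=-1$, hence $\max_K u_\epsilon=-1$. By Lemma~\ref{lem:Nhull-Runge} applied to $u_\epsilon$ I get $u_\epsilon(p)\le-1$, so for any $\delta>0$ there exists $f\in\Ngot(\d,\Omega,p)$ with average of $\phi_\epsilon\circ f$ at most $-1+\delta$. Since $\phi_\epsilon\in[-1,0]$ and vanishes off $U_{2\epsilon}$, the elementary bound
\[
	-1+\delta\ \ge\ -\frac{\bigl|\{t:f(\E^{\imath t})\in U_{2\epsilon}\}\bigr|}{2\pi}
\]
forces $\bigl|\{t:f(\E^{\imath t})\in U_{2\epsilon}\}\bigr|\ge 2\pi(1-\delta)$. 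Choosing $\epsilon=1/(2j)$ and $\delta<1/(2\pi j)$ extracts the required disc $f_j$.

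The substantive work is already packaged in Theorem~\ref{th:nullpshminorant}, whose engine is the Riemann--Hilbert approximation Lemma~\ref{prop:RHnull}; the present corollary only repackages it. Minor care is needed to exclude constant maps from $\Ngot(\d,\Omega)$ (handled by the linear null disc trick used to verify $u_\epsilon\equiv-1$ on $K$) and to invoke the uniform upper semicontinuity of $v$ near $K$ in the necessity direction, but both are routine.
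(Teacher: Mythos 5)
Your proof is correct and follows essentially the same route as the paper: submean-value inequality plus the split over the good/bad boundary set for necessity, and the extremal function from Theorem~\ref{th:nullpshminorant} (combined with Lemma~\ref{lem:Nhull-Runge}) for sufficiency. The only cosmetic difference is that you build a continuous tent function $\phi_\epsilon$ where the paper uses the indicator-type step function $\phi=-\mathbf{1}_U$, and you spell out the small-linear-null-disc argument for $u_\epsilon=-1$ on $K$ which the paper leaves implicit; both are equivalent.
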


\begin{proof}
We follow the case of polynomial hulls (cf.\ Theorem \ref{th:Poletsky}).
Since $\Omega$ is pseudoconvex and Runge in $\c^3$, we have 
$\widehat K_\Ngot \subset \widehat K\subset \Omega$, where the first inclusion
uses (\ref{eq:comparehulls}) and the second one is a standard result
\cite[Theorem 2.7.3]{Hormander-SCV}. 
Assume that for some point $p\in \Omega$ there exists a sequence $f_j\in \Ngot(\d,\Omega,p)$
satisfying (\ref{eq:Ndiscs}). Pick $u \in \NPsh(\c^3)$. 
Let $U_j=\{z\in\c^3: \dist(z,K)< 1/j\}$, $M_j=\sup_{U_j} u$, $M=\sup_{\Omega} u$, and 
$E_j= \{t\in [0,2\pi] \colon f_j(\E^{\imath t}) \notin U_j\}$.
Then $|E_j|\le 1/j$ by (\ref{eq:Ndiscs}). Since $u\circ f_j$ is subharmonic, we have
\[
	u(p) = u(f_j(0)) \le \int_{E_j} u(f_j(\E^{\imath t}))\,  \frac{dt}{2\pi} +
	\int_{[0,2\pi]\setminus E_j} u(f_j(\E^{\imath t}))\,  \frac{dt}{2\pi}
	\le M/j + M_j.
\]
Passing to the limit as $j\to\infty$ gives $u(p)\le \sup_K u$.
This shows that $p\in \widehat K_\Ngot$. 

To prove the converse, pick an open set $U$ in $\c^3$ with $K\subset U\Subset \Omega$. 
The function $\phi\colon \Omega\to\r$, which equals $-1$ on $U$ and equals $0$ 
on $\Omega\setminus U$, is upper semicontinuous. Let $u\in \NPsh(\Omega)$ be the associated 
extremal null plurisubharmonic function  (\ref{eq:nullPoisson-funct}). 
Then clearly  $-1\le u\le 0$ on $\Omega$, and $u=-1$ on $K$.
Hence Lemma \ref{lem:Nhull-Runge} implies
that $u(p)=-1$ for any point $p \in \wh K_{\Ngot}$. Fix such $p$ and pick
a number $\epsilon>0$. Theorem \ref{th:nullpshminorant} furnishes a null
disc $f\in \Ngot(\d,\Omega,p)$ such that 
$ 
	 \int^{2\pi}_0 \phi(f(\E^{\imath t}))\,  \frac{dt}{2\pi}< -1+ \epsilon/2\pi.
$ 
By the definition of $\phi$ this implies that
$|\{t\in [0,2\pi] \colon f(\E^{\imath t}) \in U\}| \ge 2\pi -\epsilon$. 
Applying this with the sequence of sets $U_j = \{z\in \c^3: \dist(z,K)<1/j\}$ and 
numbers $\epsilon_j=1/j$ gives Corollary \ref{cor:DD}.
\end{proof}

If we take $\Omega=\c^3$ then the first part of the proof of Corollary \ref{cor:DD}   fails since the sequence 
of discs $f_j$ need not be bounded, but the converse part still holds and gives the following  
observation which was pointed out by Rosay \cite{Rosay1,Rosay2} 
in the case of holomorphic discs -- we can find null discs with a given center $p$ 
and with most of their  boundaries squeezed in any given open set, possibly very small and 
far away from $p$. 

%
%
%
%
\begin{corollary}\label{cor:strangediscs}
Given a point $p\in \c^3$ and a nonempty open set $B\subset \c^3$, there exists 
a sequence of null holomorphic discs $f_j\in \Ngot(\d,\c^3,p)$ such that 
\begin{equation} \label{eq:strangediscs}
	\big| \{t\in [0,2\pi]: f_j(e^{\imath t}))\in B\}  \big| \ge 2\pi -1/j,
	\qquad j=1,2,\ldots.
\end{equation}
\end{corollary}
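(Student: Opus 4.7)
The plan is to adapt the second half of the proof of Corollary \ref{cor:DD}, using Theorem \ref{th:nullpshminorant} and the Liouville-type statement of Proposition \ref{lemma-nullpsh1}(v) to bypass the need for any polynomial-hull hypothesis. The key point is that, when the ambient domain is all of $\c^3$, bounded-above null plurisubharmonic functions are automatically constant, so the extremal function built from the characteristic function of $B$ must take the minimal value at every point, and in particular at $p$.

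Concretely, I would define the upper semicontinuous function $\phi\colon\c^3\to\r$ by $\phi\equiv -1$ on $B$ and $\phi\equiv 0$ on $\c^3\setminus B$ (upper semicontinuity is immediate because $B$ is open). Let $u\colon \c^3\to\r\cup\{-\infty\}$ be the function produced by Theorem \ref{th:nullpshminorant} applied with $\Omega=\c^3$, so
\[
   u(z) = \inf \Big\{\int^{2\pi}_0 \phi(f(\E^{\imath t}))\,
   \frac{dt}{2\pi} \colon \ f\in \Ngot(\d,\c^3,z) \Big\}, \quad z\in \c^3.
\]
Theorem \ref{th:nullpshminorant} asserts that $u\in\NPsh(\c^3)$ or $u\equiv-\infty$, and that $u$ is the largest null plurisubharmonic minorant of $\phi$. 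Since the constant $-1$ is null plurisubharmonic with $-1\le\phi$, we get $u\ge -1$; thus $u\not\equiv-\infty$ and $u\in\NPsh(\c^3)$. On the other hand, $u\le\phi$ gives $u\le 0$. Hence $u$ is bounded above on $\c^3$, and Proposition \ref{lemma-nullpsh1}(v) forces $u$ to be constant. Combining $u\le\phi=-1$ on the nonempty set $B$ with the lower bound $u\ge -1$ shows $u\equiv -1$ on $\c^3$; in particular $u(p)=-1$.

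Now fix $j\ge 1$ and set $\epsilon_j=1/(2\pi j)$. Since $u(p)=-1$ is an infimum, the definition of $u$ provides a null holomorphic disc $f_j\in\Ngot(\d,\c^3,p)$ with
\[
   \int^{2\pi}_0 \phi(f_j(\E^{\imath t}))\,\frac{dt}{2\pi} < u(p)+\epsilon_j = -1+\frac{1}{2\pi j}.
\]
Writing $E_j=\{t\in[0,2\pi]: f_j(\E^{\imath t})\in B\}$, the left-hand side equals $-|E_j|/(2\pi)$, so this inequality is exactly $|E_j|>2\pi-1/j$, which yields \eqref{eq:strangediscs}.

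I expect no genuine obstacle: the entire content is packaged in Theorem \ref{th:nullpshminorant} and in the Liouville property Proposition \ref{lemma-nullpsh1}(v), which is what makes the statement possible on the whole of $\c^3$ even though the first half of the proof of Corollary \ref{cor:DD} (producing a null plurisubharmonic function that separates $p$ from $K$) does break down. The only small point to verify carefully is the u.s.c.\ of $\phi$ and the fact that $\phi\ge -1$ globally, which gives the lower bound needed to conclude that $u$ is real-valued rather than $-\infty$.
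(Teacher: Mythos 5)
Your proof is correct and follows essentially the same route as the paper: the same choice of $\phi$, the same extremal function $u$ from Theorem \ref{th:nullpshminorant}, the same Liouville argument from Proposition \ref{lemma-nullpsh1}(v) to force $u\equiv-1$, and the same near-optimal-disc extraction as in the converse direction of Corollary \ref{cor:DD}. The only (welcome) addition is your explicit check that $u\ge -1$ rules out $u\equiv-\infty$, a point the paper leaves implicit.
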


\begin{proof}
Let $\phi\colon \c^3\to\r$ equal $-1$ on  $B$ and equal $0$ on
$\c^3\setminus B$, so $\phi$ is upper semicontinuous.
Let $u\in \NPsh(\c^3)$ be the associated extremal null plurisubharmonic function 
defined by (\ref{eq:nullPoisson-funct}). Then $u\le 0$ on $\c^3$ and 
$u=-1$ on $B$. It follows from Proposition \ref{lemma-nullpsh1}-(v) that 
$u$ is constant on $\c^3$, so $u(p)=-1$.  
The same argument as in the proof of Corollary \ref{cor:DD} gives a sequence $f_j\in \Ngot(\d,\c^3,p)$
satisfying (\ref{eq:strangediscs}).
\end{proof}

As a consequence of the Alexander-Stolzenberg-Wermer  theorem on polynomial
hulls of compact sets of finite length in $\c^n$ 
\cite{Alexander1971,Stolzenberg1966,Wermer1958},
we now obtain the following description of null hulls of such set.  

%
%
%
%
\begin{theorem}  \label{th:NHcurves}
If $\Gamma$ is a compact set in $\c^n$ $(n\ge 3)$ contained in a connected compact set of finite linear measure, then $\widehat \Gamma_\Ngot\setminus \Gamma$ is a complex null curve (or empty).
\end{theorem}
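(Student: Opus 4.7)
The plan is to combine the Alexander--Stolzenberg--Wermer theorem with the Poletsky-type characterization in Corollary \ref{cor:DD}, reducing the theorem to showing that the branches of the polynomial hull which survive in the null hull are automatically null.

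First, by \eqref{eq:comparehulls} we have $\widehat\Gamma_\Ngot\subset\widehat\Gamma$, and since $\Gamma$ is contained in a connected compactum of finite linear measure, the Alexander--Stolzenberg--Wermer theorem gives that $A:=\widehat\Gamma\setminus\Gamma$ is either empty or a purely one-dimensional closed complex analytic subvariety of $\c^n\setminus\Gamma$; in particular $\widehat\Gamma_\Ngot\setminus\Gamma\subset A$. Let $A=\bigcup_j A_j$ be its decomposition into irreducible components. The conclusion follows once we show that $\widehat\Gamma_\Ngot\setminus\Gamma$ is exactly the union of those $A_j$ whose normalization $\widetilde{A_j}\to A_j\subset\c^n$ is a null holomorphic map.

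The easy inclusion is ``null implies contained in the null hull'': if $A_j$ is null, Proposition \ref{lem:smoothing}(iii) shows that each $u\in\NPsh(\c^n)$ restricts to a subharmonic function on the bordered Riemann surface $\widetilde{A_j}$, and the maximum principle with boundary values in $\Gamma$ gives $A_j\subset\widehat\Gamma_\Ngot$. For the converse, fix $p\in\widehat\Gamma_\Ngot\setminus\Gamma$, let $A_j$ be an irreducible component of $A$ containing $p$, and choose a ball $B\supset\widehat\Gamma$; this is a bounded pseudoconvex Runge domain. Corollary \ref{cor:DD} produces null discs $f_k\in\Ngot(\d,B,p)$ with boundaries satisfying \eqref{eq:Ndiscs}. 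By Montel's theorem a subsequence converges uniformly on compact subsets of $\d$ to a holomorphic map $f\colon\d\to\overline B$ with $f(0)=p$; the identity \eqref{eq:Ndisc} persists under such limits, so $f$ is null. Smoothing each $u\in\NPsh(\c^n)$ via Proposition \ref{lem:smoothing}(ii) and applying the Poisson submean inequality for $u\circ f_k$ together with \eqref{eq:Ndiscs} shows that $u\circ f\le\max_\Gamma u$ on $\d$; hence $f(\d)\subset\widehat\Gamma_\Ngot$. Choosing a neighborhood $V$ of $p$ disjoint from $\Gamma$, we then have $f(\d)\cap V\subset A\cap V$. Assuming $f$ is non-constant, $f(\d)\cap V$ is a one-dimensional complex analytic subvariety of $V$ contained in $A\cap V$; it must coincide locally with the branch of $A_j$ at $p$. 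A Taylor expansion $f(\zeta)=p+\zeta^m v+O(\zeta^{m+1})$ with $v\neq 0$, combined with \eqref{eq:Ndisc}, forces $v\in\Agot$, so that branch of $A_j$ is tangent to a null direction. Analytic continuation of the nullity identity on the connected Riemann surface $\widetilde{A_j}$ then renders $A_j$ globally null.

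The principal obstacle is ensuring non-constancy of the limit $f$: \emph{a priori} the Poletsky null discs $f_k$, although pinned at $p$ by $f_k(0)=p$, could collapse to $p$ on compact subsets of $\d$ while their boundary mass escapes to $\Gamma$. I would handle this either by sharpening the Poletsky-type construction underlying Corollary \ref{cor:DD} to enforce a uniform lower bound on $\|f_k'(0)\|$ (running it with a functional $\phi$ that both localizes near $\Gamma$ and penalizes derivative smallness at $0$), or by the rescaling $\widetilde f_k(\zeta)=(f_k(\zeta)-p)/\|f_k'(0)\|$, which preserves nullity and, since $p$ lies on a positive-dimensional complex curve within $\widehat\Gamma_\Ngot$ rather than being isolated in it, produces a non-constant limit. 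The singular locus of $A$ is discrete, and the above argument applied at any $p\in\widehat\Gamma_\Ngot\setminus\Gamma$ (smooth or singular) shows that some branch of $A$ at $p$ is null, whence the containing irreducible component is globally null; so $\widehat\Gamma_\Ngot\setminus\Gamma$ is precisely the union of the null components of $A$, i.e., a (possibly empty) complex null curve.
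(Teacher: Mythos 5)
Your approach is genuinely different from the paper's. Rather than building, for each $p$ in the non-null part of $\widehat\Gamma\setminus\Gamma$, an explicit function in $\NPsh(\c^n)$ that separates $p$ from $\Gamma$ (the route the paper takes, via its Proposition \ref{lem:smoothing}(iv)), you run the Poletsky-type characterization of Corollary \ref{cor:DD} \emph{forward} at a point $p\in\widehat\Gamma_\Ngot\setminus\Gamma$, take a Montel limit of the resulting null discs, and try to exhibit a nontrivial null disc through $p$ inside the variety. The preparatory steps are fine: Alexander's theorem reduces everything to the variety $V:=\widehat\Gamma\setminus\Gamma$, the maximum principle gives the easy inclusion for null components, the nullity identity (\ref{eq:Ndisc}) does persist under locally uniform limits, and a non-constant null limit disc whose image lies in $V$ near $p$ must parametrize a branch of the containing component, whose normalization is then globally null by analytic continuation.

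The genuine gap, which you yourself flag, is that you never establish the non-constancy of the limit $f$, and neither of your two proposed fixes is carried out or plausible as stated. Nothing in the Poletsky-type construction forces a lower bound on $\|f_k'(0)\|$: given any sequence satisfying (\ref{eq:Ndiscs}), precomposing with inner functions keeps $f_k(0)=p$, keeps the boundary distribution essentially unchanged, and drives $f_k'(0)$ to $0$, so collapse is entirely consistent with the conclusion of Corollary \ref{cor:DD}. The rescaling $\tilde f_k=(f_k-p)/\|f_k'(0)\|$ destroys local boundedness when $\|f_k'(0)\|\to 0$, so Montel no longer applies, and in any case the rescaled maps no longer take values in a bounded Runge domain, so the argument placing the limit inside $\widehat\Gamma_\Ngot$ breaks. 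The justification ``since $p$ lies on a positive-dimensional complex curve within $\widehat\Gamma_\Ngot$'' is circular: that is precisely what you are trying to prove about the null part. The paper's construction sidesteps all of this. It observes that on the union $B$ of non-null components, the locus $C(B)$ of singular points together with regular points with null tangent is discrete and clusters only on $\Gamma$, so $K=\Gamma\cup A\cup C(B)$ is compact; it then builds a function $v=\tilde h+C\chi\phi$ that is strongly plurisubharmonic near $K$ and strongly null plurisubharmonic along the rest of $V$ (there $T_zV\cap\Agot=\{0\}$, so the Levi form of the defining-function term $\phi$ is positive on $\Agot^*$), and extends it via Proposition \ref{lem:smoothing}(iv) to a global null plurisubharmonic barrier. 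This yields the separation directly, with no compactness or non-degeneracy issue to resolve. To make your approach work you would need an independent proof that some Poletsky null disc through a point of $V\setminus\Gamma$ can be chosen non-degenerate, which is not supplied by the material in the paper.
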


\begin{proof} 
According to Alexander \cite{Alexander1971}, the set 
$V=\widehat \Gamma \setminus \Gamma$ is a (possibly empty) 
closed bounded one-dimensional complex subvariety of $\c^n\setminus \Gamma$ with 
$\overline V\setminus V\subset \Gamma$. 
Let $V=\bigcup_j V_j$ be a decomposition of $V$ into irreducible components,
and let $A$ denote the union of all components $V_j$ which are null curves.
Then $A$ is a bounded complex null curve and $\overline A\setminus A\subset\Gamma$.
Clearly $A\subset\widehat \Gamma_\Ngot$ by the maximum principle
for subharmonic functions.

We claim that $\widehat \Gamma_\Ngot=\Gamma\cup A$; this will prove the theorem. 
To this end we will show that for any $p \in V\setminus A$ there exists a 
function $\phi\in \NPsh(\c^n)$ such that $\phi(p)>\max_\Gamma \phi$. 

Let $B$ denote the union of all irreducible component $V_i$ of $V$ which are not
contained in $A$  (i.e., which are not null curves). 
Then $B$ is a bounded $1$-dimensional complex subvariety 
of $\c^n\setminus \Gamma$ and $\overline B\setminus B\subset\Gamma$. 
Let $C(B)$ denote the union of the singular locus $B_\sing$ and the set of
points $z\in B_\reg$  such that the tangent line $T_z B\subset \Agot$ is a null line.
Then $C(B)$ is a closed discrete subset of $B$ which clusters only on $\Gamma$.
(To see that $C(B)$ cannot cluster at a singular point $z\in B_\sing$, choose 
a local irreducible component $B_j$ of $B$ at $z$ and parametrize $B_i$ 
locally near $z$ by a nonconstant holomorphic map $f:\d\to B_i$ with $f(0)=z$.
Clearly the set $\{\zeta\in \d:f'(\zeta)\in \Agot\}$ is either all of $\d$ or else
is discrete in $\d$; the first case is impossible by the definition of $B$.) Hence the set 
\begin{equation}\label{eq:K} 
	K=\Gamma\cup A\cup C(B)  \subset \widehat \Gamma
\end{equation} 
is compact. Fix a point $p\in B\setminus A$; then either $p\notin K$ or $p$ is an isolated
point of $K$. Choose a pair of bounded open sets $U_0, U_1\Subset  \c^n$ such that
\[
		\overline U_0\cap \overline U_1= \emptyset, \quad
 		K\setminus \{p\} \subset U_0,\quad p\in U_1.
\]
Pick a smooth function $h\colon \c^n\to [0,1]$ such that  $h=0$ on $U_0$ 
(in particular, $h=0$ on $\Gamma$) and $h=1$ on $U_1$. 
Choose $\epsilon>0$ small enough such that the function 
$\tilde h(z) = h(z)+\epsilon |z-p|^2$
($z\in\c^n$) satisfies $\max_\Gamma  \tilde h<1$. Note that $\tilde h(p)=1$
and $\tilde h$ is strongly plurisubharmonic on $U_0\cup U_1$ 
(since $h$ is locally constant there and $z\mapsto |z-p|^2$ is strongly plurisubharmonic
on $\c^n$). Pick a smooth function $\chi\colon \c^n\to [0,1]$ such that $\chi=0$ 
on a small open neighborhood of the set $K$ (\ref{eq:K}) 
and $\chi>0$ on $\c^n\setminus (U_0\cup U_1)$. 
Since $V$ is a closed complex curve in $\c^n\setminus \Gamma$, there exists 
a plurisubharmonic function $\phi\ge 0$ on an open neighborhood
of $V$ in $\c^n$ that vanishes quadratically on $V$ and satisfies
\begin{equation}\label{eq:phi}
	\Lcal_\phi(z;\theta)>0\ \  
	\text{for\ all}\ z\in V_\reg\ \  \text{and}\ \theta\in  T_z\c^n\setminus T_z V.
\end{equation}
(Such $\phi$ can be chosen of the form $\phi=\sum_k |f_k|^2$ where 
$\{f_k\}$ is a collection of holomorphic defining functions for $V$ 
on a Stein neighborhood of $V$ in $\c^n$.)
We claim that for $C>0$ chosen big enough the function
\[
	v = \tilde h + C\chi \phi
\] 
is null plurisubharmonic on an open neighborhood of 
$\Gamma \cup V=\widehat \Gamma$ in $\c^n$. (Even though 
$\phi$ is only defined near $V$, the multiplier $\chi$ vanishes
near $\Gamma$ and hence we extend the product $\chi \phi$ as zero 
on a neighborhood of $\Gamma$.)
We have $v=\tilde h$ on a neighborhood $D \Subset U_0\cup U_1$ of $K$, 
so $v$ is strongly plurisubharmonic there. Suppose now that $z\in V\setminus D$. 
Then $z$ is a regular point of $V$ and the tangent line $T_z V$ is not a null line,
so $T_z V \cap \Agot=\{0\}$. 
It follows from (\ref{eq:phi}) that $\Lcal_\phi(z;\theta)>0$ for every $\theta\in \Agot^*$.
Since the set $V\setminus D$ is compact, we can ensure by choosing $C>0$
big enough that $\Lcal_v(z;\theta)>0$ for every $z\in V\setminus D$ and
$\theta\in \Agot^*$, which proves the claim. Observe also that $v(p)=\tilde h(p)=1$
and $\max_\Gamma v=\max_\Gamma \tilde h <1$.

Since $\widehat \Gamma$ is polynomially convex,
Proposition \ref{lem:smoothing}-(iv) furnishes a  null plurisubharmonic
function  $\phi \in \NPsh(\c^n)$ which agrees with $v$ near
$\widehat \Gamma$. Then $1=\phi(p)>\max_\Gamma \phi$ and hence
$p\notin\widehat\Gamma_\Ngot$. This completes the proof.
\end{proof}

In the course of proof of Theorem \ref{th:NHcurves} we have actually shown
the following result.

\begin{lemma}\label{lem:extend}
Let $V$ be a smooth locally closed complex curve in $\c^n$ $(n\ge 3)$ whose tangent 
line $T_z V$ is not a null line for any $z\in V$. Then for every  $h\in \Cscr^2(V)$
there exists an open neighborhood $\Omega\subset\c^n$ of $V$ and 
a strongly null plurisubharmonic function $v\in \NPsh(\Omega)$  such that
$v|_V=h$.
\end{lemma}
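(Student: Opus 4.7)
The plan is to isolate and formalize the construction already used inside the proof of Theorem \ref{th:NHcurves}. Since $V$ is a smooth locally closed complex submanifold of $\c^n$, I may pass to an open subset of $\c^n$ in which $V$ is closed, and hence assume that $V$ is a closed smooth complex submanifold of an open set on which a Stein tubular neighborhood $U$ of $V$ exists together with a smooth retraction $\pi\colon U\to V$. Setting $\tilde h:=h\circ \pi$ yields a $\Cscr^2$ extension of $h$ to $U$ with $\tilde h|_V=h$.

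Next I would build a nonnegative plurisubharmonic function $\phi\in\Psh(U)$ vanishing to second order on $V$ and whose Levi form is strictly positive in all complex directions transverse to $V$, exactly as in the proof of Theorem \ref{th:NHcurves}. By Cartan's Theorem A on the Stein manifold $U$, the ideal sheaf of $V$ is generated by global holomorphic sections; pick $\{f_k\}\subset\Oscr(U)$ whose common zero set is $V$ and whose differentials $df_k(z)$ span the conormal space at every $z\in V$. After a suitable rescaling and local finiteness, $\phi:=\sum_k |f_k|^2$ is a well-defined nonnegative plurisubharmonic function on $U$ satisfying $\phi=0$ and $d\phi=0$ on $V$ and
\[
\Lcal_\phi(z;\theta)=\sum_k |df_k(z)\cdot\theta|^2,\qquad z\in V,\ \theta\in T_z\c^n,
\]
which is strictly positive if and only if $\theta\notin T_zV$.

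The hypothesis that $T_zV\subset\c^n$ is not a null line means precisely that the one-dimensional complex subspace $T_zV$ is not contained in the quadric $\Agot$, which forces $T_zV\cap\Agot=\{0\}$ and hence $\Agot^*\cap T_zV=\emptyset$ for every $z\in V$. Consequently $\Lcal_\phi(z;\theta)>0$ for every $z\in V$ and every null vector $\theta\in\Agot^*$.

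Finally, put $v:=\tilde h + C\phi$. Then $v|_V=h$ because $\phi$ vanishes on $V$. Using $\phi(z)=0$ and $d\phi(z)=0$ on $V$, the Levi form at $z\in V$ simplifies to
\[
\Lcal_v(z;\theta)=\Lcal_{\tilde h}(z;\theta)+C(z)\Lcal_\phi(z;\theta).
\]
On any relatively compact piece of $V$, a sufficiently large positive constant $C$ makes the right-hand side strictly positive for all unit $\theta\in\Agot$, and by continuity $\Lcal_v(\,\cdot\,;\theta)>0$ then holds on a neighborhood of that piece. For non-compact $V$ I would replace the constant by a smooth positive function $C\in\Cscr^\infty(U)$ that grows sufficiently fast along an exhaustion of $V$; the extra terms produced by derivatives of $C$ contain $\phi$ or $d\phi$ as factors, so they vanish on $V$ and remain negligible on a sufficiently thin neighborhood $\Omega$ of $V$, preserving positivity of $\Lcal_v$. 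The main technical point is precisely this patching argument for non-compact $V$, but since $\Omega$ may be taken arbitrarily thin transversally to $V$, it poses no essential obstacle.
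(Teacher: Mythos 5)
Your proposal is correct and takes essentially the same route as the paper: the paper's own justification of this lemma is that it is contained in the proof of Theorem~\ref{th:NHcurves}, where the extension $v = \tilde h + C\chi\phi$ with $\phi = \sum_k |f_k|^2$ vanishing to second order along the curve is exactly the construction you reproduce. You correctly isolate the key geometric observation — that a one-dimensional complex subspace not lying in the cone $\Agot$ must meet $\Agot$ only in $0$, so $\Lcal_\phi(z;\cdot)$ is strictly positive on $\Agot^*$ along $V$ — and you correctly note that the cross terms $\partial C \wedge \bar\partial\phi + \partial\phi\wedge\bar\partial C$ and the term $\phi\cdot\Lcal_C$ vanish on $V$ because $\phi$ and $d\phi$ do, so a variable multiplier $C$ may be used without destroying positivity on a sufficiently thin tube around $V$. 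The one place where you go beyond the paper is the treatment of noncompact $V$ via a growing $C$ along an exhaustion; the paper's embedded argument only handles the relevant compact piece of the hull, so your patching step is a necessary and standard supplement to make the lemma as stated literally correct for an arbitrary locally closed smooth curve.
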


Theorem \ref{th:NHcurves} also implies the following corollary; clearly one can formulate
more general statements in this direction.

\begin{corollary}\label{cor:NHcurves}
Assume that $\Gamma$ is a rectifiable connected Jordan curve in $\c^n$ $(n\ge 3)$ which contains
an embedded arc $I\subset \c^n$ of class $\Cscr^r$ for some $r>1$.
If there exists a point $p\in I$ such that the tangent line $T_p I$ does not belong to 
the null quadric $\Agot$, then $\widehat \Gamma_\Ngot=\Gamma$.
\end{corollary}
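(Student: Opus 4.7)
The plan is to reduce, via Theorem \ref{th:NHcurves}, to proving the nonexistence of a bounded complex null curve $A\subset\c^n\setminus\Gamma$ with $\overline A\setminus A\subset\Gamma$, and then derive a contradiction from the tangent hypothesis at $p$. Theorem \ref{th:NHcurves} gives $\widehat\Gamma_\Ngot=\Gamma\cup A$, where $A$ is the union of the null-curve components of $\widehat\Gamma\setminus\Gamma$, so the task reduces to showing $A=\emptyset$.

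Suppose for contradiction that $A\ne\emptyset$. Since $\Gamma$ has finite linear measure, the theory of analytic sets with rectifiable boundary (Alexander) shows that $A$ has finite area and that the current of integration $[A]$ is integer rectifiable with boundary $\partial[A]$ an integer rectifiable $1$-cycle supported on $\Gamma$. A nonempty bounded complex $1$-variety in $\c^n$ cannot be closed as a current (apply Stokes against the strongly plurisubharmonic function $|z|^2$ whose $dd^c$ is a positive multiple of the Euclidean K\"ahler form, giving positive area against a supposedly closed current), so $\partial[A]\ne 0$. Since $\Gamma$ is topologically a circle, every closed integer $1$-current on $\Gamma$ has the form $c[\Gamma]$ with $c\in\z$; hence $\partial[A]=c[\Gamma]$ with $c\ne 0$, so the support of $\partial[A]$ is all of $\Gamma$, and in particular $p\in\supp(\partial[A])$.

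The next step is a boundary regularity statement: at the smooth point $p$ of the $\Cscr^r$ arc $I\subset\Gamma$ ($r>1$) where the boundary of $A$ has positive density, Alexander--Chirka type boundary regularity for analytic sets with smooth rectifiable boundary produces a local $\Cscr^1$ parametrization of a branch of $A$ terminating at $p$: there is a holomorphic map $f\colon\d\cap\h\to A$, continuous and $\Cscr^1$ up to the diameter $\d\cap\r$, with $f(0)=p$ and $f(\d\cap\r)$ an open subarc of $I$ through $p$. By continuity, the nullity condition $f'(\zeta)\in\Agot$ persists up to the boundary. If $\gamma$ is the arclength parametrization of $I$ near $p$, then for $t\in\d\cap\r$ one can write $f(t)=\gamma(s(t))$ with $s$ a real $\Cscr^1$ function, so $f'(t)=s'(t)\gamma'(s(t))$ is a real scalar multiple of $\gamma'(s(t))$. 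Since $\gamma'(0)\notin\Agot$ and $\Agot$ is a cone, $f'(0)\notin\Agot$ whenever $s'(0)\ne 0$, which contradicts $f'(0)\in\Agot$. If instead $s'(0)=0$, then Schwarz reflection across $\d\cap\r$ extends $f$ holomorphically to all of $\d$ and forces $f$ to be constant along a real diameter, hence constant on $\d$, contradicting that $f$ parametrizes a genuine curve branch of $A$ through $p$.

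The main obstacle is the boundary regularity invoked above: extracting a $\Cscr^1$ half-disc parametrization of $A$ at a smooth boundary point on $I$ requires a careful application of Alexander--Chirka type boundary regularity for one-dimensional analytic sets with $\Cscr^r$ rectifiable boundary, and it is precisely here that the hypothesis $r>1$ is essential, since it guarantees enough smoothness of $I$ to have a well-defined tangent line and to extend the null curve in a $\Cscr^1$ fashion up to $I$. A secondary technical issue is the cycle-theoretic argument of the second paragraph, ensuring that the boundary current of $A$ has full support on the Jordan curve $\Gamma$; this uses the topology of $\Gamma$ as a circle together with Stokes' theorem to rule out $A$ being a closed current.
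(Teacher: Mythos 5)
Your proposal is correct in outline but takes a genuinely different and more roundabout route than the paper's own proof. The paper's argument is short: Alexander's theorem for a rectifiable Jordan curve $\Gamma$ gives, whenever $\widehat\Gamma\ne\Gamma$, that $V=\widehat\Gamma\setminus\Gamma$ is a \emph{connected} complex curve with $bV=\Gamma$; since $V$ is a single connected variety reaching the entire curve $\Gamma$, it in particular attaches to the smooth arc $I$ near $p$. Chirka's boundary regularity then makes $V\cup I$ a $\Cscr^1$ manifold with boundary near $p$, and the limit of the complex tangent lines $T_zV\subset\Agot$ as $z\to p$ would be a complex line in $\Agot$ containing $T_pI$, forcing $T_pI\in\Agot$; hence $V$ is not a null curve and Theorem \ref{th:NHcurves} gives $\widehat\Gamma_\Ngot=\Gamma$. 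You instead work directly with the null part $A\subset V$, and to show $A$ reaches the point $p$ you use a current-theoretic argument (Stokes against $dd^c|z|^2$ forces $\partial[A]\ne 0$, then a constancy theorem for integer $1$-cycles on the topological circle $\Gamma$ forces $\partial[A]=c[\Gamma]$ with $c\ne 0$, hence $p\in\supp\partial[A]$). This replaces the connectedness of $V$ by a homological argument; it is a legitimate alternative and does work, but it is noticeably heavier than what the paper uses, and it requires importing nontrivial facts about the boundary current of $[A]$ being integer rectifiable with finite mass and about the constancy theorem on a rectifiable Jordan curve, all of which the paper sidesteps by simply citing connectedness of $V$ and $bV=\Gamma$.

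One step in your write-up is not correct as stated: the fallback when $s'(0)=0$, where you invoke Schwarz reflection to extend $f$ holomorphically across $\d\cap\r$. Schwarz reflection requires the boundary arc $I$ to be real analytic, and here $I$ is only $\Cscr^r$ for some $r>1$, so the reflection argument does not apply. Fortunately the case does not arise: what Chirka's theorem (or the Kellogg--Warschawski type boundary regularity for the conformal half-disc uniformization) actually gives is a $\Cscr^1$ diffeomorphic parametrization of $\overline V$ near $p$ as a manifold with boundary, so $f'(\zeta)\ne 0$ up to and including the boundary, i.e.\ $s'(0)\ne 0$ automatically. Even cleaner is to drop the parametrization altogether, as the paper does, and argue directly with the limit of tangent planes: $T_p(\overline V)=\lim_{V\ni z\to p}T_zV$ is a complex line (limit of complex lines), it lies in the closed cone $\Agot$ if $V$ is a null curve, and it contains $T_pI$; that yields the contradiction immediately. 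You should therefore delete the Schwarz reflection case and replace it by the observation that the $\Cscr^1$ boundary parametrization is a diffeomorphism, or simply pass to the tangent-plane limit as above.
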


\begin{proof}
If $\widehat \Gamma\ne \Gamma$ then $V=\widehat \Gamma\setminus \Gamma$
is a connected complex curve with boundary $bV=\Gamma$ by Alexander's theorem \cite{Alexander1971}. 
Since the arc $I\subset bV$ is of class $\Cscr^r$ with $r>1$, the union
$V\cup I$ is a $\Cscr^1$ variety with boundary  along  $I$ 
(see Chirka \cite{Chirka1989}). If $T_p I$ does not belong to $\Agot$ 
for some $p\in I$ then $V$ is not a null curve; hence the conclusion 
follows from Theorem \ref{th:NHcurves}.
\end{proof}

%
%
%
%
%
\section{Minimal plurisubharmonic functions and minimal hulls}
\label{sec:minimal}
In this section we study {\em minimal plurisubharmonic functions} on domains 
in $\r^n$ and {\em minimal hulls} of compact subsets of $\r^n$.
The central results, Theorem  \ref{th:minpshminorant}
and Corollary \ref{cor:minullhull}, are only proved for $n=3$
since they rely on the disc formula in Theorem \ref{th:nullpshminorant}.

%
%
\begin{definition}\label{def:minimalpsh}
An upper semicontinuous function $u$ on a domain $\omega\subset\r^n$ $(n\ge 3)$ is
{\em minimal plurisubharmonic} if the restriction of $u$ to any affine $2$-dimensional 
plane $L\subset \r^n$ is subharmonic on $L\cap \omega$ (in any isothermal
linear coordinates on $L$). The set of all such functions will be denoted by $\MPsh(\omega)$.
A function $u\in \Cscr^2(\omega)$ is {\em minimal strongly plurisubharmonic} 
if the restriction of $u$ to any affine $2$-dimensional plane $L\subset \r^n$ 
is strongly subharmonic on $L\cap \omega$.
\end{definition}

Linear coordinates $(y_1,y_2)$ on a 2-plane $L\subset \r^n$ are isothermal if $L$ is parametrized by
$x=a+y_1 v_1 + y_2 v_2$ where $a\in L$ and $v_1,v_2\in\rø^3 $ is a pair of orthogonal vectors of equal length.
Note that if $u\in\Cscr^2(\omega)$ is minimal plurisubharmonic then $u(x)+\epsilon |x|^2$
is minimal strongly plurisubharmonic on $\omega$ for every $\epsilon>0$.
(Here $|x|^2=\sum_{i=1}^n x_i^2$.) 

%
%
%
%
\begin{remark} \label{rem:2psh}
Smooth minimal plurisubharmonic functions are exactly {\em $2$-plurisubharmonic functions}
studied by Harvey and Lawson in \cite{Harvey-Lawson2013} (see in particular
Definition 2.2,  Proposition 2.3 and Definition 4.1 in \cite[p.\ 153]{Harvey-Lawson2013}). 
They are characterized by the property that the sum of the two smallest eigenvalues of the
Hessian is nonnegative at each point.  Here we adopt a more suggestive terminology  to 
emphasize their relationship with minimal surfaces. 
\end{remark}

There is a close connection between minimal plurisubharmonic functions on $\r^n$ 
and null plurisubharmonic functions on $\c^n$.

%
%
%
%
\begin{lemma} \label{lem:minimal}
Let $\omega\subset\r^n$ and $\Omega=\Tcal_\omega =\omega\times \imath \r^n\subset\c^n$.
\begin{itemize}
\item If $u\in\MPsh(\omega)$ then the function $U(x+\imath y)=u(x)$ $(x+\imath y\in \Omega)$ 
is null plurisubharmonic on the tube $\Omega=\Tcal_\omega$.
\item Conversely, if $U\in \NPsh(\Omega)$ is independent of
the variable $y=\Im z$ then the function $u(x)=U(x+\imath 0)$ $(x\in \omega)$
is minimal plurisubharmonic on $\omega$.
\end{itemize}
\end{lemma}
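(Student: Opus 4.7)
The plan is to unpack both sides of the statement via the natural dictionary between null vectors in $\c^n$ and pairs of orthogonal equal-length real vectors in $\r^n$, and then observe that a null complex line in the tube $\Tcal_\omega$ projects, on the real side, to an isothermally parametrized affine $2$-plane in $\omega$.

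First I would record the elementary fact: for $\theta = a + \imath b \in \c^n$ with $a,b\in\r^n$, the equation $\sum_{j=1}^n \theta_j^2 = 0$ is equivalent to the two real conditions $|a|^2 = |b|^2$ and $a\cdot b = 0$; moreover $\theta\in\Agot^*$ iff additionally $a\ne 0$ (equivalently $b\ne 0$). Thus every nonzero null vector corresponds to an ordered pair of nonzero orthogonal real vectors of equal length, and conversely.

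Next, the core computation. Fix $z_0 = x_0 + \imath y_0\in \Omega$ and $\theta = a + \imath b\in\Agot^*$, and write $\zeta = s + \imath t$. A direct expansion gives
\[
   z_0 + \zeta\theta \;=\; \bigl(x_0 + s\, a - t\, b\bigr) \,+\, \imath\bigl(y_0 + s\, b + t\, a\bigr).
\]
Since $U(x+\imath y) = u(x)$ depends only on the real part,
\[
   U(z_0 + \zeta \theta) \;=\; u\bigl(x_0 + s\, a - t\, b\bigr).
\]
The map $(s,t)\mapsto x_0 + s\, a - t\, b$ parametrizes the affine $2$-plane $L = x_0 + \span_\r\{a,b\}\subset\r^n$, and because $a\perp(-b)$ with $|a| = |{-b}|$, these are isothermal coordinates on $L$ in the sense of Definition \ref{def:minimalpsh}. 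Under the usual identification $\zeta = s+\imath t$, subharmonicity in $\zeta$ on $\{\zeta : z_0 + \zeta\theta \in \Omega\}$ is literally the same condition as subharmonicity in the isothermal coordinates $(s,t)$ on $L\cap \omega$ (noting that $z_0 + \zeta\theta\in\Omega = \omega\times\imath\r^n$ iff $x_0 + sa - tb \in \omega$).

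For the forward implication, suppose $u\in \MPsh(\omega)$. Upper semicontinuity of $U$ is immediate from that of $u$. For any $z_0\in\Omega$ and $\theta\in\Agot^*$, the restriction $\zeta\mapsto U(z_0+\zeta\theta) = u(x_0+sa-tb)$ is subharmonic by the minimal plurisubharmonicity of $u$ applied to $L$, so $U\in\NPsh(\Omega)$. For the converse, assume $U\in\NPsh(\Omega)$ depends only on $x=\Re z$, and let $L = x_0 + \span_\r\{v_1,v_2\}\subset\omega$ be any affine $2$-plane equipped with isothermal coordinates, so $v_1\perp v_2$ and $|v_1| = |v_2|$. Set $\theta := v_1 - \imath v_2$; by the first step $\theta\in\Agot^*$. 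The identity above, with $a=v_1$ and $-b=v_2$, shows that $u$ restricted to $L$ in these isothermal coordinates coincides with $\zeta\mapsto U(z_0+\zeta\theta)$, which is subharmonic by hypothesis. Upper semicontinuity of $u$ follows from that of $U$ on the slice $y=0$. Hence $u\in\MPsh(\omega)$.

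There is no real obstacle here: everything reduces to the algebraic identification of $\Agot^*$ with pairs of orthogonal equal-length vectors, plus the bookkeeping that this identification turns complex-line restrictions in the tube into isothermally parametrized $2$-plane restrictions in $\omega$. The only point that deserves a line of care is verifying that "isothermal" in Definition \ref{def:minimalpsh} (orthogonal vectors of equal length) matches exactly the constraint imposed by $\theta\in\Agot$, which is what makes the correspondence an equivalence rather than merely an implication.
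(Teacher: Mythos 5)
Your proof is correct and takes essentially the same route as the paper's, which reduces the statement to the identity $U\circ f = u\circ\Re f$ for null holomorphic maps $f$ into the tube; you simply work this out concretely for the linear null discs $f(\zeta)=z_0+\zeta\theta$, making explicit the dictionary $\theta=a+\imath b\leftrightarrow(a,-b)$ between $\Agot^*$ and ordered pairs of orthogonal equal-length vectors. The paper states the conclusion in one line, while your write-up fills in exactly the computation it leaves to the reader.
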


\begin{proof}
Recall that the real and the imaginary part of a holomorphic null disc $f\in \Ngot(\d,\c^n)$ 
are conformal minimal discs in $\r^n$; conversely, every conformal minimal disc in 
$\r^n$ is the real part of a holomorphic null disc in $\c^n$. Since 
$U\circ f=u\circ \Re f$ for all $f\in \Ngot(\d,\Omega)$,  the lemma follows.
\end{proof}

Lemma \ref{lem:minimal}  shows that properties (i), (iii)-(v) in 
Lemma \ref{lemma-nullpsh1}, and (i)-(iii) in Proposition \ref{komp_psh_disc}, 
of null plurisubharmonic functions  descend to the corresponding properties of 
minimal plurisubharmonic functions.  In particular, we have the following result.

%
%
%
%
\begin{lemma}\label{lem:characterizationMS} 
An upper semicontinuous function $u$ on a domain $\omega\subset\r^n$ is minimal plurisubharmonic 
if and only if for every conformal minimal immersion $f:\d\to \omega$ the composition
$u\circ f$ is subharmonic on $\d$. More generally, the restriction $u|_M$ of a minimal plurisubharmonic function 
to any minimal  $2$-dimensional submanifold $M$ is subharmonic in any isothermal 
coordinates on $M$. 
\end{lemma}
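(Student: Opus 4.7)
My proof will exploit the bridge between minimal and null plurisubharmonic functions already established in Lemma~\ref{lem:minimal}, and the standard fact that the parametrization of a $2$-plane by an orthonormal frame is a conformal minimal immersion.

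For the forward direction, assume $u \in \MPsh(\omega)$ and let $f\colon \d \to \omega$ be a conformal minimal immersion. Since $\d$ is simply connected, $f$ admits a harmonic conjugate, so there is a holomorphic null immersion $F\colon \d \to \c^n$ with $\Re F = f$ (this is exactly the correspondence recalled in the introduction via the identity $\sum_j F_j'(\zeta)^2=0$). Clearly $F$ maps $\d$ into the tube $\Tcal_\omega = \omega + \imath \r^n$. The function $U(x+\imath y) := u(x)$ on $\Tcal_\omega$ is null plurisubharmonic by Lemma~\ref{lem:minimal}, so Proposition~\ref{lem:smoothing}(iii) gives that $U \circ F$ is subharmonic on $\d$. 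But $U \circ F(\zeta) = u(f(\zeta))$, so $u\circ f$ is subharmonic as required.

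For the converse, suppose $u\circ f$ is subharmonic for every conformal minimal immersion $f\colon \d\to\omega$. Fix an affine $2$-plane $L\subset\r^n$ parametrized by $x = a + y_1 v_1 + y_2 v_2$ with $v_1,v_2$ an orthogonal pair of equal length. For each point $a\in L\cap\omega$ and each $r>0$ small enough so that the disc of radius $r$ in $L$ centered at $a$ lies in $\omega$, define
\[
   f_{a,r}(\zeta) = a + r \Re(\zeta)\, \hat v_1 + r \Im(\zeta)\, \hat v_2, \qquad \zeta\in\d,
\]
where $\hat v_i = v_i/|v_i|$. This map is affine (hence harmonic) and conformal (since $\hat v_1,\hat v_2$ are orthonormal), so it is a conformal minimal immersion into $\omega$. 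By hypothesis $u\circ f_{a,r}$ is subharmonic, so in particular
\[
   u(a) = u(f_{a,r}(0)) \le \int_0^{2\pi} u(f_{a,r}(\E^{\imath t}))\,\frac{dt}{2\pi},
\]
which is precisely the sub-mean-value inequality for $u|_{L\cap\omega}$ on the disc of radius $r$ around $a$ in the isothermal coordinates $(y_1,y_2)$. Since this holds for all sufficiently small $r$, the upper semicontinuous function $u|_{L\cap\omega}$ is subharmonic in these coordinates, so $u\in\MPsh(\omega)$.

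For the final assertion about a minimal $2$-dimensional submanifold $M\subset\omega$: by the classical characterization of minimal surfaces, any isothermal local parametrization $\psi\colon D\to M$ of an open piece of $M$ (with $D\subset\c$ a disc) is a conformal harmonic, hence conformal minimal, immersion into $\omega$. The already proved first part gives that $u\circ\psi$ is subharmonic on $D$, which is the statement that $u|_M$ is subharmonic in isothermal coordinates. No step here presents a serious obstacle; the only subtlety is making sure, in the backward direction, that affine orthonormal parametrizations really furnish enough conformal minimal discs to test subharmonicity on each $2$-plane, which is handled by the scaling $r\downarrow 0$ above.
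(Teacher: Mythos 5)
Your proof is correct and follows the route the paper itself indicates (the paper states Lemma~\ref{lem:characterizationMS} as an immediate consequence of the correspondence in Lemma~\ref{lem:minimal} together with Proposition~\ref{lem:smoothing}(iii), without writing out the details). You supply exactly the two missing pieces — lifting a conformal minimal disc to a null holomorphic disc in the tube for the forward direction and the ``more generally'' clause, and testing against affine conformal parametrizations of $2$-planes for the converse — so this matches the intended argument.
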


A precise expression for the Laplacian of $u\circ f$, where $f\colon\d \to\omega$ is a 
conformal minimal immersion, is given by (\ref{eq:ddcuf}) below.

For any open set $\omega\subset \r^n$ $(n\ge 3)$ we denote by $\Mgot(\d,\omega)$
the set of all conformal minimal immersions $f\colon \overline \d\to \omega$.
Given a point $x\in \omega$  we set 
\[
	\Mgot(\d,\omega,x)= \{f\in \Mgot(\d,\omega): f(0)=x\}.
\] 

The following result gives an effective way of constructing minimal plurisubharmonic functions on domains in $\r^3$. 

%
%
%
%
\begin{theorem} 
\label{th:minpshminorant} 
Let $\omega$ be a domain in $\r^3$
and let $\phi\colon \omega\to \r\cup\{-\infty\}$ be an upper semicontinuous 
function on $\omega$. Then the function 
\begin{equation}
\label{eq:minPoisson-funct}
   u(x) = \inf \Big\{\int^{2\pi}_0 \phi(f(\E^{\imath t}))\, 
   \frac{dt}{2\pi} \colon \ f\in \Mgot(\d,\omega,x) \Big\}, \quad x\in \omega,
\end{equation}
is minimal plurisubharmonic on $\omega$ or identically $-\infty$; 
moreover, $u$ is the supremum of the minimal plurisubharmonic 
functions on $\omega$ which are not greater than $\phi$. 
\end{theorem}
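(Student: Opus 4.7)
The plan is to deduce Theorem \ref{th:minpshminorant} from its null plurisubharmonic counterpart, Theorem \ref{th:nullpshminorant}, via the tube construction of Lemma \ref{lem:minimal}. Set $\Tcal_\omega := \omega \times \imath \r^3 \subset \c^3$ and lift $\phi$ to $\Phi \colon \Tcal_\omega \to \r \cup \{-\infty\}$ by $\Phi(z) := \phi(\Re z)$; since $\phi$ is upper semicontinuous and $\Re$ is continuous, so is $\Phi$. Applying Theorem \ref{th:nullpshminorant} to $\Phi$ on the (unbounded but admissible) domain $\Tcal_\omega$ produces the null plurisubharmonic envelope
\[
   U(z) = \inf \Big\{\int_0^{2\pi} \Phi(F(\E^{\imath t}))\, \frac{dt}{2\pi} \colon F \in \Ngot(\d,\Tcal_\omega,z)\Big\},
\]
which is the largest element of $\NPsh(\Tcal_\omega)$ bounded above by $\Phi$ (unless it is identically $-\infty$).

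Next I would establish that $U(z+\imath c) = U(z)$ for every $c\in\r^3$. Given any null disc $F\in\Ngot(\d,\Tcal_\omega,z)$, the translate $F+\imath c$ is again a null holomorphic immersion (adding a constant vector preserves nullity) whose image lies in $\Tcal_\omega$ (the tube is $\imath\r^3$-invariant) and which is centered at $z+\imath c$; moreover $\Phi(F+\imath c) = \Phi(F)$. Thus $F\mapsto F+\imath c$ is a bijection $\Ngot(\d,\Tcal_\omega,z)\to\Ngot(\d,\Tcal_\omega,z+\imath c)$ preserving the Poisson integral, so the two infima agree. Consequently $U(x+\imath y) = u_0(x)$ is independent of $y$, and the second item of Lemma \ref{lem:minimal} yields $u_0\in\MPsh(\omega)$.

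To identify $u_0$ with the right-hand side of \eqref{eq:minPoisson-funct}, invoke the Osserman correspondence on the simply connected disc: every $f\in\Mgot(\d,\omega,x)$ admits a harmonic conjugate $g\colon\d\to\r^3$, normalized by $g(0)=0$, so that $F := f+\imath g$ is a null holomorphic immersion in $\Ngot(\d,\Tcal_\omega,x)$; conversely, $\Re F\in\Mgot(\d,\omega,x)$ for every $F\in\Ngot(\d,\Tcal_\omega,x)$. Since $\Phi(F(\E^{\imath t})) = \phi(\Re F(\E^{\imath t})) = \phi(f(\E^{\imath t}))$, the two infima coincide and $u_0(x) = u(x)$. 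Finally, if $v\in\MPsh(\omega)$ satisfies $v\le\phi$, the first item of Lemma \ref{lem:minimal} lifts $v$ to $V(x+\imath y):=v(x)\in\NPsh(\Tcal_\omega)$ with $V\le\Phi$; by the maximality clause of Theorem \ref{th:nullpshminorant} we get $V\le U$, whence $v\le u$ on $\omega$. The only nontrivial obstacle is the translation-invariance step; once that is in hand the argument is a transparent dictionary between conformal minimal discs in $\omega$ and null holomorphic discs in the tube $\Tcal_\omega$.
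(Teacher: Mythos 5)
Your proposal is correct and follows essentially the same route as the paper: lift $\phi$ to $\Phi$ on the tube $\Tcal_\omega$, apply Theorem \ref{th:nullpshminorant} to get $U\in\NPsh(\Tcal_\omega)$, observe that $U$ is independent of $\Im z$ and descends via Lemma \ref{lem:minimal} to the formula \eqref{eq:minPoisson-funct}. The paper packages the translation-invariance and the identification $u_0=u$ into a single equality of the two Poisson infima via the Osserman correspondence (which makes your separate translation-invariance step redundant), and it proves the maximality clause by the direct sub-mean-value argument on conformal minimal discs rather than by lifting $v$ to the tube, but these are organizational differences rather than a different proof.
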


\begin{proof}
Given a minimal plurisubharmonic function $v\in\MPsh(\omega)$ such
that $v\le \phi$ and a point $x\in \omega$,  the maximum principle for subharmonic functions shows that
\[
	v(x)\le \int^{2\pi}_0 v(f(\E^{\imath t}))\,  \frac{dt}{2\pi} 
	\le \int^{2\pi}_0 \phi(f(\E^{\imath t}))\, \frac{dt}{2\pi}, 
	\quad  \forall f\in\Mgot(\d,\omega,x).
\]
By taking the infimum over all $f$  we obtain $v\le u$ on $\omega$,
where $u$ is defined by (\ref{eq:minPoisson-funct}). To complete the proof we show 
that $u$ is minimal plurisubharmonic.
Let $\Phi$ be the upper semicontinuous function on $\Omega=\omega\times \imath \r^3$
defined by $\Phi(x+\imath y)=\phi(x)$ for $x\in\omega$ and $y\in\r^3$. 
Fix $z=x+\imath y\in\Omega$. Since $\Phi\circ g=\phi\circ \Re g$ for all 
$g\in \Ngot(\d,\Omega,z)$ and any $f\in \Mgot(\d,\omega,x)$ 
is the real part of a null disc $g\in \Ngot(\d,\Omega,z)$, we have
\[ 
	\inf \Big\{\int^{2\pi}_0 \Phi(g(\E^{\imath t}))\, 
   \frac{dt}{2\pi} \colon \ g\in \Ngot(\d,\Omega,z) \Big\} =   
   \inf \Big\{\int^{2\pi}_0 \phi(f(\E^{\imath t}))\, \frac{dt}{2\pi} \colon \ f\in \Mgot(\d,\omega,x) \Big\}.
\]
The left hand side defines a function $U(z)$  which is 
null plurisubharmonic on $\Omega$ by Theorem \ref{th:nullpshminorant}. 
Comparing the two sides we see that $U$ is independent of the imaginary component $y=\Im z$.
Hence the function on the right hand side, which equals $u$ (\ref{eq:minPoisson-funct}), 
is minimal subharmonic on $\omega$ by Lemma \ref{lem:minimal}.
\end{proof}

%
%
%
\begin{definition}\label{def:minimalhull}
The {\em minimal hull}  of  a compact set $K \subset \r^n$ $(n\ge 3)$ is the set 
\begin{equation}\label{eq:minimalhull}
	\wh K_\Mgot=\{x\in \r^n\colon u(x)\le \sup_K u \quad \forall u\in \MPsh(\r^n)\}.
\end{equation}
\end{definition}

We clearly have 
\begin{equation}\label{eq:comhulls}
	K\subset   \wh K_{\Mgot}\subset   \Co(K),
\end{equation}
where $\Co(K)$ denotes the convex hull of $K$.

The maximum principle for subharmonic functions implies that for any bounded minimal surface 
$M\subset \r^n$ with boundary $bM\subset K$ we have $M\subset \wh K_{\Mgot}$.

%
%
%
%
\begin{remark}\label{rem:Mhull}
Since minimal plurisubharmonic functions can be approximated  by
smooth minimal plurisubharmonic functions, we see that our definition 
of the minimal hull coincides with the {\em $2$-convex hull} 
of Harvey and Lawson \cite[Definition 3.1, p.\ 157]{Harvey-Lawson2013}. 
In \cite[Sec.\ 4]{Harvey-Lawson2013} they introduced {\em minimal current hull}
of dimension $p$ of a compact set in any Riemannian manifold $(X^n,g)$ for $2\le p<n$
and showed that it is contained in the $p$-plurisubharmonic hull  of $K$
\cite[Theorem 4.11]{Harvey-Lawson2013}. 
\qed\end{remark}

We have the following analogue of Proposition \ref{prop:exhaustion} whose
proof we leave to the reader; it closely follows the standard plurisubharmonic case
(cf.\ \cite[Theorem 1.3.8, p.\ 25]{Stout-convexity}).

%
%
%
%
\begin{proposition}\label{prop:Mexhaustion}
Given a compact minimally convex set $K=\wh K_\Mgot$ in $\r^n$ and an open set $U\subset \r^n$
containing $K$, there exists a smooth minimal plurisubharmonic exhaustion function 
$\rho\colon\r^n\to\r_+$ such that $\rho=0$ on a neighborhood of $K$ and 
$\rho$ is positive minimal strongly plurisubharmonic on $\r^n\setminus U$.
\end{proposition}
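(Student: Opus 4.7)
The plan is to adapt the classical construction of a plurisubharmonic exhaustion for a polynomially convex compact set (cf.\ Stout \cite[Theorem~1.3.8]{Stout-convexity}) to the minimal setting. Three ingredients suffice: \emph{(a)}~since $K=\wh K_\Mgot$, every $p\in\r^n\setminus K$ admits a separating function $u_p\in\MPsh(\r^n)$ with $u_p(p)>\max_K u_p$; \emph{(b)}~mollification by a smooth nonnegative radial kernel $\varphi_\varepsilon$ on $\r^n$ preserves minimal plurisubharmonicity, because restricting to an affine $2$-plane commutes with convolution; moreover, since for $n\ge 3$ the eigenvalue condition $e_1+e_2\ge 0$ on the ordered Hessian eigenvalues forces $\sum_i e_i\ge 0$, every $u\in\MPsh(\r^n)$ is in particular subharmonic on $\r^n$, whence the spherical submean inequality gives $u_p*\varphi_\varepsilon\ge u_p$ pointwise; \emph{(c)}~the function $|x|^2$ is smooth minimal strongly plurisubharmonic on $\r^n$ because $\Delta_L(|x|^2)=4>0$ on every affine $2$-plane $L$ in isothermal coordinates.

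Fix $R>0$ with $K\subset B(0,R)$, put $\psi(x)=|x|^2-R^2$, and pick a smooth convex nondecreasing function $\chi\colon\r\to[0,\infty)$ with $\chi\equiv 0$ on $(-\infty,0]$ and $\chi'(t)>0$ for $t>0$. For each $p\in\r^n\setminus K$, applying \emph{(a)}--\emph{(b)} and normalizing produces a smooth $u_p\in\MPsh(\r^n)$ with $u_p<-2$ on some open neighborhood $W_p$ of $K$ and $u_p(p)>2$. Choose $\delta_p>0$ small enough that $w_p:=u_p+\delta_p\psi$ still satisfies $w_p<-1$ on $W_p$ and $w_p(p)>1$; by \emph{(c)}, $w_p$ is smooth minimal strongly plurisubharmonic on $\r^n$. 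Then $v_p:=\chi\circ w_p$ is smooth and minimal plurisubharmonic (composition with a convex nondecreasing function), vanishes identically on $W_p$, and on the open neighborhood $N_p:=\{w_p>0\}\ni p$ satisfies, in isothermal coordinates on any affine $2$-plane $L$,
\begin{equation*}
\Delta_L v_p=\chi''(w_p)\,|\nabla_L w_p|^2+\chi'(w_p)\,\Delta_L w_p\ge 4\delta_p\,\chi'(w_p)>0,
\end{equation*}
so $v_p$ is minimal strongly plurisubharmonic on $N_p$.

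By compactness of $\overline{B(0,R)}\setminus U$, extract a finite subcover $N_{p_1},\ldots,N_{p_N}$ and set $w:=\sum_{j=1}^{N}v_{p_j}$; then $w\in\Cscr^\infty(\r^n,\r_+)\cap\MPsh(\r^n)$, vanishes on the open neighborhood $W:=\bigcap_j W_{p_j}$ of $K$, and is positive and minimal strongly plurisubharmonic on the open set $O:=\bigcup_j N_{p_j}\supset\overline{B(0,R)}\setminus U$ (each $x\in O$ lies in some $N_{p_j}$ where $v_{p_j}$ is strongly psh, while the remaining summands are minimal psh). Next choose a smooth convex nondecreasing $\chi_0\colon\r\to[0,\infty)$ with $\chi_0\equiv 0$ on $(-\infty,0]$, $\chi_0'(t)>0$ for $t>0$, and $\chi_0$ linear near $+\infty$, and put $\rho_0(x):=\chi_0(|x|^2-R^2)$; an analogous Laplacian computation shows that $\rho_0\in\Cscr^\infty(\r^n,\r_+)\cap\MPsh(\r^n)$ vanishes on $\overline{B(0,R)}\supset K$, is minimal strongly plurisubharmonic and positive on $\{|x|>R\}$, and tends to $+\infty$ as $|x|\to\infty$. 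The desired exhaustion is $\rho:=w+\rho_0$: it is smooth, nonnegative, minimal plurisubharmonic, identically zero on the open neighborhood $W\cap B(0,R)$ of $K$, and proper since $\rho\ge\rho_0\to\infty$; on $\r^n\setminus U$ it is positive and minimal strongly plurisubharmonic, because $w$ supplies strong pshness on the bounded part $\overline{B(0,R)}\setminus U\subset O$ while $\rho_0$ does so on the exterior $\{|x|>R\}$, and the other summand is always at least minimal psh. The main technical point is combining \emph{strong} minimal plurisubharmonicity on $\r^n\setminus U$ with the vanishing condition on a neighborhood of $K$, and this is arranged by the strongly psh correctors $\delta_p\psi$ placed inside each local bump $w_p$ together with the radial function $\rho_0$ that handles the exterior.
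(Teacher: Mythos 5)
Your proof is correct and takes exactly the route the paper indicates: the paper leaves this proof to the reader, pointing to the analogous construction for plurisubharmonic exhaustions in Stout's Theorem~1.3.8, and your argument is a faithful adaptation of that construction to the minimal-plurisubharmonic class (separating functions from the definition of $\wh K_\Mgot$, smoothing by radial mollification, a strongly convex corrector $\delta_p\,\psi$ and convex cutoff $\chi$ to form local bumps, finite covering of $\overline{B(0,R)}\setminus U$, and a radial tail $\rho_0$ for properness and exterior strong plurisubharmonicity). The only cosmetic point is that each $W_p$ should be taken bounded so that $\delta_p\psi$ is uniformly small there, but this is immediate and does not affect the argument.
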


Theorem \ref{th:minpshminorant} implies the following characterization of the minimal hull 
of a compact set in $\r^3$ by sequences of conformal minimal discs. 
This shows that our definition of the minimal hull is a natural one.
Another evidence to this effect is furnished by Theorem \ref{th:minimalcurrents} below
which identifies the minimal hull of a compact set in $\r^3$ with its minimal current hull.

%
%
%
%
\begin{corollary}\label{cor:minullhull}
Let $K$ be a compact set in $\r^3$, and let  $\omega\Subset \r^3$ be a bounded open convex set 
containing $K$. A point $p \in \omega$ belongs to the minimal hull $\widehat K_\Mgot$ of $K$ 
if and only if there exists a sequence of conformal minimal discs $f_j\in \Mgot(\d,\omega,p)$ such that 
\begin{equation}  \label{eq:Mdiscs}
	\big| \{t\in [0,2\pi]: \dist(f_j(e^{\imath t}),K)<1/j\}  \big| \ge 2\pi -1/j,
	\qquad j=1,2,\ldots.
\end{equation}
\end{corollary}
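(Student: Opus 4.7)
The proof follows the template of Corollary \ref{cor:DD} for null hulls, with Theorem \ref{th:minpshminorant} replacing Theorem \ref{th:nullpshminorant} and Lemma \ref{lem:characterizationMS} providing subharmonicity of $u \circ f$ for $u \in \MPsh$ and $f$ a conformal minimal disc.

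For the ``if'' direction, fix $u \in \MPsh(\r^3)$ and let $M = \sup_{\overline\omega} u < \infty$ by upper semicontinuity on the compact set $\overline\omega$. Setting $U_j = \{x \in \r^3 : \dist(x,K) < 1/j\}$ and $E_j = \{t \in [0,2\pi] : f_j(\E^{\imath t}) \notin U_j\}$, the hypothesis gives $|E_j| \le 1/j$, and the sub-mean value inequality for the subharmonic function $u \circ f_j$ at $0$ yields
\[
u(p) \;\le\; \frac{1}{2\pi}\int_{E_j} u(f_j(\E^{\imath t}))\,dt + \frac{1}{2\pi}\int_{[0,2\pi]\setminus E_j} u(f_j(\E^{\imath t}))\,dt \;\le\; \frac{M}{j} + \sup_{U_j} u.
\]
Letting $j \to \infty$ and using upper semicontinuity of $u$ gives $u(p) \le \sup_K u$, so $p \in \widehat K_\Mgot$.

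For the ``only if'' direction, note $p \in \widehat K_\Mgot \subset \Co(K) \subset \omega$ by (\ref{eq:comhulls}) and convexity. Choose an open set $U$ with $K \subset U \Subset \omega$ and define the upper semicontinuous function $\phi \colon \omega \to \r$ by $\phi = -1$ on $U$ and $\phi = 0$ on $\omega \setminus U$. Theorem \ref{th:minpshminorant} produces the largest minorant $u \in \MPsh(\omega)$ of $\phi$, represented by the disc formula (\ref{eq:minPoisson-funct}); clearly $-1 \le u \le 0$ on $\omega$ and $u \equiv -1$ on $K$. Granted that $u(p) = -1$ (the main technical point, addressed below), the disc formula supplies for every $\epsilon > 0$ some $f \in \Mgot(\d,\omega,p)$ with $\int_0^{2\pi} \phi(f(\E^{\imath t}))\,\frac{dt}{2\pi} < -1 + \epsilon/(2\pi)$, whence $|\{t : f(\E^{\imath t}) \in U\}| > 2\pi - \epsilon$. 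Applying this successively with the sets $U_j$ and $\epsilon_j = 1/j$ yields the required sequence.

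The crux is showing $u(p) = -1$, which reduces to proving a minimal-hull analogue of Lemma \ref{lem:Nhull-Runge} for the convex $\omega$:
\[
\widehat K_\Mgot \;=\; \{x \in \omega : v(x) \le \sup_K v \text{ for all } v \in \MPsh(\omega)\}.
\]
My plan is an extension argument paralleling Proposition \ref{lem:smoothing}(iv). Given $v \in \MPsh(\omega)$, first smooth by the standard mollification on a slightly shrunken subdomain (this preserves minimal plurisubharmonicity via Lemma \ref{lem:minimal} together with Proposition \ref{lem:smoothing}(ii)). Then use Proposition \ref{prop:Mexhaustion} to obtain a smooth $\rho \in \MPsh(\r^3)$ vanishing near $\widehat K_\Mgot$ and minimal strongly plurisubharmonic outside an open set $V \Subset \omega$ with $\widehat K_\Mgot \subset V$. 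Pick a smooth cutoff $\chi \colon \r^3 \to [0,1]$ equal to $1$ on $V$ and supported in $\omega$, and set $\tilde v = \chi v + C\rho$. For $C$ sufficiently large, $\tilde v \in \MPsh(\r^3)$: it equals $v$ on $V$ (where $\chi \equiv 1$ and $\rho \equiv 0$), equals $C\rho$ outside $\supp\chi$, and on the compact intermediate region $\supp\chi \setminus V$ the strong positivity of $C\rho$ along every affine $2$-plane dominates the Laplacian deficit of $\chi v$. Applied to $u$ with $p \in \widehat K_\Mgot$, this gives $\tilde u(p) \le \sup_K \tilde u = -1$, whence $u(p) = -1$. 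The main obstacle is precisely this dominance estimate across $\supp\chi \setminus V$, which is what forces us to smooth $u$ first so that $\chi u$ has controlled restriction to affine 2-planes.
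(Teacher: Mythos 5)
Your proposal is correct and follows essentially the same approach as the paper: the paper likewise reduces the "only if" direction to a localization of $\wh K_\Mgot$ via a minimal-plurisubharmonic analogue of Proposition~\ref{lem:smoothing}-(iv) (gluing $\chi v$ to a strongly positive exhaustion $C\rho$ from Proposition~\ref{prop:Mexhaustion}), and then runs the argument of Corollary~\ref{cor:DD} with Theorem~\ref{th:minpshminorant} in place of Theorem~\ref{th:nullpshminorant}. You are in fact slightly more careful than the paper's terse sketch in flagging that $v$ must first be mollified before the dominance estimate on $\supp\chi\setminus V$ can be applied, followed by a monotone limit in the mollification parameter to recover $u(p)=-1$.
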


\begin{proof}
Note that Proposition \ref{komp_psh_disc}-(iv) also holds for minimal plurisubharmonic functions,
with essentially the same proof: 
given a compact set $K$ in $\r^3$, an open convex set $\omega\subset \r^3$ 
containing $K$, and a function $u\in\MPsh(\omega)$, 
there exists a function $v\in\MPsh(\r^3)$ such that $v=u$ on  $K$, 
$v$ is strictly convex on $\r^3\setminus\omega$, and $v(x)>\max_K v$ for each $x\in\r^3\setminus\omega$.
This implies that, similarly to Lemma \ref{lem:Nhull-Runge}, we have 
\[
	\wh K_{\Mgot} =\{x\in \omega: v(x)\le \max_K v\ \ \ \forall v\in \MPsh(\omega)\}.
\]
Now Corollary \ref{cor:minullhull} follows from Theorem \ref{th:minpshminorant} by analogous 
observations as Corollary \ref{cor:DD} follows from Theorem \ref{th:nullpshminorant};
we leave out the obvious details.
\end{proof}

We end this section by explaining the relationship between the minimal hull and the null hull 
(see Definition \ref{def:nullhull}). 
Let $\pi\colon \c^n\to\r^n$ be the projection $\pi(x+\imath y)=x$.
By Lemma \ref{lem:minimal} a minimal plurisubharmonic function 
$u$ on $\r^n$ lifts to a null plurisubharmonic function $u\circ\pi$ on $\c^n$.
This implies that for any compact set $L\subset\c^n$ we have
\begin{equation}\label{eq:nullminullhull}
 	\pi \bigl(\wh L_{\Ngot}\bigr) \subset  \wh {\pi(L)}_\Mgot.
\end{equation}
The inclusion may be strict: take $L\subset\c^3$ to be a smoothly embedded Jordan curve 
such that $K=\pi(L)\subset \r^3$ is also a smooth Jordan curve. 
Then $K$ bounds a minimal surface $M$ which is therefore contained in 
$\wh {K}_\Mgot$. However, if for some point $p\in L$ the tangent line 
$T_p L$ does not belong to the null quadric $\Agot$ (\ref{eq:Agot}), 
then by Corollary \ref{cor:NHcurves} we have $\wh L_{\Ngot}=L$. 

For a more precise result in this direction see Corollary \ref{cor:currentnullhull} below.

%
%
%
%
%
\section{Green currents}
\label{sec:Gcurrents}

In this section we obtain some technical results that will be used in the sequel.
For the general theory of  currents we refer to Federer \cite{Federer} or Simon \cite{Simon}; 
see also Morgan \cite{Morgan} for a reader friendly introduction to the subject.

Let $\zeta =x+\imath y$ be the coordinate on $\c \cong\r^2$.
The {\em Green current}, $G$, on the closed unit disc $\cd$
is defined on any $2$-form $\alpha = adx\wedge dy$ 
with $a\in\Cscr(\cd)$ by
\begin{equation}\label{eq:Greencurrent}
	G(\alpha) = -\frac{1}{2\pi}  \int_\d \log|\zeta|\cdotp \alpha =
	- \frac{1}{2\pi}  \int_\d \log|\zeta|\cdotp a(\zeta) dx\wedge dy.
\end{equation}
Clearly $G$ is a positive current of bidimension $(1,1)$. 
For any function $u$ of class $\Cscr^2$ on a domain in $\c$ 
we have $dd^c u = 2\imath \di\bar\di u = \triangle u\cdotp dx\wedge dy$. 
Green's formula
\begin{equation}\label{eq:Green}
	u(0) = \frac{1}{2\pi} \int_0^{2\pi} u(e^{\imath t}) dt  + 
	\frac{1}{2\pi} \int_\d \log|\zeta| \cdotp \triangle u(\zeta) \, dx\wedge dy,
\end{equation}
which holds for any $u\in \Cscr^2(\cd)$, tells us that 
\[
	(dd^c G) (u) =G(dd^c u) = \frac{1}{2\pi} \int_0^{2\pi} u(e^{\imath t}) dt  - u(0).
\]
This means that $dd^c G = \sigma-\delta_0$, where $\sigma$ is the normalized 
Lebesgue measure on the circle $b\d=\t$ and $\delta_x$ is the evaluation at a point  $x$.
If  $u$ is subharmonic then 
\[
	0\le G(\triangle u\cdotp dx\wedge dy) = dd^c G (u) = \int_{\t} u\, d\sigma - u(0).
\]

Let $x=(x_1,\ldots, x_n)$ be the coordinates on $\r^n$.
Given a smooth map $f=(f_1,\ldots,f_n)\colon\cd \to\r^n$ we denote by 
$f_*G$ the 2-dimensional current on $\r^n$ given on any 2-form 
$\alpha=\sum_{i,j=1}^n a_{i,j} dx_i\wedge dx_j$  by 
\begin{equation}\label{eq:imageofG}
	(f_*G)(\alpha)=G(f^*\alpha) =
	-\frac{1}{2\pi}  \int_\d \log|\zeta| \cdotp f^*\alpha.
\end{equation}
Clearly $\supp (f_*G) \subset f(\cd)$. We call $f_*G$ the {\em Green current supported by $f$}.

Assume now that $f\colon\cd\to\c^n$ is a $\Cscr^2$ map that is 
holomorphic on $\d$. (Identifying $\c^n$ with $\r^{2n}$, the Cauchy-Riemann
equations imply that $f$ is conformal harmonic, except at the critical points
where $df=0$.) Since $f$ commutes with the $\dibar$-operator, and 
hence also with the conjugate differential 
$d^c=\imath(\dibar-\di)$, we get for any $u\in\Cscr^2(\c^n)$ that 
\[
	dd^c (f_*G) (u) = f_*G (dd^c u) = G(f^* dd^c u) =
	G(dd^c (u\circ f)) = \int_\t (u\circ f) d\sigma - (u\circ f)(0). 
\]
This gives the following well known formula:
\begin{equation}\label{eq:ddc}
	dd^c (f_*G) = f_* \sigma - \delta_{f(0)};
\end{equation}
see e.g.\ Duval and Sibony \cite[Example 4.9]{Duval-Sibony1995}. 

Recall the following representation theorem (see Federer \cite[\S 4.1.5--\S 4.1.7]{Federer}).
Associated to any $p$-dimensional current $T$ on $\r^n$ 
of finite mass  there is a positive Radon measure $||T||$ on $\r^n$ 
and a $||T||$-measurable frame of unit $p$-vectors $\vec{T}$ such that for any $p$-form $\alpha$ 
with compact support on $\r^n$ we have that
\begin{equation}\label{eq:Talpha}
	T(\alpha) = \int_{\r^n} \langle \alpha, \vec{T}\rangle \, d\,||T||.
\end{equation}
Here $\langle \alpha, \vec{T}\rangle$ denotes the value of $\alpha$
on the $p$-vector $\vec{T}$ (a $||T||$-measurable function on $\r^n$).
The mass $\M(T)$ of the current $T$ is then given by
\begin{equation}\label{eq:Tmass}
	\M(T) = \sup \{T(\alpha)\colon |\langle \alpha,\vec{T}\rangle|\le 1\}  = \int_{\r^n}  d\,||T||.
\end{equation}
In particular, every current of finite mass is representable by integration.

Wold proved  that for any {\em holomorphic} disc 
$f\colon \cd\to\c^n$ the mass $\M(f_*G)$ is bounded in terms of the dimension $n$ 
and of $\sup_{\zeta\in \cd} |f(\zeta)|$ (cf.\ \cite[Lemma 2.2]{Wold2011}). 
The following lemma  gives an explicit formula for a bigger  class  of Green currents.
What is important for our purposes is that $\M(f_*G)$ is bounded by the $L^2$-norm 
of $f$ on the circle.

%
%
%
%
\begin{lemma}\label{lem:mass}
If $f=(f_1,\ldots,f_n)\colon\cd \to\r^n$  is a conformal harmonic immersion 
of class $\Cscr^2(\cd)$, then the mass of the Green current $f_*G$ satisfies
\begin{equation}\label{eq:massT}
	\M(f_*G) \le \frac{1}{4} \left( \int_\t |f|^2 d\sigma - |f(0)|^2 \right).
\end{equation}
If $f$ is injective outside of a closed set of measure zero in $\cd$, or if $f\colon \cd\to\c^n$ 
is a holomorphic disc, then we have equality in (\ref{eq:massT}).
\end{lemma}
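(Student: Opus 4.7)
The plan is first to obtain the upper bound from a pointwise comparison of pullbacks, then to convert the resulting volume integral over $\d$ into a boundary integral via Green's formula. Starting from the representation (\ref{eq:imageofG}) of $f_*G$, observe that for any $2$-form $\alpha$ on $\r^n$ of comass at most $1$ the pullback $f^*\alpha$ is a $2$-form on the disc and therefore of the shape $h(\zeta)\, dx\wedge dy$; the pointwise bound $|h(\zeta)|\le J_f(\zeta)$ holds, where $J_f$ is the area-distortion factor of $f$ viewed as a map to $\r^n$, because $df_\zeta$ carries an orthonormal frame to an orthogonal pair of vectors of common length $\sqrt{J_f(\zeta)}$ by conformality. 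Since $-\log|\zeta|\ge 0$ on $\d$, taking the supremum over such $\alpha$ in the mass formula (\ref{eq:Tmass}) gives
\[
   \M(f_*G)\;\le\;-\frac{1}{2\pi}\int_\d \log|\zeta|\, J_f(\zeta)\, dx\wedge dy.
\]

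Next I would rewrite the right-hand side. Conformality gives $J_f = |\partial_x f|^2 = \tfrac{1}{2}|\nabla f|^2$, and the harmonicity of each component $f_j$, together with the identity $\triangle(f_j^2) = 2|\nabla f_j|^2 + 2 f_j \triangle f_j$, gives $|\nabla f_j|^2 = \tfrac{1}{2}\triangle(f_j^2)$; summing over $j$ produces $J_f = \tfrac{1}{4}\triangle|f|^2$. Applying Green's formula (\ref{eq:Green}) to the $\Cscr^2$ function $u = |f|^2$ on $\cd$ then yields
\[
   -\frac{1}{2\pi}\int_\d \log|\zeta|\, \triangle|f|^2\, dx\wedge dy \;=\; \int_\t |f|^2\, d\sigma - |f(0)|^2,
\]
and combining the two displays gives the asserted bound (\ref{eq:massT}).

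The equality statements amount to producing a specific $2$-form that saturates the pointwise bound $|h|=J_f$ almost everywhere. In the holomorphic case, the K\"ahler form $\omega = \sum_{j=1}^n dx_j\wedge dy_j$ on $\c^n$ has comass $1$ by Wirtinger's inequality, and a direct Cauchy--Riemann computation gives $f^*\omega = \bigl(\sum_j |f'_j|^2\bigr)\, dx\wedge dy = J_f\, dx\wedge dy$; hence the sole inequality in the argument becomes an equality with $\alpha = \omega$. In the a.e.-injective case, I would use the representation (\ref{eq:Talpha}): the current $f_*G$ is supported on the rectifiable set $f(\cd)$ with oriented tangent $2$-plane $\vec T$ induced from $f$; since $f$ is an immersion injective off a closed null set, the classical area formula identifies the mass measure $\|f_*G\|$ as the push-forward of $-\tfrac{1}{2\pi}\log|\zeta|\, J_f\, dx\wedge dy$ by $f$, giving $\M(f_*G) = -\tfrac{1}{2\pi}\int_\d \log|\zeta|\, J_f\, dx\wedge dy$.

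The step I expect to be the main obstacle is the equality assertion in the a.e.-injective case: one must argue that almost-everywhere injectivity prevents any orientation cancellation from the multiplicity function, so that the push-forward $f_*G$ really has the expected mass. The remaining ingredients --- the comass-based pullback bound, the differential identities for harmonic functions, and one application of Green's formula --- are routine.
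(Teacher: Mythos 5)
Your proof of the inequality is correct and follows essentially the same route as the paper's: conformality gives the pointwise bound $|\langle\alpha, f_x\wedge f_y\rangle|\le J_f$ for comass-$1$ forms $\alpha$, harmonicity turns $J_f\,dx\wedge dy$ into $\tfrac14\triangle|f|^2\,dx\wedge dy=\tfrac14\,dd^c|f|^2$ via $\triangle(f_j^2)=2|\nabla f_j|^2$, and Green's formula does the rest. One small remark on the equality statements, which the paper itself defers to the remark following the lemma rather than proving inside it. Your Wirtinger argument for the holomorphic case --- taking $\alpha$ to be a compactly supported cut-off of the K\"ahler form $\sum_j dx_j\wedge dy_j$ so that $f^*\alpha=J_f\,dx\wedge dy$ exactly --- is a cleaner, more explicit version of the paper's one-line assertion that the complex orientation forbids cancellation. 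For the a.e.-injective case the paper avoids the multiplicity/cancellation subtlety you correctly flag: for any open $V\supset E$ it constructs a $2$-form $\alpha$ with $|\langle\alpha,\vec T\rangle|\le1$ on $f(\cd)$ and $\langle\alpha,\vec T\rangle=1$ on $f(\cd\setminus V)$, then shrinks $V$ to $E$; this sidesteps invoking the area formula for the push-forward current and is worth adopting if you want an argument that does not implicitly assume $\|f_*G\|$ is the naive push-forward measure.
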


\begin{proof} 
Denote the partial derivatives of $f:\cd\to\r^n$ by $f_x$ and $f_y$.  Write 
\[
	|f|^2=\sum_{i=1}^n f_i^2, \qquad 
	|\nabla f|^2=\sum_{i=1}^n |\nabla f_i|^2 = 
	\sum_{i=1}^n \left(f_{i,x}^2 + f_{i,y}^2 \right).
\]
We identify $f_x$ and $f_y$ with the vector fields $f_*\frac{\partial}{\partial x}$ and 
$f_*\frac{\partial}{\partial y}$, respectively.  
Since $f$ is conformal,  these vector fields  are orthogonal and satisfy $|f_x|=|f_y|$. Let
\begin{equation} \label{eq:vecT}
	\vec{T}=\frac{f_x\wedge f_y}{|f_x|\cdotp |f_y|} =\frac{f_x\wedge f_y}{|f_x|^2}.
\end{equation}
Given a $2$-form $\alpha$ on $\r^n$, we have
\begin{equation} \label{eq:f*alpha}
	f^*\alpha = \langle\alpha\circ f, f_x\wedge f_y\rangle \, dx \wedge dy =
	 \langle \alpha\circ f, \vec{T}\rangle\, |f_x|^2 \, dx\wedge dy.
\end{equation}
The definition of $T=f_* G$ (\ref{eq:imageofG})  and the formula (\ref{eq:f*alpha}) imply
\begin{equation} \label{eq:Talpha2}
	 T(\alpha) = -\frac{1}{2\pi}  \int_\d \log|\zeta| \cdotp 
	 \langle \alpha\circ f, \vec{T}\rangle \cdotp  |f_x|^2 \, dx\wedge dy.
\end{equation}
From the definition of the mass of a current and (\ref{eq:Talpha2}) it follows that
\begin{equation}\label{eq:mass}
	\M(T) = \sup \{T(\alpha)\colon |\langle \alpha,\vec{T}\rangle|\le 1\}
	\le -\frac{1}{2\pi}  \int_\d \log|\zeta| \cdotp  |f_x|^2 \, dx\wedge dy.
\end{equation}

So far we have only used the hypothesis that $f$ is conformal.
At this point we take into account that $f$ is also harmonic. 
For any harmonic function $v \in \Cscr^2(\cd)$ we have
\[ 
	dd^c v^2 = d(2v d^c v) = 2 dv\wedge d^c v = 2|\nabla v|^2 dx\wedge dy.
\]
Applying this to each component $f_i$ of the harmonic map $f=(f_1,\ldots,f_n)$ we get 
\[
	|\nabla f|^2 \, dx\wedge dy = \sum_{i=1}^n | \nabla f_i|^2 \, dx\wedge dy 
	= \frac{1}{2}\sum_{i=1}^n dd^c f^2_i.
\]
Inserting the identity
$
	|f_x|^2 \, dx\wedge dy = \frac{1}{2} |\nabla f|^2  \, dx\wedge dy 
	= \frac{1}{4} \sum_{i=1}^n dd^c f^2_i
$
into  (\ref{eq:mass}) and applying Green's identity
(\ref{eq:Green}) gives the inequality  (\ref{eq:massT}) for $\M(f_*G)$.
\end{proof}

%
%
%
%
\begin{remark}\label{rem:loss}  
The formula (\ref{eq:mass}) shows that the mass measure of $T=f_*G$  (\ref{eq:Talpha}) equals  
\begin{equation}\label{eq:massmeasure}
	||T||(U)= -\frac{1}{2\pi}  \int_{f^{-1}(U)} \log|\zeta| \cdotp  |f_x|^2 \, dx\wedge dy
\end{equation}
for any open set $U\subset\r^n$ such that $f$ is injective on $f^{-1}(U)\subset\cd$.
The possible loss of mass, leading to a strict inequality in (\ref{eq:mass}) and hence in (\ref{eq:massT}),
may be caused by the cancellation of parts of the immersed surface $f(\cd)$ 
(considered as a current) due to the reversal of the orientation of the frame field $\vec T$. 
(This can happen for example by immersing the disc conformally onto a M\"obius band in $\r^3$
and letting $\vec T$  be its tangential frame field.)
If $f$ is injective outside a closed set $E\subset \cd$ of measure zero, then for any open set
$V\supset E$ we can easily find a $2$-form $\alpha$ such that $|\langle \alpha,\vec{T}\rangle| \le 1$
on $f(\cd)$ and $\langle \alpha,\vec{T}\rangle = 1$ on $f(\cd\setminus V)$. 
By shrinking $V$ down to $E$ we see that the inequality in (\ref{eq:mass}) 
becomes an equality. A holomorphic disc $f\colon \cd\to\c^n$
carries a canonical orientation induced by the complex structure,
so there is no cancellation of mass in the current $f_*G$.
On the other hand, the total mass of $f(\cd)$, counted with multiplicities and 
with the weight induced by  $-\log|\cdotp|$, 
always equals the expression on the right hand side of (\ref{eq:massT}).
\qed\end{remark}

Given a $\Cscr^2$ function $u$ on a domain in $\r^n$, we denote by 
\[
	\Hess u=\left(\frac{\partial^2 u}{\partial x_i \partial x_j}\right)
\]
its Hessian, a quadratic form on $T_x\r^n$.
If $\vec{T}_x$ is a unit $p$-frame at $x$, we denote by  $\tr_{\vec{T}_x} (\Hess u)$ 
the trace of the Hessian of $u$ restricted to the $p$-plane $\span\,\vec{T}_x\subset T_x\r^n$.

%
%
%
%
\begin{lemma}\label{lem:ddc-uf}
For every conformal harmonic immersion $f\colon \cd\to\r^n$ we have
\begin{equation}\label{eq:ddcuf}
	dd^c(u\circ f) = \left(\tr_{\vec{T}} (\Hess u)\circ f \right) \cdotp  |f_x|^2\, dx\wedge dy,
\end{equation}
where $\vec{T}$ is the unit $2$-frame along $f(\cd)$ given by (\ref{eq:vecT}).
\end{lemma}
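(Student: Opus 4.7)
The plan is to reduce the identity to a direct chain-rule computation, using only two facts about $f$: conformality (so $f_x,f_y$ are orthogonal and of equal length) and harmonicity ($\triangle f_i = 0$ for each component).

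First I would record that for any $\Cscr^2$ function $g$ on a planar domain, $dd^c g = \triangle g\cdotp dx\wedge dy$, so the claim (\ref{eq:ddcuf}) is equivalent to the pointwise identity
\[
	\triangle(u\circ f)(\zeta) = \bigl(\tr_{\vec{T}(\zeta)}(\Hess u)\bigl(f(\zeta)\bigr)\bigr)\cdotp |f_x(\zeta)|^2
\]
at every $\zeta \in \d$. Next I would compute the left-hand side by the chain rule: writing $u_i = \di u/\di x_i$ and $u_{ij}=\di^2 u/\di x_i\di x_j$,
\[
	\triangle(u\circ f)= \sum_{i,j=1}^n (u_{ij}\circ f)\bigl(f_{i,x}f_{j,x}+f_{i,y}f_{j,y}\bigr)
	\;+\; \sum_{i=1}^n (u_i\circ f)\,\triangle f_i .
\]
Since $f$ is harmonic the second sum vanishes, and the first sum is exactly
\[
	f_x^\top (\Hess u\circ f)\,f_x + f_y^\top (\Hess u\circ f)\,f_y .
\]

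Finally I would invoke conformality: $f_x\perp f_y$ and $|f_x|=|f_y|$, so $\{f_x/|f_x|,\,f_y/|f_x|\}$ is an orthonormal basis of the $2$-plane $\span\,\vec{T}=\span\{f_x,f_y\}\subset T_{f(\zeta)}\r^n$. By definition of the trace restricted to this plane,
\[
	\tr_{\vec{T}}(\Hess u)\circ f
	= \frac{f_x^\top(\Hess u\circ f)\,f_x}{|f_x|^2} + \frac{f_y^\top(\Hess u\circ f)\,f_y}{|f_x|^2},
\]
which combined with the previous display gives $\triangle(u\circ f) = |f_x|^2\cdotp(\tr_{\vec{T}}(\Hess u)\circ f)$, proving (\ref{eq:ddcuf}).

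There is really no serious obstacle here; the only point that requires care is that both hypotheses on $f$ are used in essential and separate ways — harmonicity to kill the first-derivative term from the chain rule, and conformality (together with $|f_x|=|f_y|$) to recognize $f_x/|f_x|$ and $f_y/|f_x|$ as an orthonormal frame so that the sum of the two quadratic forms genuinely is the trace of $\Hess u$ along $\span\,\vec T$. The identity fails in general for immersions that are only harmonic or only conformal.
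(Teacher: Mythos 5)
Your proof is correct and is essentially the paper's proof, merely written with the Laplacian $\triangle$ instead of the equivalent pair $d,d^c$ of operators on forms; since $dd^c g = \triangle g\,dx\wedge dy$ on a planar domain, your chain-rule computation, the vanishing of the first-order term by harmonicity, and the identification of $f_x^\top(\Hess u)f_x + f_y^\top(\Hess u)f_y$ with $|f_x|^2\,\tr_{\vec T}(\Hess u)$ via conformality, match the paper's steps line for line.
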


\begin{proof}
Note that $d^c(u\circ f) = \sum_{j=1}^n \left(\frac{\di u}{\di x_j}\circ f\right) \cdotp d^c f_j$ 
and hence
\[
	dd^c(u\circ f) = \sum_{i,j=1}^n  d\left( \frac{\di u}{\di x_j}\circ f\right) \wedge d^c f_j
	= \sum_{i,j=1}^n \left(\frac{\di^2 u}{\di x_i \di x_j}\circ f \right) \cdotp df_i\wedge d^c f_j.
\]
(We used that $dd^c f_j=0$ since $f$ is harmonic.)  We also have 
\[
	df_i\wedge d^c f_j = \left(f_{i,x} dx + f_{i,y} dy\right) \wedge  
	\left(-f_{j,y} dx + f_{j,x} dy\right) = 
	\left(f_{i,x}f_{j,x}   +f_{i,y}  f_{j,y} \right) dx\wedge dy.
\]
Inserting this identity into the previous formula yields
\begin{eqnarray*}
	dd^c(u\circ f) &=& \sum_{i,j=1}^n \left(\frac{\di^2 u}{\di x_i \di x_j}\circ f\right)  
	\bigl(f_{i,x}f_{j,x}   +f_{i,y}  f_{j,y} \bigr) dx\wedge dy \cr
	 &=& \left(f_x^t \cdotp  (\Hess u)\circ f  \cdotp f_x +  f_y^t \cdotp  (\Hess u)\circ f  \cdotp f_y  \right) dx\wedge dy \cr
	&=&  \left(\tr_{\vec{T}} (\Hess u)\circ f \right) \cdotp  |f_x|^2\, dx\wedge dy.
\end{eqnarray*}	 
(We used that $|f_x|=|f_y|$ and $f_x \cdotp f_y=0$ since $f$ is conformal.) 
This gives (\ref{eq:ddcuf}). 
\end{proof}

\begin{remark}\label{rem:Laplace} 
The formula (\ref{eq:ddcuf}) actually means that 
\[
	\tr_{\vec{T}} (\Hess u)= \triangle_M (u|_M), 
\]
where $\triangle_M$ is the Laplace-Beltrami operator on the minimal 
surface $M= f(\cd)\subset \r^n$ in the induced metric.
(See e.g.\ \cite[Proposition 2.10]{Harvey-Lawson2009} or 
(2.10) in \cite{Harvey-Lawson2013}.  If $M$ is not minimal then
there is an error term; see (2.9) in \cite{Harvey-Lawson2013}.)
\qed\end{remark}

%
%
%
%
\begin{lemma}\label{lem:Hessian}
Let $T=f_*G$ be the Green current (\ref{eq:imageofG}) on $\r^n$ supported by a conformal
harmonic immersion $f:\cd\to\r^n$, and let $\vec{T}$ be a unit tangential $2$-frame 
along $f(\cd)$. For every $\Cscr^2$ function $u$ on a neighborhood of $f(\cd)$
we have that
\begin{equation}\label{eq:Hess}
	\int_{\r^n} \tr_{\vec{T}} (\Hess u) \, d\, ||T|| \le  
	- \frac{1}{2\pi}  \int_\d \log|\zeta|\cdotp dd^c(u\circ f)
	=\int_0^{2\pi} \! \! u\bigl(f(e^{\imath t})\bigr) \frac{dt}{2\pi} - u(f(0)).	
\end{equation}
The equality holds under the same conditions as in Lemma \ref{lem:mass}.
\end{lemma}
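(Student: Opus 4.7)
The rightmost equality is a direct application of Green's formula (\ref{eq:Green}) to the $\Cscr^2$ function $u\circ f$ on $\cd$: indeed $dd^c(u\circ f) = \triangle(u\circ f)\,dx\wedge dy$, and rearranging Green's identity yields exactly the claimed identity.

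For the middle inequality, I would first substitute the formula from Lemma \ref{lem:ddc-uf} to rewrite
\[
	-\frac{1}{2\pi}\int_\d \log|\zeta|\cdotp dd^c(u\circ f)
	= \int_\d \bigl(\tr_{\vec T(\zeta)}(\Hess u)\bigr)\bigl(f(\zeta)\bigr)\, d\mu(\zeta),
\]
where $\mu := -\frac{1}{2\pi}\log|\zeta|\cdotp |f_x|^2\, dx\wedge dy$ is a nonnegative Radon measure on $\d$ and $\vec T(\zeta) = f_x\wedge f_y/|f_x|^2$ is the pointwise tangential 2-frame from (\ref{eq:vecT}). This identifies the middle expression with a weighted integral of $\tr_{\vec T}(\Hess u)$ pulled back to $\d$.

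To connect to the left side, I would use the identity (\ref{eq:Talpha2}): for any bounded Borel 2-form $\alpha$ on $\r^n$,
\[
	T(\alpha) = \int_\d \langle\alpha\circ f,\vec T(\zeta)\rangle\, d\mu(\zeta) = \int_{\r^n}\langle\alpha,\vec T\rangle\, d\,||T||,
\]
where on the right $\vec T$ is the unit simple $2$-vector field provided by the representation theorem. As spelled out in Remark \ref{rem:loss}, the mass measure $||T||$ equals the pushforward $f_*\mu$ on regions where $f$ is injective, but can be strictly smaller in general, the discrepancy being caused by orientation-reversing overlaps at multiple preimages. Since $\tr_{\vec T}(\Hess u)$ depends on $\vec T$ only through the $2$-plane it spans (and not on orientation or sign), preimages whose tangent planes coincide but whose orientations cancel in the current $T$ nevertheless contribute additively to the $\mu$-integral on $\d$, yielding the asserted inequality.

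The main obstacle will be the measure-theoretic bookkeeping in this last step: decomposing $\d$ outside a null set into countably many regions on which $f$ is a local diffeomorphism, then applying the area formula carefully to identify the $\mu$-integral on $\d$ with the $||T||$-integral on $\r^n$ plus a nonnegative remainder supported on the cancellation locus. Equality then follows in the two cases of Lemma \ref{lem:mass}: if $f$ is injective outside a set of measure zero no collisions occur, and if $f$ is holomorphic the canonical complex orientation precludes any orientation-reversing overlap, so in both cases $||T|| = f_*\mu$ and the remainder vanishes.
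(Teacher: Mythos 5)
Your proof follows the same route as the paper's: Green's formula (\ref{eq:Green}) applied to $u\circ f$ for the rightmost equality, Lemma \ref{lem:ddc-uf} (identity (\ref{eq:ddcuf})) to rewrite the middle term as a $\mu$-weighted integral of $\tr_{\vec T}(\Hess u)\circ f$ over $\d$, and the mass-measure description (\ref{eq:massmeasure}) together with Remark \ref{rem:loss} for the leftmost inequality and the equality conditions. The paper's proof is in fact considerably terser than yours (three lines: quote (\ref{eq:massmeasure}), cite the equality conditions, cite (\ref{eq:ddcuf})); you have spelled out the machinery it implicitly invokes.

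One remark on the step you flag as the "main obstacle". Your justification of the inequality reads that cancelling preimages "contribute additively to the $\mu$-integral on $\d$, yielding the asserted inequality." Observe that this reasoning, as phrased, is only conclusive at points of the cancellation locus where $\tr_{\vec T}(\Hess u)\ge 0$: the orientation-reversing overlap shrinks $\|T\|$ relative to $f_*\mu$, so for a nonnegative integrand the $\|T\|$-integral drops, but for a negative integrand it would rise. This is precisely what distinguishes (\ref{eq:Hess}) from the scalar mass estimate (\ref{eq:massT}), where the integrand is identically $1$. The paper's one-line appeal to (\ref{eq:massmeasure}) is equally silent on this sign issue, so your write-up does not diverge from the source — but when you carry out the "measure-theoretic bookkeeping" you announce, you should either restrict the inequality to the situation where it is used downstream (for instance via (\ref{eq:Hess2}), where $T$ is redefined as the functional (\ref{eq:Th}) on $\Qcal(\r^3)$ directly, and only equality in the middle is needed), or argue separately for the two signs of the trace on the cancellation set.
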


\begin{proof}
Let $\vec T$ be as in (\ref{eq:vecT}). From the expression (\ref{eq:massmeasure}) 
for  $||T||$ we get that
\[
	\int_{\r^n} \tr_{\vec{T}} (\Hess u) \cdotp d\, ||T|| \le
		 -\frac{1}{2\pi}  
	\int_\d \log |\zeta| \cdotp\left( \tr_{\vec{T}} (\Hess u)\circ f \right) \cdotp  |f_x|^2\, dx\wedge dy.
\]
The equality holds under the same conditions as described in Lemma \ref{lem:mass}.
Now (\ref{eq:Hess}) follows immediately  from the identity (\ref{eq:ddcuf}).
\end{proof}

Lemma \ref{lem:Hessian} gives another proof of the mass inequality (\ref{eq:mass})  for $\M(f_*G)$: 
apply  (\ref{eq:Hess}) with the function $u(x)=\sum_{j=1}^n x_j^2$ and observe that
$\tr_{\vec{T}} (\Hess u)\equiv 4$ for any $\vec T$.

%
%
%
%
%
\section{Characterizations of  minimal hulls and null hulls by Green currents}
\label{sec:currents}

Recall that a $2$-form $\alpha$ on a domain $\Omega\subset \c^n$ is said to be {\em positive}
if for every point $p\in \Omega$ and vector $\nu\in T_p\c^n$ 
we have $\langle \alpha(p), \nu\wedge J\nu\rangle \ge 0$. (Here
$J$ denotes the standard complex structure operator on $\c^n$.) 
Let $\alpha=\alpha^{2,0}+\alpha^{1,1}+\alpha^{0,2}$ be  a decomposition 
according to type. Since $\langle \alpha(p), \nu\wedge J\nu\rangle$ vanishes for forms
of type $(2,0)$ and $(0,2)$, we see that $\alpha$ is positive if and only if $\alpha^{1,1}$ is. 
A current $T$ of bidimension $(1,1)$ on $\c^n$ is positive if $T(\alpha)\ge0$ 
for every positive $(1,1)$ form $\alpha$ with compact support.

%
%
%
%
\begin{definition} 
A $2$-form $\alpha$ on a domain $\Omega\subset \c^n$ is  {\em null positive}
if for every point $p\in \Omega$ and null vector $\nu\in \Agot^*$ (\ref{eq:Agot})
we have $\langle \alpha(p), \nu\wedge J\nu\rangle \ge 0$. (We identify
$\nu$ with a tangent vector in $T_p\c^n$, and $J$ denotes the complex structure operator.) 
A $(1,1)$-current $T$ on $\c^n$ is null positive if $T(\alpha)\ge 0$ for every null positive 
$2$-form $\alpha$ with compact support.
\end{definition} 

Note that a function $u$ of class $\Cscr^2$ is null plurisubharmonic (see Definition \ref{def:npsh})
if and only if the (1,1)-form $dd^c u$ is null positive.

If $f\colon \cd\to\c^n$ is a null holomorphic disc, then $T=f_* G$ is a null positive current.
Indeed, if $\alpha$ is a null positive $2$-form and $\vec{T}$ is a positively oriented orthonormal 
frame field along $f$, then $\langle \alpha, \vec{T}\rangle\ge 0$ and
hence $T(\alpha)=\int \langle \alpha, \vec{T}\rangle \, d\,||T||\ge 0$ by (\ref{eq:Talpha}).

The following result is analogous to the characterization of the polynomial hull,
due to  Duval and Sibony \cite{Duval-Sibony1995} and Wold \cite[Theorem 2.3]{Wold2011}
(see Theorem \ref{th:DS} above).

%
%
%
%
\begin{theorem} 
\label{th:nullcurrents}
Let $K$ be a compact set in $\c^3$. A point $p\in\c^3$ belongs to the null hull 
$\wh K_\Ngot$ of $K$ (\ref{eq:nullhull}) if and only if 
there exists a null positive $(1,1)$-current $T$ on $\c^3$ with compact support  satisfying 
$dd^c T=\mu-\delta_p$, where $\mu$ is a probability measure  on $K$ and $\delta_p$
is the point mass at $p$, such that
\begin{equation}\label{eq:submeanvalue} 
	u(p) \le \int_K \! u \, d\mu  \qquad \forall  u\in \NPsh(\c^3).
\end{equation}
The support of any such current $T$ is contained in the null hull $\wh K_\Ngot$ of $K$.
\end{theorem}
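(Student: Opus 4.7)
The plan is to prove the two directions of the equivalence separately and then deduce the support statement from a positivity argument in the style of Duval--Sibony.

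For the \emph{if} direction, suppose such a current $T$ exists. The hypothesis (\ref{eq:submeanvalue}) yields, for every $u\in\NPsh(\c^3)$,
\[
    u(p) \le \int_K u\,d\mu \le \sup_K u,
\]
since $\mu$ is a probability measure on the compact set $K$ and the upper semicontinuous function $u$ is bounded above on $K$. Hence $p\in\wh K_\Ngot$.

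For the \emph{only if} direction, fix a ball $\Omega\Subset\c^3$ containing $K\cup\{p\}$ (it is automatically pseudoconvex and Runge). Since $p\in\wh K_\Ngot$, Corollary \ref{cor:DD} furnishes a sequence of null discs $f_j\in\Ngot(\d,\Omega,p)$ whose boundaries converge to $K$ in the sense of (\ref{eq:Ndiscs}). Set $T_j=(f_j)_*G$ and $\mu_j=(f_j)_*\sigma$; by (\ref{eq:ddc}) we have $dd^c T_j=\mu_j-\delta_p$. Lemma \ref{lem:mass} gives a uniform bound $\M(T_j)\le \tfrac14\bigl(\int_\t|f_j|^2\,d\sigma-|p|^2\bigr)\le C$, since $f_j(\t)\subset\Omega$ is bounded. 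The compactness theorem for currents of bounded mass yields a subsequence $T_j\to T$ weakly with $\supp T\subset\overline\Omega$, and simultaneously $\mu_j\to\mu$ weakly as Borel probability measures on $\overline\Omega$. Continuity of $dd^c$ under weak convergence gives $dd^c T=\mu-\delta_p$, while (\ref{eq:Ndiscs}) forces $\supp\mu\subset K$.

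Next I would verify null positivity and the sub-mean value inequality. For a null positive $2$-form $\alpha$, the pullback $f_j^*\alpha$ is a nonnegative multiple of $dx\wedge dy$ because the tangent space of $f_j(\cd)$ at each point is a null complex line; hence $T_j(\alpha)=G(f_j^*\alpha)\ge 0$, a property that passes to weak limits, making $T$ null positive. For smooth $u\in\NPsh(\c^3)$, the form $dd^c u$ is null positive, so
\[
    \int_K u\,d\mu - u(p) \;=\; T(dd^c u) \;\ge\; 0.
\]
The extension to general $u\in\NPsh(\c^3)$ follows by smoothing on a neighborhood of $K\cup\{p\}\cup\supp T$ via Proposition \ref{lem:smoothing}(ii) (combined with part (iv) to globalize), producing a decreasing sequence of smooth null plurisubharmonic approximations, and then applying monotone convergence on both sides of the smooth inequality.

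Finally, for the support statement, suppose $q\in\supp T\setminus\wh K_\Ngot$. Since $\wh K_\Ngot$ is null convex, Proposition \ref{prop:exhaustion} yields a smooth $\rho\in\NPsh(\c^3)$ with $\rho\equiv 0$ on a neighborhood of $\wh K_\Ngot$ (in particular near $K$ and at $p$) and $\rho$ strongly null plurisubharmonic on a neighborhood $U$ of $q$. Then $T(dd^c\rho)=\int\rho\,d\mu-\rho(p)=0$, whereas null positivity of $T$ together with strong null positivity of $dd^c\rho$ on $U$ and $\|T\|(U)>0$, via a standard localization argument in the spirit of Duval--Sibony \cite{Duval-Sibony1995} and Harvey--Lawson \cite{Harvey-Lawson20091}, forces $T(dd^c\rho)>0$, a contradiction. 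The main obstacle I foresee is precisely this support statement: although the tangent frame of each $T_j$ is directed by a null complex line at every point, this pointwise property need not pass to the weak limit $T$, so converting the strong null positivity of $dd^c\rho$ on $U$ into strict positivity of $T(dd^c\rho)$ requires a careful analysis of the $\|T\|$-a.e.\ frame of $T$. A secondary technical point is the passage from smooth to general $u\in\NPsh(\c^3)$ in the sub-mean value inequality, which needs global smoothing while respecting the pole set $\{u=-\infty\}$.
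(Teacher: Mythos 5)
Your proof follows essentially the same route as the paper's: the ``if'' direction is the maximum principle, the ``only if'' direction passes to a weak$^*$ limit of the Green currents $T_j=(f_j)_*G$ attached to the Poletsky sequence $f_j$ from Corollary \ref{cor:DD}, with the uniform mass bound coming from Lemma \ref{lem:mass}, and the support statement is established by testing against the exhaustion function of Proposition \ref{prop:exhaustion}. The one place you flag as the main obstacle --- whether the pointwise null-direction property of the frames $\vec T_j$ passes to the weak limit --- is a misdiagnosis of where the argument lives, and the issue is already taken care of by Proposition \ref{prop:support}(a). Nothing needs to be inherited from the approximating sequence: null positivity of the limit current $T$ (which you already verify, since $T(\alpha)=\lim_j T_j(\alpha)\ge 0$ for every null positive $\alpha$) by itself forces the frame $\vec T(z)$ to lie, for $||T||$-a.e.\ $z$, in the closed convex cone generated by $\{\nu\wedge J\nu\colon\nu\in\Agot^*\}$, which is precisely the dual cone to the cone of null positive $2$-covectors; one sees this by testing $T$ against $\phi\cdotp\beta$ with $\phi\ge 0$ compactly supported and $\beta$ a constant null positive form. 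On a closed ball $\overline B\subset U$ the strong null plurisubharmonicity of $\rho$ gives $\langle dd^c\rho(z),\nu\wedge J\nu\rangle\ge c>0$ uniformly for $z\in\overline B$ and unit $\nu\in\Agot^*$, and since every unit element of the dual cone is a nonnegative combination of such simple $2$-vectors with total coefficient at least one, the same lower bound holds on the whole dual cone. Hence $0=T(dd^c\rho)\ge c\,||T||(B)$, forcing $||T||(B)=0$. Your secondary remark about extending (\ref{eq:submeanvalue}) from smooth $u$ to general $u\in\NPsh(\c^3)$ via Proposition \ref{lem:smoothing}(ii) and (iv) is a genuine detail the paper leaves implicit, and your resolution is fine.
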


\begin{proof} 
If $\mu$ is a probability measure satisfying  (\ref{eq:submeanvalue}),  
then $u(p) \le \int_K \! u d\mu \le\max_K u$ for every $u\in \NPsh(\c^3)$, and hence 
$p\in \wh K_\Ngot$. (This implication holds on $\c^n$ for any $n\ge 3$.)

For the converse implication we follow  Wold \cite{Wold2011}.
Choose a ball $\Omega\subset \c^3$ containing $K$. 
Let $f_j\in \Ngot(\d,\Omega,p)$ be a sequence of null discs furnished by Corollary \ref{cor:DD}.
The Green currents $T_j=(f_j)_*(G)$ have uniformly bounded masses
by Lemma \ref{lem:mass}. Consider the $T_j$'s as continuous linear functionals
on the separable Banach space of differential forms on $\c^3$
with the sup-norm topology. A bounded set of functionals is metrizable \cite[V.5.1]{Conway}, 
and hence the weak* compactness of a closed bounded set of currents
coincides with the sequential weak* compactness. 
Hence a subsequence of $T_j$ converges weakly
to a null positive $(1,1)$-current $T$ with compact support
and finite mass. By (\ref{eq:ddc}) we have $dd^c T_j =(f_j)_*\sigma -\delta_p$ for all 
$j$. Condition (\ref{eq:Ndiscs}) implies that the supports of the probability 
measures $\sigma_j=(f_j)_*\sigma$ converge to $K$; passing to
a subsequence we obtain a probability measure $\mu=\lim_{j\to\infty} \sigma_j$
on $K$. It follows that
\[
	dd^c T=\lim_{j\to\infty}  dd^c T_j =
	 \lim_{j\to\infty}  \sigma_j  -\delta_p= \mu-\delta_p.
\]
If $u\in\NPsh(\c^3)$ then  $dd^c u$ is null positive, and hence
$0\le T(dd^c u)=\int_K u\, d\mu - u(p)$. 

It remains to show that $\supp\, T\subset  \wh K_\Ngot$.  This is a special case of 
part (a) in the following proposition. Part (b) will be used in  
Theorem \ref{th:minimalcurrents} below. We consider $\r^n$ as the standard real
subspace of $\c^n$ and denote by $\pi\colon\c^n\to\r^n$ the projection
$\pi(x+\imath y)=x$.

%
%
%
%
\begin{proposition}\label{prop:support}
Let $T$ be a null positive current of bidimension $(1,1)$ on $\c^n$. 
\begin{itemize}
	\item[\rm (a)] If $T$ has compact support and satisfies $dd^c T\le 0$ on $\c^n\setminus K$
	for some compact set $K\subset\c^n$, then $\supp\, T \subset  \wh K_\Ngot$.
	\item[\rm (b)] Assume that $T$ has bounded mass and $\pi(\supp\, T)\subset \r^n$ 
	is a bounded subset of $\r^n$. 
	 If $dd^c T\le 0$ on $\c^n\setminus (K\times \imath\r^n)$ 
	for some compact set $K\subset\r^n$, then $\supp\, T \subset  \wh K_\Mgot \times \imath \r^n$.
	(Recall that $\wh K_\Mgot $ is the minimal hull of $K$.)
\end{itemize}	
\end{proposition}

\begin{proof}[Proof of (a)]  
Fix a point $q\in \c^n\setminus \wh K_\Ngot$.
Proposition \ref{prop:exhaustion} furnishes a nonnegative smooth  function $u\in \NPsh(\c^n)$ 
which is strongly null plurisubharmonic on a neighborhood $U\subset\c^n$ of $q$ and vanishes on a 
neighborhood of $\wh K_\Ngot$. Since the support of $u$ is contained in 
$\c^n\setminus K$ where $dd^c T\le 0$, we have $T(dd^c u) = (dd^c T)(u) \le 0$. 
(We are using that $T$ has compact support, so it may be applied to forms with arbitrary supports.)
As $T$ is null positive on $\c^n$, we also have $T(dd^c u)\ge 0$;
hence $T(dd^c u) = 0$. Since $u$ is strongly null plurisubharmonic on $U$, 
it follows that $\M(T|_U)=0$. This proves that $\supp\, T\subset  \wh K_\Ngot$.

\noindent {\em Proof of (b).}  
Write $\Tcal_U=\pi^{-1}(U) = U\times\imath \r^n$ for any $U\subset \r^n$. 
Choose a ball $B\subset \r^n$ such that $\pi(\supp\, T) \cup \wh K_\Mgot \subset B$.
Since $T$ has bounded mass, it can be applied to any $2$-form 
with bounded continuous coefficients on  the tube $\Tcal_B$.
In particular, for any function $u\in \Cscr^2(\r^n)$ the current $T$ can be applied
to the $2$-form $dd^c (u\circ\pi)$.
Fix a point $q\notin \wh K_\Mgot$. Proposition \ref{prop:Mexhaustion}
furnishes a smooth nonnegative function $u\in \MPsh(\r^n)$ that 
is minimal strongly  plurisubharmonic on a neighborhood $U\subset\r^n$ of $q$
and vanishes  on a neighborhood $V\subset \r^n$ of $\wh K_\Mgot$.  
The function $\tilde u=u\circ \pi$ on $\c^n$ is then null plurisubharmonic
(see Lemma \ref{lem:minimal}), it is strongly null plurisubharmonic on the tube $\Tcal_U$, 
and it vanishes on $\Tcal_V$. Since the support of $\tilde u$ is contained in 
$\c^n\setminus \Tcal_K$ where $dd^c T$ is negative, we have 
$T(dd^c \tilde u)= (dd^c T)(\tilde u) \le 0$.  As $T$ is null positive, 
we also have $T(dd^c \tilde u)\ge 0$; hence $T(dd^c \tilde u) = 0$. 
Since $\tilde u$ is strongly null plurisubharmonic on $\Tcal_U$, it follows that 
$T$ has no mass there. This proves that $\supp\, T\subset  \wh K_\Mgot \times\imath\r^n$.
\end{proof}
\vskip -3mm This completes the proof of Theorem \ref{th:nullcurrents}. \end{proof}

In the remainder of the section we obtain several characterizations of the minimal hull 
of a compact set in $\r^3$. Recall that $\pi:\c^3\to \r^3$ is the projection $\pi(x+\imath y)=x$.

%
%
%
%
\begin{theorem}[\bf Characterization of the minimal hull by currents] 
\label{th:minimalcurrents}
Let $K$ be a compact set in $\r^3$. A point $p\in \r^3$ belongs to the minimal hull $\wh K_\Mgot$  
(\ref{eq:minimalhull}) if and only if there exists a null positive current $T$ on $\c^3$ 
of finite mass such that $\pi(\supp\, T)\subset\r^3$ is a bounded set
and $dd^c T = \mu-\delta_p$, where $\mu$ is a probability measure on 
the tube $\Tcal_K = K\times \imath\r^3$. 
\end{theorem}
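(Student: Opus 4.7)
The plan is to parallel the proof of Theorem \ref{th:nullcurrents}, replacing holomorphic discs by null holomorphic lifts of conformal minimal discs in $\r^3$ (furnished by Corollary \ref{cor:minullhull}) and tracking the projection $\pi\colon\c^3\to\r^3$ throughout.

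For the easy direction, assume $T$ and $\mu$ as in the statement exist. Given $u\in\MPsh(\r^3)\cap\Cscr^2$, Lemma \ref{lem:minimal} shows that $\tilde u=u\circ\pi\in\NPsh(\c^3)$, so $dd^c\tilde u$ is a null positive $(1,1)$-form with uniformly bounded coefficients on a neighborhood of $\supp T$ (using that $\pi(\supp T)$ is bounded). The pairing $T(dd^c\tilde u)$ is therefore defined, and null positivity combined with $dd^c T=\mu-\delta_p$ yields
\[
0\le T(dd^c\tilde u)=(dd^c T)(\tilde u)=\int_{\Tcal_K}(u\circ\pi)\,d\mu-u(p)\le\max_K u-u(p),
\]
since $\mu$ is a probability measure on $\Tcal_K$ and $u\circ\pi$ depends only on the real part. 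Approximating an arbitrary $u\in\MPsh(\r^3)$ by smooth minimal plurisubharmonic functions (via Lemma \ref{lem:minimal} and the smoothing in Proposition \ref{komp_psh_disc}(ii) applied to the null lift) extends the inequality and proves $p\in\wh K_\Mgot$.

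For the nontrivial direction, fix $p\in\wh K_\Mgot$ and a bounded convex open set $\omega\Subset\r^3$ with $K\cup\{p\}\subset\omega$. Corollary \ref{cor:minullhull} furnishes discs $f_j\in\Mgot(\d,\omega,p)$ whose boundary values concentrate near $K$ in measure. Lift each $f_j$ to $F_j=f_j+\imath h_j\in\Ngot(\d,\c^3,p)$ by taking the componentwise harmonic conjugate $h_j\colon\cd\to\r^3$ with $h_j(0)=0$; then $F_j(0)=p$. Since $f_j(\cd)\subset\omega$, Parseval's identity on the circle gives $\|h_j\|_{L^2(\t)}=\|f_j-p\|_{L^2(\t)}\le\mathrm{diam}(\omega)$, so $\|F_j\|_{L^2(\t)}$ is uniformly bounded. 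Viewing $F_j$ as a conformal harmonic immersion into $\r^6$, Lemma \ref{lem:mass} bounds $\M(T_j)$ uniformly for the null positive Green currents $T_j=(F_j)_*G$, which satisfy $dd^c T_j=\sigma_j-\delta_p$ with $\sigma_j=(F_j)_*\sigma$ and $\pi(\supp T_j)\subset\overline\omega$. By Banach--Alaoglu on the space of continuous compactly supported $2$-forms, a subsequence gives $T_j\to T$ weakly with $T$ null positive; testing against forms $\alpha$ with $\pi(\supp\alpha)\cap\overline\omega=\emptyset$ (where every $T_j$ vanishes) yields $\pi(\supp T)\subset\overline\omega$. The probability measures $\sigma_j$ are tight because $\sigma_j(\{|z|>R\})\le R^{-2}\|F_j\|_{L^2(\t)}^2$, so a further subsequence produces a weak limit $\sigma_j\to\mu$, again a probability measure on $\c^3$. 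For any open $U\subset\r^3$ with $\overline U\cap K=\emptyset$, the concentration property of the $f_j$ forces $\sigma_j(\Tcal_U)\to 0$, and the portmanteau inequality on the open set $\Tcal_U$ gives $\mu(\Tcal_U)\le\liminf\sigma_j(\Tcal_U)=0$; covering $\r^3\setminus K$ by countably many such $U$'s yields $\supp\mu\subset\Tcal_K$. Continuity of $dd^c$ on currents then gives $dd^c T=\lim dd^c T_j=\mu-\delta_p$.

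The main obstacle, absent in the setting of Theorem \ref{th:nullcurrents}, is that the lifts $F_j$ are not uniformly bounded in $\c^3$: only their real parts lie in the bounded set $\omega$, while the imaginary parts $h_j$ are controlled only in $L^2(\t)$. Consequently the $T_j$ are supported in the unbounded tube $\Tcal_\omega$ and one cannot invoke weak compactness of currents with a common compact support. The remedy is that harmonic conjugation is an $L^2$-isometry on the circle (so Lemma \ref{lem:mass} still delivers uniform mass bounds), and that the conclusion of the theorem only demands boundedness of $\pi(\supp T)\subset\r^3$, a condition that passes cleanly to the weak* limit from the uniform containment $\pi(\supp T_j)\subset\overline\omega$.
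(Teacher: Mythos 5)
Your proof follows the paper's route essentially verbatim: lift the Poletsky sequence from Corollary \ref{cor:minullhull} to null holomorphic discs, obtain uniform mass bounds from the $L^2$-boundedness of the conjugate function operator combined with Lemma \ref{lem:mass}, pass to weak limits of the Green currents $T_j$ and the boundary measures $\sigma_j$, and use Lemma \ref{lem:minimal} together with Proposition \ref{prop:support}-(b) for the converse. One technical detail worth patching: the componentwise harmonic conjugate $h_j$ of a $\Cscr^1(\cd)$ conformal minimal disc $f_j$ need not extend $\Cscr^1$ to $b\d$, so $F_j=f_j+\imath h_j$ may fail to belong to $\Ngot(\d,\c^3,p)$ and Lemma \ref{lem:mass} (stated for $\Cscr^2(\cd)$ immersions) does not apply directly; the paper handles this by first replacing each $f_j$ by a slightly shrunk disc, making $f_j$, and hence $F_j$, real analytic on a neighborhood of $\cd$, while your Chebyshev/portmanteau verification that the limit $\mu$ is a probability measure supported on $\Tcal_K$ spells out a step the paper only sketches.
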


Any current $T$ satisfying Theorem \ref{th:minimalcurrents}
has support contained in $\wh K_\Mgot \times \imath \r^3$ according to
Proposition \ref{prop:support}-(b).
If $T$ and $\mu$ are as in Theorem \ref{th:minimalcurrents} then 
\begin{equation}\label{eq:submeanvalue3}
	u(p) \le \int_{\Tcal_K} (u\circ \pi) \, d\mu \le \max_K u \qquad \forall u\in \MPsh(\r^3).
\end{equation}
Indeed, if $u\in \MPsh(\r^3)$ then $\tilde u = u\circ \pi\in \NPsh(\c^3)$ 
by Lemma \ref{lem:minimal}, and $\tilde u$ is bounded
on $\Tcal_B=B\times \imath\r^3$ for every bounded set $B\subset \r^3$. Choosing 
$B$ to be a large ball, we have $\supp\, T \subset \Tcal_B$ and hence 
$0\le T(dd^c \tilde u) = \int \tilde u\, d\mu - u(p)$, thus proving (\ref{eq:submeanvalue3}).
(Here we use that $T$ has bounded mass, so it can be applied to any $2$-form 
with bounded continuous coefficients on  $\Tcal_B$. Note that $dd^c \tilde u$ is such
when $\tilde u = u\circ \pi$ and $u$ is a $\Cscr^2$ function on $\r^3$.)
The projection $\nu=\pi_* \mu$ is then a probability measure on $K$
satisfying $u(p)\le \int_{K} u \, d\nu$ for every  $u\in \MPsh(\r^3)$.
Hence Theorem \ref{th:minimalcurrents} implies the following corollary.

%
%
%
%
\begin{corollary} 
\label{cor:minimalJensen}
Let $K$ be a compact set in $\r^3$. A point $p\in \r^3$ belongs to the minimal hull $\wh K_\Mgot$ 
(\ref{eq:minimalhull}) if and only if there exists a probability measure $\nu$ on $K$ such that
\begin{equation}\label{eq:submeanvalue1} 
	u(p) \le \int_K \! u \, d\nu  \le \max_K u \qquad \forall  u\in \MPsh(\r^3).
\end{equation}
\end{corollary}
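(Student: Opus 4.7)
The proof is essentially an immediate consequence of Theorem \ref{th:minimalcurrents}, with most of the work already laid out in the paragraph preceding the corollary. I will organize it as follows.

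For the easy direction, assume a probability measure $\nu$ on $K$ satisfying \eqref{eq:submeanvalue1} exists. Then for every $u \in \MPsh(\r^3)$ we have $u(p) \le \int_K u\, d\nu \le \max_K u$, which means $p \in \wh K_\Mgot$ by Definition \ref{def:minimalhull}.

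For the nontrivial direction, fix $p \in \wh K_\Mgot$. By Theorem \ref{th:minimalcurrents}, there is a null positive $(1,1)$-current $T$ on $\c^3$ of finite mass, with $\pi(\supp\, T)$ bounded in $\r^3$, such that $dd^c T = \mu - \delta_p$ for some probability measure $\mu$ on $\Tcal_K = K\times \imath \r^3$. Define the pushforward $\nu := \pi_* \mu$; since $\supp\, \mu \subset \Tcal_K$, this is a probability measure on $K$.

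It then remains to verify \eqref{eq:submeanvalue1}. Given $u \in \MPsh(\r^3)$, set $\tilde u = u \circ \pi$, which belongs to $\NPsh(\c^3)$ by Lemma \ref{lem:minimal}. To rigorously apply $T$ to $dd^c \tilde u$, I would first assume $u \in \Cscr^2(\r^3)$: on any bounded ball $B\subset \r^3$ containing $\pi(\supp\, T)$, the form $dd^c \tilde u$ has bounded continuous coefficients on the tube $\Tcal_B$, hence can be paired with the finite mass current $T$. Since $\tilde u \in \NPsh(\c^3) \cap \Cscr^2$, the form $dd^c \tilde u$ is null positive, and the null positivity of $T$ yields $T(dd^c \tilde u) \ge 0$. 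This rewrites, using $dd^c T = \mu - \delta_p$, as
\[
    0 \le T(dd^c \tilde u) = (dd^c T)(\tilde u) = \int_{\Tcal_K}\!\! \tilde u\, d\mu - \tilde u(p) = \int_K u\, d\nu - u(p).
\]
The general upper semicontinuous case follows by approximating $u$ from above by a decreasing sequence of smooth minimal plurisubharmonic functions (via the minimal analogue of Proposition \ref{lem:smoothing}(ii), which holds by Lemma \ref{lem:minimal}) and passing to the limit by monotone convergence. Finally, the trivial bound $\int_K u\, d\nu \le \max_K u$ holds since $\nu$ is a probability measure supported on $K$, giving \eqref{eq:submeanvalue1}.

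The only subtlety is making sense of $T(dd^c \tilde u)$ when $T$ has unbounded support in the imaginary directions; this is handled precisely by the fact that $T$ has finite mass and $dd^c \tilde u$ has bounded coefficients on any tube $\Tcal_B$ over a bounded set in $\r^3$ (in particular, on any tube containing $\supp\, T$), as noted in the discussion following the statement of Theorem \ref{th:minimalcurrents}. No further obstacle arises.
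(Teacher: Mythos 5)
Your proposal is correct and follows essentially the same route as the paper: it derives the corollary from Theorem \ref{th:minimalcurrents} by pushing the measure $\mu$ on $\Tcal_K$ forward under $\pi$ to get $\nu = \pi_*\mu$ and pairing $T$ with $dd^c(u\circ\pi)$ exactly as in the discussion preceding the corollary, including the observation that the finite mass of $T$ and boundedness of $dd^c\tilde u$ on $\Tcal_B$ justify the pairing. You also spell out the approximation of a general upper semicontinuous $u\in\MPsh(\r^3)$ by smooth ones and the passage to the limit via monotone convergence, a step the paper only implicitly handles with its parenthetical restriction to $\Cscr^2$ functions; this is a welcome small clarification rather than a different approach.
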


A measure $\nu$ satisfying (\ref{eq:submeanvalue1}) is called a {\em minimal Jensen measure} for the point  $p\in \wh K_\Mgot$.

%
%
\begin{proof} [Proof of Theorem \ref{th:minimalcurrents}]
If $\mu$ is a probability measure on $\Tcal_K$ satisfying (\ref{eq:submeanvalue3}),
then the measure $\nu =\pi_* \mu$ on $K$ satisfies (\ref{eq:submeanvalue1})
and hence  $p\in \wh K_\Mgot$.

Let us now prove the converse. Fix a point $p\in \wh K_\Mgot$.
Corollary \ref{cor:minullhull} furnishes  a bounded sequence of conformal minimal 
discs $f_j \in \Mgot(\d,\r^3,p)$ satisfying (\ref{eq:Mdiscs}). 
We may assume that each $f_j$ is real analytic on a neighborhood of $\cd$.
Let $g_j$ be the harmonic conjugate of $f_j$ on $\cd$ with $g_j(p)=0$.
Then $F_j=f_j+\imath g_j \in \Ngot(\d,\c^3)$ is a null holomorphic disc.
Let $\Theta_j = (f_j)_*G$ and $T_j=(F_j)_* G$ be the associated Green currents
on $\r^3$ and $\c^3$, respectively. Then $T_j$ is null positive and $\pi_* T_j = \Theta_j$
for every $j$.  By Lemma \ref{lem:mass} we have
\[
	4 \M(T_j) = \int_\t |F_j|^2 d\sigma - |p|^2 
	= \int_\t |f_j|^2\,d\sigma + \int_\t |g_j|^2\,d\sigma - |p|^2.
\]
Since the conjugate function operator is bounded on the Hilbert space $L^2(\t)$
\cite[Theorem 3.1, p.\ 116]{Garnett} and the sequence $f_j$ is uniformly bounded, 
we see that $\M(T_j)\le C<\infty$ for some constant $C$ and for all $j\in \n$.
We may assume by passing to a  subsequence that $T_j$ converges  weakly to a null positive 
$(1,1)$-current $T$ with finite mass (but not necessarily with compact support 
since the harmonic conjugates $g_j$ of $f_j$ need not be uniformly bounded), 
and $\Theta_j$ converges to a $2$-dimensional current $\Theta$ on $\r^3$.
From $\pi_*  T_j=\Theta_j$ for all $j\in \n$ we also get that $\pi_* T=\Theta$. 

By (\ref{eq:ddc}) we have that $dd^c T_j =(F_j)_*\sigma -\delta_p$ for all 
$j\in \n$. Note that $\pi_* (F_j)_*\sigma = (f_j)_* \sigma$. 
Condition (\ref{eq:Mdiscs}) implies that the supports of the probability 
measures $(f_j)_*\sigma$ converge to $K$, and hence the supports
of the measures $(F_j)_*\sigma$ converges to the tube $\Tcal_K$.
By passing to a subsequence we obtain a probability measure 
$\mu=\lim_{j\to\infty} (F_j)_*\sigma$ on $\Tcal_K$. It follows that
\[
	dd^c T=\lim_{j\to\infty}  dd^c T_j =
	 \lim_{j\to\infty}  (F_j)_*\sigma  -\delta_p= \mu-\delta_p.
\]

The last claim in Theorem \ref{th:minimalcurrents} follows directly from Proposition \ref{prop:support}-(b).
\end{proof}

%
%
%
%
\begin{remark}\label{rem:HL}
The proof of Proposition \ref{prop:support} is similar to the proof of the following result  
due to Harvey and Lawson \cite[Theorem 4.11]{Harvey-Lawson2013}:
{\em Given a compact set $K \subset \r^n$ and a minimal $p$-dimensional current $T$
on $\r^n$ \cite[Definition 4.7]{Harvey-Lawson2013} with $\supp(\partial T)\subset K$, 
it follows that the support of $T$ is contained in the $p$-plurisubharmonic hull of $K$.}
It is not clear from their work whether every point in the $p$-plurisubharmonic hull of $K$
lies in the support of such current. The main new part of Theorem \ref{th:minimalcurrents} for
$n=3$ and $p=2$ is  that it completely explains the minimal hull by Green currents. 
It would be of interest to know whether the analogous
result holds in higher dimensional Euclidean spaces.
\qed\end{remark}

We wish to compare the minimal hull of a compact set $K\subset \r^n$ to the null hull of the tube
$\Tcal_K= K\times \imath \r^n \subset \c^n$. The latter set is unbounded, and the standard
definition of its polynomial hull (and, by analogy, of its null hull) is by exhaustion with compact sets. 
Let $B_r\subset \r^n$ denote the closed ball of radius $r$ centered at the origin. 
Then $\Tcal_K= \bigcup_{r>0}  \Tcal_{K,r}$ where $\Tcal_{K,r} =  K\times \imath \overline B_r$, and we set
\begin{equation}\label{eq:hull}
	\widehat {\Tcal_K} = \bigcup_{r>0}  \widehat{\Tcal_{K,r}},\qquad 
	\widehat {(\Tcal_K)}_\Ngot  = \bigcup_{r>0}  \widehat{ (\Tcal_{K,r})}_\Ngot.
\end{equation}
Clearly $\widehat {(\Tcal_K)}_\Ngot \subset  \widehat {\Tcal_K}$.
From (\ref{eq:nullminullhull}) we also get that that 
\[
	\widehat {(\Tcal_K)}_\Ngot \subset  \wh {K}_\Mgot \times \imath\r^n \subset \Co (K)\times \imath\r^n.
\]
We do not know whether the first inclusion could be strict. On the other hand, 
Theorem \ref{th:minimalcurrents} motivates the following definition of the current null hull
of the tube $\Tcal_K$.

%
%
%
%
\begin{definition}\label{def:currentnullhull}
Let $K$ be a compact set in $\r^3$ and $\Tcal_K=K\times\imath\r^3\subset\c^3$ be the tube
over $K$. The  {\em current null hull} of $\Tcal_K$, denoted $\wh{(\Tcal_K)}_{\Ngot^*}$, is 
the union of supports of all null positive $(1,1)$-currents $T$  on $\c^3$ with finite mass
such that $\pi(\supp\, T)\subset \r^3$ is a bounded set and $dd^c T\le 0$ on $\c^3\setminus \Tcal_K$.
\end{definition}

Theorem  \ref{th:nullcurrents} shows that $\wh{(\Tcal_K)}_\Ngot \subset \wh{(\Tcal_K)}_{\Ngot^*}$.
Now Theorem \ref{th:minimalcurrents} implies the following result which extends the classical 
relationship between conformal minimal discs and null holomorphic discs to the corresponding hulls.

%
%
%
%
\begin{corollary}\label{cor:currentnullhull}
If $K$ is a compact set in $\r^3$ and $\Tcal_K=K\times\imath\r^3\subset\c^3$, then 
\begin{equation}\label{eq:projection} 
	\wh{(\Tcal_K)}_{\Ngot^*} = \wh K_\Mgot \times \imath\r^3.
\end{equation}
Here $\wh{(\Tcal_K)}_{\Ngot^*}$ denotes the current null hull of $\Tcal_K$  (see Definition \ref{def:currentnullhull}).
\end{corollary}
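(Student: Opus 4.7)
The plan is to establish the two inclusions in~\eqref{eq:projection} separately. The inclusion $\wh{(\Tcal_K)}_{\Ngot^*}\subset \wh K_\Mgot\times\imath\r^3$ is immediate from Proposition~\ref{prop:support}(b): every current $T$ that appears in Definition~\ref{def:currentnullhull} satisfies the hypotheses of that proposition (null positivity, finite mass, bounded projection $\pi(\supp T)$, and $dd^c T\le 0$ on $\c^3\setminus\Tcal_K$), hence its support lies in $\wh K_\Mgot\times\imath\r^3$; taking the union over all such $T$ gives the inclusion.

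For the reverse inclusion, I fix a point $z_0=p+\imath q$ with $p\in\wh K_\Mgot$ and $q\in\r^3$ and aim to exhibit an admissible current $T$ with $z_0\in\supp T$. First I invoke Theorem~\ref{th:minimalcurrents} to obtain a null positive $(1,1)$-current $T_0$ on $\c^3$ of finite mass with $\pi(\supp T_0)$ bounded and $dd^cT_0=\mu-\delta_p$ for some probability measure $\mu$ on $\Tcal_K$. Then I translate by $\imath q$: let $\tau_{\imath q}:\c^3\to\c^3$, $z\mapsto z+\imath q$, and set $T:=(\tau_{\imath q})_*T_0$. Because $\tau_{\imath q}$ is a biholomorphic isometry that preserves the null quadric $\Agot$ and maps $\Tcal_K=K\times\imath\r^3$ onto itself, $T$ remains null positive, has the same finite mass as $T_0$, and $\pi(\supp T)=\pi(\supp T_0)$ is still bounded. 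Moreover
\[
   dd^cT=(\tau_{\imath q})_*(\mu-\delta_p)=\nu-\delta_{z_0},
\]
where $\nu=(\tau_{\imath q})_*\mu$ is again a probability measure supported on $\Tcal_K$. On $\c^3\setminus\Tcal_K$ the measure $\nu$ contributes nothing while $-\delta_{z_0}$ is nonpositive, so $dd^cT\le 0$ there, confirming that $T$ is admissible for Definition~\ref{def:currentnullhull}.

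It remains to verify that $z_0\in\supp T$. Since $\supp T=\supp T_0+\imath q$, this reduces to $p\in\supp T_0$. Using the standard containment $\supp(dd^cT_0)\subset\supp T_0$ together with the decomposition $dd^cT_0=\mu-\delta_p$, the atom at $p$ has net mass $\mu(\{p\})-1$, which is nonzero whenever $\mu(\{p\})<1$; in that generic situation $p\in\supp(dd^cT_0)\subset\supp T_0$, and the argument is complete.

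The main obstacle I anticipate is the degenerate possibility $\mu=\delta_p$, which forces $p\in K$ and occurs when every admissible approximating sequence of conformal minimal discs through $p$ of Corollary~\ref{cor:minullhull} collapses to $p$ in the limit. To deal with this, my plan is first to observe that whenever $K$ contains points other than $p$, one can choose the sequence of discs $f_j\in\Mgot(\d,\omega,p)$ in Corollary~\ref{cor:minullhull} so that the boundary measures $(f_j)_*\sigma$ do not concentrate at $p$ (for example by prescribing that a positive proportion of $f_j(\t)$ stays near a chosen point of $K$ different from $p$), which ensures $\mu(\{p\})<1$ and makes the argument above go through. In the remaining case $K=\{p\}$ a single point, one has $\wh K_\Mgot=\{p\}$ so that $\wh K_\Mgot\times\imath\r^3=\Tcal_K$, and I would verify $\Tcal_K\subset\wh{(\Tcal_K)}_{\Ngot^*}$ directly by constructing, for each $z_0=p+\imath q\in\Tcal_K$, an admissible current supported on $\Tcal_K$ using the null plurisubharmonic functions furnished by Lemma~\ref{lem:extend} (applied near $\Tcal_K$) together with a suitable cutoff to enforce finite mass and bounded $\pi$-projection.
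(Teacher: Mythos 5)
Your overall strategy is the right one and matches the paper's intent: Proposition~\ref{prop:support}(b) gives the forward inclusion, and Theorem~\ref{th:minimalcurrents} together with the $\imath\r^3$-translation gives the reverse inclusion. The observation that $\mu(\{p\})<1$ forces $p\in\supp(dd^cT_0)\subset\supp T_0$ is correct and is exactly the right way to see that the limiting current actually passes through $p$; the translation argument preserving null positivity, mass, and $\Tcal_K$ is also fine.

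However, the handling of the degenerate case $\mu=\delta_p$ (which forces $p\in K$) is where the proposal goes astray. Your first fix --- re-engineering the discs of Corollary~\ref{cor:minullhull} so that the boundary measures do not concentrate at $p$ --- would require reworking the disc-formula machinery behind that corollary to build in control on where the boundary mass goes; as stated, Corollary~\ref{cor:minullhull} gives only existence, and one cannot simply ``prescribe that a positive proportion of $f_j(\t)$ stays near a chosen point of $K$.'' Your second fix, for $K=\{p\}$, does not work at all: Lemma~\ref{lem:extend} applies to smooth \emph{complex curves} whose tangent lines avoid the null quadric, whereas $\Tcal_K=\{p\}\times\imath\r^3$ is a totally real $3$-plane, not a complex curve; and even where it does apply, the lemma produces a null plurisubharmonic \emph{function}, not a null positive \emph{current}, so it cannot directly yield an admissible $T$.

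There is a much simpler uniform resolution covering every $p\in K$ (not just $K=\{p\}$), which avoids Theorem~\ref{th:minimalcurrents} entirely in that case. Fix $q\in\r^3$, pick any nonzero $v\in\r^3$ and any null vector $\nu_0\in\Agot^*$, and let $T$ be the current obtained by integrating $\alpha\mapsto\langle\alpha,\nu_0\wedge J\nu_0\rangle$ against arc length on the segment $I=\{p+\imath(q+sv):|s|\le 1\}\subset\Tcal_K$. This $T$ is null positive by construction, has finite mass, and $\pi(\supp T)=\{p\}$ is bounded; moreover $\supp T\subset\Tcal_K$, and since $dd^c$ is a local operator $\supp(dd^cT)\subset\supp T\subset\Tcal_K$, so $dd^cT=0\le 0$ on $\c^3\setminus\Tcal_K$. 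Thus $T$ is admissible and $p+\imath q\in\supp T$, which shows directly that $\Tcal_K\subset\wh{(\Tcal_K)}_{\Ngot^*}$. With this in hand, Theorem~\ref{th:minimalcurrents} plus your translation argument is needed only for $p\in\wh K_\Mgot\setminus K$, where $\supp\mu\subset\Tcal_K$ automatically forces $\mu\ne\delta_p$ and hence $p\in\supp T_0$, completing the proof cleanly.
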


\medskip
\begin{question}
Let $T$ be a current as in Theorem \ref{th:minimalcurrents} with $dd^c T=\mu-\delta_p$,
where $\mu$ is a probability measure on $\Tcal_K$ and $p\in\r^3$.
Is the point $p$ always contained in the support of the projected current $\Theta=\pi_* T$
on $\r^3$?
\end{question}

If so, we could conclude that $\wh K_\Mgot$ is the union
of supports of currents of the form $\Theta=\lim_{j\to\infty} (f_j)_*G$, where
$f_j$ is a bounded sequence of conformal minimal discs as in Corollary \ref{cor:minullhull}
whose boundaries converge to $K$ in the measure theoretic sense.
However, the problem  is that cancellation of mass may occur in $\Theta$; 
see Remark \ref{rem:loss}. This can be circumvented by considering
$(f_j)_*G$ as bounded  linear functionals  of the space of quadratic forms
on $\r^3$. The clue is given by Lemma \ref{lem:Hessian}; we now explain this.

Denote by $\Qcal(\r^n)$ the separable Banach space consisting of all quadratic forms 
$
	h=\sum_{i,j=1}^n h_{i,j}(x) dx_i\otimes dx_j
$
on $\r^n$ with continuous coefficients $h_{i,j}$  and with 
finite sup-norm 
\[
	||h||=\sum_{i,j=1}^n \sup_{x\in\r^n} |h_{i,j}(x)| <\infty.
\]
Assume that $T$ is 2-dimensional current on $\r^n$ of finite mass
(hence representable by integration), and let $\vec T$ and $||T||$ be its
frame field and mass measure (\ref{eq:Talpha}), respectively. Then $T$ defines a
bounded linear functional on $\Qcal(\r^n)$ by the formula
\[
	T(h) = \int_{\r^n} \tr_{\vec{T}} h \cdotp d\, ||T||,\quad h\in \Qcal(\r^n),
\]
where $\tr_{\vec{T}} h$ is the trace of the restriction of $h$ to the $2$-plane $\span\, \vec T$.
Since $\tr_{\vec{T}} h$  is  independent of the orientation determined by $\vec T$,
every compact surface $M\subset \r^n$ (also nonorientable)
defines a bounded linear functional on $\Qcal(\r^n)$.
More generally, one can use rectifiable surfaces with finite total area, that is,  countable unions of images 
of Lipshitz maps $f:\cd\to \r^n$. (Such surfaces define rectifiable currents, see \cite{Federer}.) 

Given a $\Cscr^1$ immersion $f:\cd\to \r^n$, we denote by $\vec T$ 
the $2$-frame along $f$ determined by the partial derivatives $f_x,f_y$. Define a  bounded linear functional
$T=f_*G$ on $\Qcal(\r^n)$ by
\begin{equation}\label{eq:Th}
	T(h) =  - \frac{1}{2\pi}  \int_\d  \log|\zeta| \left( \tr_{\vec{T}} h \circ f \right)\cdotp dx\wedge dy,
	\quad h\in \Qcal(\r^n).
\end{equation}
If $f$ is conformal harmonic, we see from (\ref{eq:Hess}) 
that for every $u\in \Cscr^2(\r^n)$ we have
\begin{equation} \label{eq:Hess2}
	T(\Hess u) = - \frac{1}{2\pi}  \int_\d \log|\zeta|\cdotp dd^c(u\circ f) 
	=\int_0^{2\pi} \! \! u\bigl(f(e^{\imath t})\bigr) \frac{dt}{2\pi} - u(f(0)).	
\end{equation}
(There is no cancellation of mass as explained above, so we have the equality in  (\ref{eq:Hess}).)
This gives the following characterization of the minimal hull in $\r^3$.

%
%
%
%
\begin{corollary} \label{cor:hessian}
Let $K$ be a compact set in $\r^3$. A point $p\in \r^3$ belongs to the minimal hull $\wh K_\Mgot$ 
of $K$ (\ref{eq:minimalhull}) if and only if there exist a continuous linear functional $T$ with compact 
support on $\Qcal(\r^3)$ and a probability measure $\mu$ on $K$ such that 
\begin{equation}\label{eq:THess}
	T(\Hess u) = \int_K u\, d\mu - u(p) \qquad \forall u\in \Cscr^2(\r^3),
\end{equation}
and $T(\Hess u)\ge 0$ for every minimal plurisubharmonic function $u$ of class $\Cscr^2$ on $\r^3$.
The support of every such functional $T$ is contained in $\wh K_\Mgot$.
\end{corollary}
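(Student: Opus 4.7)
The plan is to prove the three asserted parts in turn, mirroring how Theorem \ref{th:minimalcurrents} was deduced from Corollary \ref{cor:minullhull}. For the sufficiency direction, assume such $(T,\mu)$ exists. For any $u\in \Cscr^2(\r^3)\cap \MPsh(\r^3)$, applying $T$ to $\Hess u$ gives $u(p)\le \int_K u\, d\mu\le \max_K u$. I then extend this to arbitrary $u\in\MPsh(\r^3)$ by invoking the smoothing that descends from Proposition \ref{lem:smoothing}(ii) via Lemma \ref{lem:minimal}: convolution with a radial approximate identity produces a family $u_\epsilon\in\Cscr^\infty\cap\MPsh$ with $u_\epsilon\downarrow u$ pointwise on a bounded open set containing $K\cup\{p\}$. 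A short upper-semicontinuity argument on compact $K$ yields $\max_K u_\epsilon\to\max_K u$, and combined with $u_\epsilon(p)\downarrow u(p)$ this gives $u(p)\le\max_K u$, so $p\in\wh K_\Mgot$.

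For the necessity direction, I fix $p\in\wh K_\Mgot$ and a bounded open convex set $\omega\Subset\r^3$ containing $K\cup\{p\}$. Corollary \ref{cor:minullhull} supplies $f_j\in\Mgot(\d,\omega,p)$ whose boundary probability measures $\sigma_j=(f_j|_\t)_*\sigma$ converge along a subsequence to a probability measure $\mu$ supported in $K$. Define the functionals $T_j=(f_j)_*G$ on $\Qcal(\r^3)$ by \eqref{eq:Th}. Lemma \ref{lem:mass} bounds $\M(T_j)$ uniformly, since $\|f_j\|_{L^2(\t)}$ is controlled by the diameter of $\omega$, so the operator norms of $T_j$ on the separable space $\Qcal(\overline\omega)$ are uniformly bounded; Banach-Alaoglu extracts a weak-$*$ limit $T$ with compact support in $\overline\omega$. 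Applying \eqref{eq:Hess2} for each $j$ and passing to the limit produces \eqref{eq:THess}. Positivity of $T$ against $\Cscr^2$ minimal plurisubharmonic functions passes through the limit because each $T_j$ satisfies it: the defining property of a $\Cscr^2$ minimal plurisubharmonic $u$ is precisely $\tr_{\vec T_j}(\Hess u)\ge 0$ pointwise, whence \eqref{eq:Th} gives $T_j(\Hess u)\ge 0$.

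The support statement is the most delicate part. Given $q\notin\wh K_\Mgot$, Proposition \ref{prop:Mexhaustion} applied to $\wh K_\Mgot$ provides a nonnegative smooth $u\in\MPsh(\r^3)$ that vanishes on a neighborhood of $\wh K_\Mgot$ and is strongly minimal plurisubharmonic on some open neighborhood $V$ of $q$. Since $u\equiv 0$ on $K$ and $u(p)=0$, \eqref{eq:THess} gives $T(\Hess u)=0$. Strong minimal plurisubharmonicity yields $\tr_{\vec T_j}(\Hess u)\ge c>0$ on a smaller closed neighborhood $V_0\Subset V$ of $q$, whence \eqref{eq:massmeasure} and \eqref{eq:Th} give $T_j(\Hess u)\ge c\,\|T_j\|(V_0)$. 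Extracting a further subsequence along which the positive measures $\|T_j\|$ converge weakly on $\overline\omega$, I conclude $\liminf_j\|T_j\|(V_0)=0$. The mass representation estimate $|T_j(h)|\le C\|h\|\,\|T_j\|(V_0)$ for quadratic forms $h$ supported in $V_0$ then forces $T(h)=0$ for all such $h$, i.e.\ $q\notin\supp T$.

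The principal difficulty, as I see it, is this last step: converting the vanishing of a $\liminf$ of mass measures near $q$ into the vanishing of the weak-$*$ limit $T$ on all test quadratic forms supported near $q$. This hinges on the representation of each $T_j$ in terms of its frame $\vec T_j$ and mass measure $\|T_j\|$, together with the uniform mass bound of Lemma \ref{lem:mass}; the remaining steps are direct applications of results already established in the paper.
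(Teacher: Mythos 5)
Your argument is correct and follows the paper's own line of reasoning closely: sufficiency by testing $T$ against Hessians of smooth minimal plurisubharmonic functions, necessity by taking weak-$*$ limits of the functionals $T_j=(f_j)_*G$ furnished by Corollary \ref{cor:minullhull} and of the boundary measures $(f_j)_*\sigma$, with Lemma \ref{lem:mass} and \eqref{eq:Hess2} doing the heavy lifting. Where you add value is in making explicit two steps the paper leaves implicit. First, the reduction of the condition $p\in\wh K_\Mgot$ to testing against $\Cscr^2$ functions: you spell out the smoothing argument (via Proposition \ref{lem:smoothing}(ii) and Lemma \ref{lem:minimal}) needed to pass from smooth minorants to arbitrary minimal plurisubharmonic functions, whereas the paper simply writes ``hence $p\in\wh K_\Mgot$.'' Second, the support statement: the paper cites Proposition \ref{prop:support}(b), which is stated for null positive $(1,1)$-currents on $\c^n$, not for functionals on $\Qcal(\r^3)$; you instead re-run that proposition's mechanism directly in the $\Qcal(\r^3)$ setting, using the mass-measure representation \eqref{eq:massmeasure} and a strongly minimal plurisubharmonic barrier from Proposition \ref{prop:Mexhaustion}. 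That is the right adaptation, and it is more self-contained than a bare citation across two different categories of objects.

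One genuine caveat worth noting about your support argument (which is also latent in the paper): as written, it proves the support containment only for the \emph{specific} $T$ obtained as a weak-$*$ limit of the $T_j$, because the estimate $T_j(\Hess u)\ge c\,\|T_j\|(V_0)$ refers to the sequence. The corollary's final sentence asserts containment for \emph{every} functional satisfying the hypotheses. For a general such $T$ there are no $T_j$ to work with. The clean fix is a perturbation argument that needs neither the $T_j$ nor any extra representability of $T$: take $u_0$ from Proposition \ref{prop:Mexhaustion} (vanishing near $\wh K_\Mgot$, strongly minimal plurisubharmonic near $q$), so \eqref{eq:THess} gives $T(\Hess u_0)=0$; for any $w\in\Cscr^2$ compactly supported in the strong-positivity neighborhood of $q$ and with $\|\Hess w\|_\infty$ small enough, both $u_0\pm w$ are minimal plurisubharmonic, hence $T(\Hess(u_0\pm w))\ge 0$, which forces $T(\Hess w)=0$; by homogeneity this holds for all such $w$. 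This shows $T$ annihilates all Hessian-type test forms supported near $q$, which is what the notion of support in this setting should mean (and also covers the constructed $T$ without the subsequence gymnastics). Otherwise your proof is sound.
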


Note that the measure $\mu$ in (\ref{eq:THess}) is a minimal Jensen measure for $p$
(see Corollary \ref{cor:minimalJensen}).

\begin{proof}
If $T$ and $\mu$ as in the corollary exist, 
then for every $u\in \Cscr^2(\r^3)\cap \MPsh(\r^3)$ we have
$0\le T(\Hess u) = \int_K u d\mu - u(p)$, and hence $p\in \wh K_\Mgot$.

Assume now that $p\in \wh K_\Mgot$. 
Let $f_j\colon \cd\to\r^3$ be a bounded sequence of conformal harmonic immersions
as in Corollary \ref{cor:minullhull}, with $f_j(0)=p$ for all $j$. The  associated
linear functionals $T_j$ on $\Qcal(\r^3)$, given by (\ref{eq:Th}),
are a bounded sequence in the dual space $\Qcal(\r^3)^*$. 
By the same argument as in the proof of Theorem \ref{th:nullcurrents}
we see that a subsequence of $T_j$ converges in the weak* topology
to a bounded linear functional $T\in \Qcal(\r^3)^*$. 
Similarly, we may assume that  the probability measures 
$(f_j)_*\sigma$ converge weakly to a measure $\mu$ on $K$. Since every $T_j$ satisfies
(\ref{eq:Hess2}), we get in the limit the identity (\ref{eq:THess}). If $u\in\Cscr^2$ is minimal 
plurisubharmonic then $T_j(\Hess u)\ge 0$ by (\ref{eq:Hess2}) and the submeanvalue
property of the subharmonic function $u\circ f_j$ on $\cd$. Passing to the limit we obtain
$T(\Hess u)\ge 0$. Proposition \ref{prop:support}-(b) shows that $\supp\, T\subset \wh K_\Mgot$.
\end{proof}


\section{Bochner's tube theorem for polynomial hulls}
\label{sec:Bochner}

Bochner's tube theorem \cite{Bochner} says that for every connected 
open set $\omega\subset\r^n$ the envelope of holomorphy of the tube $\Tcal_\omega=\omega\times\imath\r^n$ 
equals its convex hull $\Co(\Tcal_\omega) = \Co(\omega)\times \imath\r^n$.
This beautiful classical result can  be found in most standard texts 
on complex analysis (see e.g.\ \cite{Bochner-Martin,Hormander-SCV,Stout-convexity}),
and it was extended in several directions by different authors.
A new recent proof was given by J.\ Hounie \cite{Hounie} where the 
reader can find updated references.

In light of Bochner's theorem the following is a natural question:

\begin{question}
Let $K$ be a connected compact set in $\r^n$.
Is the polynomially convex hull  $\wh \Tcal_K$ (\ref{eq:hull}) of the tube
$\Tcal_K=K \times\imath\r^n\subset\c^n$ always equal its convex hull:
\[
 	\wh \Tcal_K \stackrel{?}{=} \Co(\Tcal_K) = \Co(K) \times\imath\r^n.
\]	
(The inclusion $\wh \Tcal_K\subset \Co(\Tcal_K)$ is obvious.)
\end{question}

We give an affirmative answer if  the polynomial hull 
of $\Tcal_K$ is defined as the union of supports
of positive currents $T$ of bidimension $(1,1)$ with finite mass
such that $dd^c T$ is negative on $\c^n\setminus \Tcal_K$ and
$\supp \,T$ projects to a bounded subset of $\r^n$. 
This definition is entirely natural in view of the 
Duval-Sibony-Wold \cite{Duval-Sibony1995,Wold2011}  characterization 
of the polynomial hull of a compact set $L\subset\c^n$
by positive $(1,1)$ currents $T$ with compact support 
such that $dd^c T\le 0$ on $\c^n\setminus L$
(see Theorem \ref{th:DS} in Sect.\ \ref{sec:intro}).

Let $\pi\colon \c^n\to\r^n$ denote the projection $\pi(x+\imath y)=x$.
We have the following result. 

%
%
%
%
\begin{theorem} [\bf Bochner's tube theorem for polynomial hulls] \label{th:Bochner}
Let $K$ be a connected compact set in $\r^n$. For every point 
$z_0=p+\imath q\in \c^n$ with $p\in\Co(K)$ 
there exists a positive current $T$ of bidimension $(1,1)$  on $\c^n$ with finite mass
satisfying $\supp(T)\subset \Co(K)\times\imath\r^n$ and
$dd^c T=\mu-\delta_{z_0}$, where  $\mu$ is a probability measure on $\Tcal_K$
and $\delta_{z_0}$ is the Dirac mass at $z_0$.

Conversely, let $T$ be a positive  $(1,1)$  current on $\c^n$
with finite mass such that $\pi(\supp\, T)$ is a bounded set in $\r^n$.
If $dd^c T\le 0$ on $\c^n\setminus \Tcal_K$, then 
$\supp\, T \subset  \Co(K) \times \imath \r^n$.
\end{theorem}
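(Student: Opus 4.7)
The plan is to prove the two directions separately, recycling techniques already developed in the paper. The converse direction is closely analogous to Proposition \ref{prop:support}(b), with smooth convex functions on $\r^n$ lifted to plurisubharmonic functions on $\c^n$ replacing the minimal plurisubharmonic functions used there. The existence direction imitates the null-hull argument in the proof of Theorem \ref{th:nullcurrents}: I build a Poletsky-type sequence of holomorphic discs by hand, using Carath\'eodory's theorem and the Poisson integral, and then extract a limiting Green current by weak-$*$ compactness.

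For the converse, fix $z_0=p+iq$ with $p\notin\Co(K)$. One can construct a smooth convex function $u\colon\r^n\to[0,\infty)$ vanishing on an open convex neighborhood $W$ of $\Co(K)$ with $p\notin\overline W$ and strongly convex on an open neighborhood $V$ of $p$; concretely, choose a Euclidean ball $B(c,R)\supset\Co(K)$ with $p\notin\overline{B(c,R)}$ and set $u(x)=\phi(|x-c|^2-R^2)$ with $\phi\in\Cscr^\infty(\r)$ convex, nondecreasing, vanishing on $(-\infty,0]$ and strongly convex on $(0,\infty)$. The lift $\tilde u=u\circ\pi$ satisfies $\Lcal_{\tilde u}(z;a+ib)=\frac14(\Hess u(a,a)+\Hess u(b,b))$, so it is plurisubharmonic on $\c^n$, vanishes on $\Tcal_W\supset\Tcal_K$, and is strongly plurisubharmonic on $\Tcal_V$. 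Boundedness of $\pi(\supp T)$ ensures that $\tilde u$ has bounded continuous derivatives on $\supp T$, so $T(dd^c\tilde u)$ is well-defined and nonnegative by positivity of $T$. Choosing a smooth compactly supported cutoff $\chi$ equal to one on $\supp T$, we also have $T(dd^c\tilde u)=T(dd^c(\chi\tilde u))=(dd^c T)(\chi\tilde u)$; the function $\chi\tilde u\ge 0$ has compact support in $\c^n\setminus\Tcal_K$, where $dd^c T\le 0$, so this pairing is $\le 0$. Hence $T(dd^c\tilde u)=0$, and strong positivity of $dd^c\tilde u$ on $\Tcal_V$ forces $\|T\|(\Tcal_V)=0$, i.e., $z_0\notin\supp T$.

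For the existence direction, write $p=\sum_{j=1}^m\lambda_jp_j$ with $p_j\in K$, $\lambda_j>0$, $\sum\lambda_j=1$ (Carath\'eodory, $m\le n+1$). Partition $\t$ into disjoint arcs $E_j$ with $|E_j|=2\pi\lambda_j$ and let $\phi\colon\t\to\r^n$ be the step function $\phi=p_j$ on $E_j$, so $\int_\t\phi\,d\sigma=p$. Pick smooth $\phi_k\colon\t\to\r^n$ with $\phi_k\to\phi$ in $L^2(\t,d\sigma)$ and $\|\phi_k\|_\infty$ uniformly bounded. Let $u_k\colon\d\to\r^n$ be the Poisson extension of $\phi_k$, let $\tilde u_k\colon\d\to\r^n$ be its harmonic conjugate normalized by $\tilde u_k(0)=0$, and set $f_k=u_k+i\tilde u_k+(z_0-u_k(0))$. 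Then $f_k\colon\cd\to\c^n$ is smooth on $\cd$ and holomorphic on $\d$, with $f_k(0)=z_0$. By $L^2$-boundedness of the Hilbert transform (M.\ Riesz), $\|\tilde u_k\|_{L^2(\t)}\le C\|u_k\|_{L^2(\t)}$, so $\|f_k\|_{L^2(\t)}$ is uniformly bounded, and the equality case of Lemma \ref{lem:mass} yields a uniform mass bound for the Green currents $T_k=(f_k)_*G$. A weak-$*$ compactness argument (in the space of finite Radon measures, as in the proof of Theorem \ref{th:nullcurrents}) extracts a subsequence with $T_k\to T$, a positive $(1,1)$-current of finite mass. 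The boundary probability measures $(f_k)_*\sigma$ are tight (Chebyshev applied to the $L^2$ bound), and $\Re f_k|_\t\to\phi$ in $L^2$ with $\phi$ valued in $K$; hence they converge weakly to a probability measure $\mu$ with $\supp\mu\subset\Tcal_K$. Passing to the limit in the identity $dd^cT_k=(f_k)_*\sigma-\delta_{z_0}$ of (\ref{eq:ddc}) produces $dd^cT=\mu-\delta_{z_0}$. Finally, $\Re f_k$ is uniformly bounded on $\cd$ by the maximum principle applied to each harmonic component of $u_k$, so $\pi(\supp T)$ is bounded, and the inclusion $\supp T\subset\Co(K)\times i\r^n$ is then the converse direction applied to $T$.

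The main technical point will be ensuring that the limiting measure $\mu=\lim(f_k)_*\sigma$ retains total mass $1$: the boundary values $f_k|_\t$ are unbounded in $L^\infty$ (the imaginary parts $\tilde u_k$ blow up near the jump points of $\phi$), so a priori mass could escape to infinity in the imaginary direction. The $L^2$ estimate from the Hilbert transform is exactly what prevents this, via Chebyshev's inequality. The currents $T_k$ themselves do not have uniformly bounded support in $\c^n$ either, but their uniformly bounded mass (via Lemma \ref{lem:mass}) permits a standard diagonal Banach-Alaoglu extraction on exhausting balls, just as in the proof of Theorem \ref{th:nullcurrents}.
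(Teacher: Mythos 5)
Your proof is essentially sound, but the existence direction follows a genuinely different route from the paper's, so let me compare. The paper uses the simplest case of Gromov's convex integration lemma: for $p\in\Co(K)$ and $\omega_j=\{x:\dist(x,K)<1/j\}$ (connected because $K$ is), one produces a smooth loop $g_j\colon\t\to\omega_j$ whose mean is $p$, then takes its harmonic extension and conjugate to get a holomorphic disc $f_j$ whose boundary lies entirely in $\omega_j\times\imath\r^n$; the limiting measure is then automatically supported on $\Tcal_K$. You instead invoke Carath\'eodory to write $p$ as a convex combination of points of $K$, build a step function $\phi$ valued in $K$ with mean $p$, smooth it, and control the approximation only in $L^2$. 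The boundaries of your discs are therefore not contained in any fixed neighborhood of $\Tcal_K$ --- they wander during the transitions between the values $p_j$ --- and you recover $\supp\mu\subset\Tcal_K$ by a Chebyshev argument on the shrinking exceptional set where $\Re f_k|_\t$ is far from $K$. Both approaches work; yours avoids the convex integration lemma and, notably, does not actually use the hypothesis that $K$ is connected (the paper needs connectivity so that $\omega_j$ is connected). From the point where the $L^2(\t)$ bound on $f_k$ is established, the two proofs coincide: the mass bound from Lemma \ref{lem:mass} combined with the $L^2$-boundedness of the conjugate operator, weak-$*$ compactness, and passage to the limit in $dd^cT_k=(f_k)_*\sigma-\delta_{z_0}$.

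In the converse direction you are following Proposition \ref{prop:support}-(b), as the paper does, but there is a flaw in the write-up: you say ``choosing a smooth compactly supported cutoff $\chi$ equal to one on $\supp T$'', which is impossible --- the hypothesis only makes $\pi(\supp T)$ bounded, so $\supp T$ is typically unbounded in the $\imath\r^n$ direction and no compactly supported $\chi$ can be $\equiv 1$ there. This is fixable: take cutoffs $\chi_R$ with $\chi_R\equiv 1$ on $|z|\le R$, supported in $|z|\le 2R$, with $|d\chi_R|=O(1/R)$ and $|dd^c\chi_R|=O(1/R^2)$. Since $\tilde u=u\circ\pi$ and its first derivatives are bounded on $\supp T$ (they depend only on the bounded real coordinate), the error terms in $dd^c(\chi_R\tilde u)-\chi_R\,dd^c\tilde u$ have coefficients $O(1/R)$ on $\supp T$, so pairing with the finite-mass current $T$ they vanish as $R\to\infty$; meanwhile $T(\chi_R\,dd^c\tilde u)\to T(dd^c\tilde u)$ by dominated convergence. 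Hence $T(dd^c\tilde u)=\lim_R(dd^cT)(\chi_R\tilde u)\le 0$, which is the inequality you wanted. The rest of your converse argument (the explicit smooth convex function $u=\phi(|x-c|^2-R^2)$, its lift to a plurisubharmonic $\tilde u$ strongly plurisubharmonic near $\Tcal_V$, and the conclusion $\|T\|(\Tcal_V)=0$) is correct.
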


The analogous result for null hulls is given by Corollary \ref{cor:currentnullhull}.

\begin{proof}[Proof of Theorem \ref{th:Bochner}]
By translation invariance in the $\imath\r^n$ direction it suffices to prove the result
for points $z_0=p\in \r^n$. We shall use the following simplest case of the 
{\em convex integration lemma} due to Gromov (cf.\ \cite{Gromov:convex} or \cite{Gromov:book}).

\begin{lemma}\label{lem:convex}  
Let $\omega$ be a connected open set in $\r^n$ and $p\in\r^n$ a point in the
convex hull $\Co(\omega)$. Then there exists a smooth loop
$g \colon \t=b\d\to \omega$ such that $\int_0^{2\pi} g(e^{\imath t}) \frac{dt}{2\pi}=p$.
\end{lemma}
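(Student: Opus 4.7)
The plan is to use Carathéodory's theorem to realize $p$ as a convex combination of finitely many points of $\omega$, construct a smooth loop whose boundary average approximates $p$ by ``parking'' at these points in the appropriate proportions, and correct the remaining error by a small translation.

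By Carathéodory, write $p = \sum_{j=1}^{m} \lambda_j x_j$ with $m \le n+1$, $x_j \in \omega$, $\lambda_j > 0$, $\sum_j \lambda_j = 1$. Since $\omega$ is open and connected in $\r^n$, it is smoothly path-connected, so I can choose smooth paths $\gamma_j \colon [0,1] \to \omega$ from $x_j$ to $x_{j+1}$ (indices cyclic mod $m$). Let $K \subset \omega$ be the compact set consisting of the $x_j$ together with the images of the $\gamma_j$, and set $\delta := \dist(K, \r^n\setminus \omega) > 0$.

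For small $\epsilon > 0$ I would build a smooth loop $g_\epsilon \colon \t \to \omega$ whose image lies in the $\delta/2$-neighborhood of $K$, organized as follows: partition $[0,2\pi]$ into $m$ ``parking'' arcs $I_j$ of length $2\pi\lambda_j - \epsilon$ on which $g_\epsilon$ equals the constant $x_j$, alternating with $m$ ``transition'' arcs of length $\epsilon$ on which $g_\epsilon$ traces a reparametrization of $\gamma_j$; corners between adjacent pieces are smoothed by a fixed bump-function cutoff supported in a sub-interval of length comparable to $\epsilon$. A direct estimate then gives
\[
\int_0^{2\pi} g_\epsilon(e^{\imath t})\, \frac{dt}{2\pi} = \sum_{j=1}^m \lambda_j x_j + O(\epsilon) = p + O(\epsilon),
\]
where the $O(\epsilon)$ constant depends only on $m$ and the diameter of $K$.

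Finally, set $v_\epsilon := p - \int_0^{2\pi} g_\epsilon(e^{\imath t})\, \frac{dt}{2\pi}$, so $|v_\epsilon| = O(\epsilon)$. Choosing $\epsilon$ small enough that $|v_\epsilon| < \delta/2$, the translated loop $g(e^{\imath t}) := g_\epsilon(e^{\imath t}) + v_\epsilon$ still maps $\t$ into $\omega$ (its image lies in a $\delta$-neighborhood of $K$, hence in $\omega$), is smooth, and by linearity of the integral has average exactly $p$. The only real technical point is keeping the approximation error $|v_\epsilon|$ strictly smaller than the slack $\delta/2$ that we reserved when constructing $g_\epsilon$; this is precisely what lets the translation correction stay inside $\omega$, and it is what makes the interplay of Carathéodory, smooth path-connectedness, and translation invariance yield an \emph{exact} (not merely approximate) average equal to $p$.
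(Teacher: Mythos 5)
Your proof is correct and follows essentially the same approach as the paper: express $p$ as a finite convex combination of points of $\omega$, build a smooth loop that parks at each point for roughly the right fraction of time and transits quickly between them, then fix the small error in the average by a translation small enough to stay inside $\omega$. The explicit appeal to Carath\'eodory is a harmless refinement (the paper only needs some finite convex combination), and your quantification of the slack via $\delta = \dist(K, \r^n\setminus\omega)$ makes precise the paper's choice of a relatively compact $\omega'\Subset\omega$.
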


The proof of Lemma \ref{lem:convex} is quite simple: write $p=\sum_{j=1}^k c_j p_j$ where 
$p_j\in \omega$ and $c_j>0$ with $\sum_{j=1}^k c_j=1$.
Pick a smaller connected bounded open set $\omega'\Subset \omega$ which contains the 
points $p_1,\ldots, p_k$. Choose a smooth path $g\colon\t\to\omega'$ such that it spends almost 
the time $c_j$ at the point $p_j$ and goes very quickly from one point to the next 
in the mean time. This gives a loop in $\omega'$ whose integral is as close as desired to $p$; 
by a small translation of $g$ we can  ensure that it lies in $\omega$ and that its integral
equals $p$. 

Since every smooth map $\t\to\r^n$ is the boundary value of a harmonic map
$g:\cd\to \r^n$ and we have $\int_0^{2\pi} g(e^{\imath t}) \frac{dt}{2\pi}=g(0)$, 
we immediately get the following corollary.

\begin{corollary}\label{cor:convex}  
Let $\omega$ and $p$ be as in Lemma \ref{lem:convex}. Then 
there exists an analytic disc $f\colon \cd\to\c^n$ such that 
$f(0)=p$ and $f(\t)\subset \omega\times\imath\r^n$.
\end{corollary}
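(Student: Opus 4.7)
The plan is to follow the hint offered immediately before the statement: combine Lemma \ref{lem:convex} with the classical construction of a harmonic conjugate. The output of Lemma \ref{lem:convex} is a smooth loop in $\omega\subset\r^n$ whose integral average equals $p$; this loop, harmonically extended into the disc, will become the real part $\Re f$, and we will supply the imaginary part as a harmonic conjugate (which is allowed to take any real values, since the tube $\Tcal_\omega=\omega\times\imath\r^n$ imposes no restriction on the imaginary part). The tube structure of the target is precisely what makes this construction succeed.

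First I would apply Lemma \ref{lem:convex} to obtain a smooth loop $g\colon\t\to\omega$ with
\[
   \int_0^{2\pi} g(\E^{\imath t})\,\frac{dt}{2\pi} = p.
\]
Let $G\colon\cd\to\r^n$ be the componentwise Poisson extension of $g$; since $g$ is smooth, $G$ is smooth on $\cd$ and harmonic on $\d$, and by the mean value property $G(0)=p$. On the simply connected disc $\d$, each real-valued harmonic component $G_j$ admits a real harmonic conjugate $H_j\colon\d\to\r$, unique up to an additive constant; normalize by $H_j(0)=0$. Setting $H=(H_1,\ldots,H_n)$, the Cauchy--Riemann equations show that $f:=G+\imath H\colon\cd\to\c^n$ is holomorphic on $\d$. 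Smoothness of $g$ on $\t$ passes (via the Fourier series of $G$ on $\cd$, or equivalently via the boundedness of the Hilbert transform on smooth functions) to smoothness of $H$ up to $\t$, so $f$ is continuous on $\cd$ and is therefore an analytic disc in the intended sense.

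It remains to verify the two properties. At the center, $f(0)=G(0)+\imath H(0)=p+\imath 0=p$. For $\zeta=\E^{\imath t}\in\t$ we have $\Re f(\zeta)=G(\zeta)=g(\zeta)\in\omega$ and $\Im f(\zeta)=H(\zeta)\in\r^n$, so
\[
   f(\zeta)\in \omega\times\imath\r^n = \Tcal_\omega,
\]
which completes the argument. There is no genuine obstacle in this proof: the only slightly nontrivial input beyond Lemma \ref{lem:convex} is the existence of a (smooth) harmonic conjugate of each $G_j$ on the simply connected disc, which is entirely standard. The essence of the corollary is that in a tube domain the imaginary part of a boundary map is unconstrained, so the harmonic-conjugate construction suffices to upgrade a harmonic disc to a holomorphic one.
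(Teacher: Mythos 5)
Your proof is correct and takes essentially the same route as the paper: apply Lemma \ref{lem:convex} to get a smooth loop in $\omega$ averaging to $p$, take the Poisson (harmonic) extension so that the center value is $p$, and then complete it with a harmonic conjugate to obtain a holomorphic disc whose real boundary values remain in $\omega$. The paper states this in two lines without spelling out the harmonic-conjugate step, but the content is identical.
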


Assume now that $K$ is a compact connected set  in $\r^n$ and $p\in \Co(K)$.
For each $j\in\n$ let $\omega_k=\{x\in \r^n\colon \dist(x,K)<1/j\}$.
Corollary \ref{cor:convex} furnishes a holomorphic disc $f_j\colon \cd\to\c^n$ 
with $f_j(0)=p$ and $f_j(\t)\subset \omega_j \times \imath\r^n$.
The sequence of real parts $g_j=\Re f_j \colon \cd\to\omega_j$ is uniformly bounded, and
hence bounded in $L^2(\t)$. Since the harmonic conjugate transform is a bounded operator on $L^2(\t)$ 
(see \cite[Theorem 3.1, p.\ 116]{Garnett}), we have $\int_\t |f_j|^2 d\sigma \le C < \infty$
for some constant $C$ and all $j\in\n$. Let $T_j=(f_j)_* G$,  a positive $(1,1)$ current on $\c^n$.
Lemma \ref{lem:mass} implies that $\M(T_j)\le C/4<\infty$ for all $j$.
We now proceed as in Theorem \ref{th:minimalcurrents}.
Passing  to a  subsequence we may assume that currents $T_j$ converge weakly to a positive 
$(1,1)$-current $T$ on $\c^n$ with finite mass whose support lies over a bounded subset of $\r^n$, 
and the measures $\sigma_j=(f_j)_*\sigma$ converge weakly 
to a probability measure $\mu$ supported on $K\times\imath\r^n$.
(No mass is lost when passing to the limit since the sequence $f_j$ is  
bounded in $L^2(\t)$.) By (\ref{eq:ddc}) we have that $dd^c T_j =(f_j)_*\sigma -\delta_p$ for all 
$j\in \n$, and hence we get $dd^c T=\mu-\delta_p$.

The last claim in Theorem \ref{th:Bochner} is proved as in Proposition \ref{prop:support}-(b),
using the  fact for any convex function $u$ on $\r^n$ the function $u\circ\pi$ is plurisubharmonic
on $\c^n$.
\end{proof}

\bigskip
\centerline{ADDED IN PROOF}

In the recent preprint "Minimal surfaces in minimally convex domains" by A.\ Alarc\'on, B. Drinovec Drnov\v sek, F.\ Forstneri\v c 
and F.\ J.\ L\'opez, (http://arxiv.org/abs/1510.04006), it is shown that the main results of the present paper hold in any 
dimension $n\ge 3$. 


\bigskip

\centerline{ACKNOWLEDGEMENTS}

B.\ Drinovec Drnov\v sek and F.\ Forstneri\v c are supported in part by the research program P1-0291 
and the grant J1-5432 from ARRS, Republic of Slovenia. 

The authors wish to thank Antonio Alarc\'{o}n and Francisco J.\ L\'opez for 
helpful discussions regarding minimal surfaces and the notion of the minimal hull, 
Roman Drnov\v sek for his advice concerning sequential compactness in weak* topologies,
and Reese Harvey and Blaine Lawson for their remarks on an earlier version of the paper.
We also thank the referee for having pointed out the connection with universal domains
for minimal surfaces and with mean-convex domains (cf.\ Remark \ref{rem:mean-convex} above).


\vskip 5mm

\noindent Barbara Drinovec Drnov\v sek

\noindent Faculty of Mathematics and Physics, University of Ljubljana, and Institute
of Mathematics, Physics and Mechanics, Jadranska 19, SI--1000 Ljubljana, Slovenia.

\noindent e-mail: {\tt barbara.drinovec@fmf.uni-lj.si}

\vskip 5mm
\noindent Franc Forstneri\v c

\noindent Faculty of Mathematics and Physics, University of Ljubljana, and Institute
of Mathematics, Physics and Mechanics, Jadranska 19, SI--1000 Ljubljana, Slovenia.

\noindent e-mail: {\tt franc.forstneric@fmf.uni-lj.si}

\end{document}